\DeclareMathOperator{\dist}{dist}
\DeclareMathOperator{\spt}{supp}
\DeclareMathOperator{\dive}{div}
\def\loc{{\mathrm{loc}}}
\newcommand{\eps}{\varepsilon}
\newcommand{\Chi}{\mathcal{X}}
\newcommand{\R}{\ensuremath{\mathbb{R}}}
\newcommand{\Rn}{\ensuremath{{\mathbb{R}^n}}}
\newcommand{\nablatx}{\ensuremath{{\nabla^\prime}}}
\newcommand{\N}{\ensuremath{\mathbb{N}}}
\newcommand{\LL}{\ensuremath{\mathcal{L}}}
\newcommand{\Ha}{\ensuremath{\mathcal{H}}}
\newcommand{\proj}{\Pi}
\def\R{\mathbb R}
\def\W{\mathcal{W}}
\def\A{\mathcal{S}}
\def\AK{\mathcal{S}^{0}}
\def\Laction{\Lambda_1}
\def\Linit{\Lambda_2}
\def\Lsquares{\Lambda_3}
\def\Lenergy{\Lambda_4}
\def\B{\mathcal{B}}
\def\Per{\mathcal{P}}
\def\Prob{\mathsf{Prob}}
\theoremstyle{plain}
\numberwithin{equation}{section}
\newtheorem{lemma}{Lemma}[section]
\newtheorem{theorem}[lemma]{Theorem}
\newtheorem{proposition}[lemma]{Proposition}
\newtheorem{definition}[lemma]{Definition}
\newtheorem{assumption}[lemma]{Assumption}
\theoremstyle{definition}
\newtheorem{remark}[lemma]{Remark}
\begin{document}
\title{The Allen--Cahn Action functional in higher dimensions}
\author{Luca Mugnai}
\address{Luca Mugnai, Max Planck Institute for Mathematics in the
  Sciences, Inselstr. 22, D-04103 Leipzig}

\author{Matthias R{\"o}ger}
\address{Matthias R\"{o}ger, Max Planck Institute for Mathematics in the
  Sciences, Inselstr. 22, D-04103 Leipzig}

\email{mugnai@mis.mpg.de, roeger@mis.mpg.de}

\subjclass[2000]{Primary 49J45; Secondary 35R60, 60F10, 53C44}

\keywords{Allen-Cahn equation, stochastic partial differential equations, large
deviation theory, sharp interface limits, motion by mean curvature}

\date{\today}

\begin{abstract}
The Allen--Cahn action functional is related to the probability of rare
events in the stochastically perturbed Allen--Cahn equation. Formal
calculations suggest a \emph{reduced action functional} in the sharp
interface limit. We prove the
corresponding lower bound in two and three space dimensions. 
One difficulty is that diffuse interfaces may
collapse in the limit. We therefore consider the limit of diffuse
surface area measures and introduce a generalized velocity and
generalized reduced action functional in a class of evolving
measures.
\end{abstract}

\maketitle
%\tableofcontents
%==========================================
\section{Introduction}
%==========================================
\label{sec:intro}
In this paper we study the (renormalized) \emph{Allen--Cahn
  action functional}
\begin{gather}
 \A_\eps(u)\,:=\, \int_0^T\int_\Omega \Big(\sqrt{\eps}\partial_t u
  +\frac{1}{\sqrt{\eps}}\big(-\eps\Delta u 
  +\frac{1}{\eps}W^\prime(u)\big)\Big)^2\,dx\,dt. \label{def:action}
\end{gather}
This functional arises in the analysis of the stochastically
perturbed Allen--Cahn equation
\cite{ALan03,KKLa07,FHSv04,Shar00,ERVa04,FJon82,Feng06}  and is related to the
probability of rare 
events such as switching between
deterministically stable states.

Compared to the purely deterministic setting, stochastic perturbations add
new features to the theory of phase separations, and the analysis of 
action functionals has drawn attention 
\cite{ERVa04,FHSv04,KORV,KRT,RTon07}. Kohn \emph{et alii} \cite{KORV}
considered the \emph{sharp-interface limit}
$\eps\to 0$ of $\A_\eps$ and identified a \emph{reduced action
  functional} that is more easily
accessible for a qualitative analysis. The sharp
interface limit reveals a connection between minimizers of $\A_\eps$ and
mean curvature flow. 

The reduced action functional in  \cite{KORV} is defined for
phase indicator functions $u:(0,T)\times\Omega\to\{-1,1\}$ with the
additional properties that the measure of the phase $\{u(t,\cdot)\,=\,1\}$ is continuous and
 the common boundary of the two phases $\{u=1\}$
and $\{u=-1\}$ is, apart from a countable set of singular times, given
as union of smoothly evolving 
hypersurfaces $\Sigma:= \cup_{t\in (0,T)}\{t\}\times\Sigma_t$. The
reduced action functional is then defined as
\begin{align}
  \AK(u)\,&:=\, c_0\int_0^T\int_{\Sigma_t} \big|v(t,x) -
  H(t,x)\big|^2 \,d\Ha^{n-1}(x)dt\, + 4\AK_{nuc}(u),
  \label{def:red-action} \\
  \AK_{nuc}(u)\,&:=\, 2c_0 \sum_{i}\Ha^{n-1}(\Sigma_i), \label{def:AK-nuc}
\end{align}
where $\Sigma_i$ denotes the $i^{th}$ component of $\Sigma$ at
the time of creation, where $v$ denotes the normal velocity of
the evolution $(\Sigma_t)_{t\in (0,T)}$, where $H(t,\cdot)$ denotes the
mean curvature vector of $\Sigma_t$, and where the constant $c_0$ is determined
by $W$, 
\begin{gather}\label{eq:def-c0}
  c_0 \,:=\, \, \int_{-1}^1 \sqrt{2W(s)}\,ds.
\end{gather}
(See Section \ref{sec:discussion} for a more rigourous definition of $\AK$).

Several arguments suggest that $\AK$ describes the
Gamma-limit of $\A_\eps$: 
\begin{itemize}
\item
The \emph{upper bound} necessary for the Gamma-convergence was
formally proved \cite{KORV} by the construction of good `recovery
sequences'. 
\item
The \emph{lower bound} was proved in \cite{KORV} for sequences
$(u_\eps)_{\eps>0}$ such that the 
associated `energy-measures' have \emph{equipartitioned energy} and
\emph{single multiplicity} as $\eps\to 0$.
\item
In one space-dimension Reznikoff and Tonegawa \cite{RTon07} proved that
$\A_\eps$ Gamma-converges to an appropriate relaxation 
of the one-dimensional version of $\AK$.
\end{itemize}
The approach used in \cite{KORV} is based on
the evolution of the phases and is sensible to
cancellations of phase boundaries in the sharp interface
limit. Therefore in \cite{KORV} a sharp lower bound is  achieved 
only under a single-multiplicity assumption for
the limit of the diffuse interfaces. As a consequence,
it could not be excluded that creating multiple interfaces
reduces the action.

In the present paper we prove a sharp lower-bound of the functional
$\A_\eps$ in space dimensions $n=2,3$ without any additional
restrictions on the approximate sequences.

To circumvent
problems with cancellations of interfaces we analyze the
evolution of the (diffuse) \emph{surface-area measures}, which makes
information available that is lost in the limit of phase
fields.
With this aim we generalize the functional $\AK$
to a suitable class of \emph{evolving energy measures} and introduce a
generalized formulation of velocity, similar to Brakke's
generalization of Mean Curvature Flow \cite{Brak78}.

Let us informally describe our approach and main results. Comparing the
two functionals $\A_\eps$ and $\AK$ the first and second term of the sum
in the integrand \eqref{def:action} describe a `diffuse velocity' and
`diffuse mean curvature' respectively. We will make this statement
precise in \eqref{eq:conv-pair} and \eqref{eq:conv-H-eps}. The mean
curvature is given by the first variation of the area functional, and a
lower estimate for the square of the diffuse mean curvature is available
in a time-independent situation \cite{RS}. The velocity of the
evolution of the phase boundaries is determined by the time-derivative
of the surface-area measures and the nucleation term in the functional
$\AK$ in fact describes a singular part of this time derivative.

Our first main result  is a compactness
result: the diffuse surface-area measures converge to an evolution of
measures with a square integrable generalized mean curvature and a
square integrable generalized velocity . In the class of such evolutions
of measures we provide a 
generalized formulation of the reduced action functional. We
prove a lower estimate that counts the propagation cost
with the multiplicity of the interface. This shows
that it is more expensive to move phase boundaries with higher
multiplicity. Finally we prove two statements on the Gamma-convergence
(with respect to $L^1(\Omega_T)$) of the action functional. The first result is
for evolutions in the domain of $\AK$ that have nucleations only at
the initial time. This is in particular desirable since minimizers of $\AK$
are supposed to be in this class. The second result proves the Gamma
convergence in $L^1(\Omega_T)$ under an assumption on the 
structure of the set of measures arising as sharp interface limits of
sequences with uniformly bounded action.

We give a precise statement of our main results in Section
\ref{sec:results}. In the remainder of this introduction we describe
some background and motivation.

\subsection{Deterministic phase field models and sharp interface
limits} 
Most \emph{diffuse interface models} are based on the 
\emph{Van~der~Waals--Cahn--Hilliard} energy
\begin{gather}
  E_{\varepsilon}(u)\,:=\,\int_{\Omega}\Big(\frac{\varepsilon}{2}|\nabla u|^2 
  +\frac{1}{\varepsilon}W(u)\Big)\, dx. \label{eq:def-E-intro}
\end{gather}
The energy $E_\eps$
favors a decomposition of $\Omega$ into two regions (phases) where
$u\approx -1$ and $ u\approx 1$, separated by a transition layer (diffuse
interface) with a thickness of order $\eps$. Modica and Mortola
\cite{MM,Mo} proved that $E_\eps$  
Gamma-converges (with respect to $L^1$-convergence) to a constant
multiple of the perimeter functional $\Per$, restricted to phase indicator
functions,
\begin{gather*}
  E_\eps\,\to\, c_0\Per,\qquad
  \Per(u)\,:=\, 
  \begin{cases}
    \frac{1}{2}\int_\Omega \,d|\nabla u| &\text{ if } u\in BV(\Omega,\{-1,1\}),\\
    \infty &\text{ otherwise.}
  \end{cases}
\end{gather*}
$\Per$ measures the surface-area of the phase boundary
$\partial^*\{u=1\}\cap\Omega$. In this sense $E_\eps$ describes a diffuse
approximation of the surface-area functional.

Various tighter connections between the functionals $E_\eps$ and $\Per$
have been proved. We mention here just two that are important for our
analysis. The (accelerated) $L^2$-gradient flow of $E_\eps$ is
given by the \emph{Allen--Cahn equation}
\begin{gather}
  \eps \partial_t u \,=\, \eps\Delta u -\frac{1}{\eps}W'(u)
  \label{eq:AC}
\end{gather}
for phase fields in the time-space cylinder $(0,T)\times\Omega$. It is
proved in different formulations \cite{dMSc90,ESSo92,Ilma93} that 
\eqref{eq:AC} converges to the \emph{Mean Curvature Flow}
\begin{gather}
  H(t,\cdot)\,=\, v(t,\cdot) \label{eq:MCF}
\end{gather}
for the evolution of phase boundaries.

Another connection between the first variations of $E_\eps$ and $\Per$ is
expressed in a (modified) conjecture of De Giorgi \cite{DG}: Considering
\begin{gather}
  \W_\eps(u)\,:=\,\int_\Omega \frac{1}{\eps}\Big(-
  \eps\Delta u +\frac{1}{\eps}W'(u)\Big)^2\,dx \label{eq:def-W-intro}
\end{gather}
the sum $E_\eps+\W_\eps$ Gamma-converges up to the constant factor $c_0$ 
to the sum of the Perimeter functional and the \emph{Willmore functional} $\W$,
\begin{gather}
  E_\eps + \W_\eps\,\to\, c_0\Per + c_0\W,\qquad
  \W(u)\,=\, \int_{\Gamma} H^2\,d\Ha^{n-1}, \label{eq:def-willmore}
\end{gather}
where $\Gamma$ denotes the phase boundary $\partial^*\{u=1\}\cap\Omega$.
This statement was recently proved by R\"oger and Sch\"atzle \cite{RS}
in space dimensions $n=2,3$ and is one essential ingredient to obtain
the lower bound for the action functional.

\subsection{Stochastic interpretation of the action functional}
Phenomena such as the nucleation of a
new phase or the switching between two (local) energy minima require an
energy barrier crossing and are out of the scope of deterministic models
that are energy dissipative.
If thermal fluctuations are taken into account such an
energy barrier crossing becomes possible.
In \cite{KORV} `thermally activated switching' was considered
for the \emph{stochastically
perturbed Allen--Cahn equation}
\begin{gather}
  \eps \partial_t u \,=\, \eps\Delta u -\frac{1}{\eps}W'(u) +
  \sqrt{2\gamma} \eta_\lambda
  \label{eq:stoch-AC}
\end{gather}
Here $\gamma>0$ is a parameter that represents the temperature of the
system, $\eta$ is a time-space white noise, and $\eta_\lambda$ is a spatial
regularization with $\eta_\lambda\to \eta$ as $\lambda\to 0$. This
regularization is necessary for $n\geq 2$ since the white noise is too
singular to ensure well-posedness of \eqref{eq:stoch-AC} in higher
space-dimensions.

Large deviation theory and (extensions of) results by
Wentzell and Freidlin \cite{FWen98,WFre79} yield an estimate on the
probability distribution of solutions of stochastic ODEs and PDEs in the
small-noise limit. This estimate is expressed in terms of a
(deterministic) action functional. For instance, 
thermally activated switching within a time $T>0$ is described by the
set of paths 
\begin{gather}
  \B\,:=\, \Big\{u(0,\cdot)\,=\, -1,\quad
  \|u(t,\cdot)-1\|_{L^\infty(\Omega)}\,\leq\,\delta\text{ for some
  }t\leq T \Big\}, \label{eq:prob-switch}
\end{gather}
where $\delta>0$ is a fixed constant.
The probability of switching for solutions of \eqref{eq:stoch-AC} then
satisfies 
\begin{gather}
  \lim_{\gamma\to 0} \gamma \ln \Prob(\B)\,=\, -\inf_{u\in\B}
  \A_\eps^{(\lambda)}(u). \label{eq:est-action-lambda}
\end{gather}
Here $\A_\eps^{(\lambda)}$ is the action functional associated to
\eqref{eq:stoch-AC} and converges (formally) to the action functional
$\A_\eps$ as $\lambda\to 0$ \cite{KORV}.
Large deviation theory not only estimates the probability
of rare events but also identifies the `most-likely switching path'  as
the minimizer $u$ in \eqref{eq:est-action-lambda}.

We focus here on the sharp interface limit $\eps\to 0$ of the
action functional $\A_\eps$. The small parameter $\eps>0$ corresponds to
a specific diffusive scaling of the time- and space domains. This choice
was identified \cite{ERVa04,KORV} as particularly 
interesting,  exhibiting a competition between \emph{nucleation versus
  propagation} to achieve the optimal switching. Depending on the value of
$|\Omega|^{1/d}/\sqrt{T}$ a cascade of more and more complex spatial patterns is
observed \cite{ERVa04,KORV,KRT}. The interest in the sharp interface 
limit is motivated by an interest in applications
where the switching time is small compared to the deterministic
time-scale, see for instance \cite{KRVa05}.

\subsection{Organization}
We fix some notation and assumptions in the next
section. In Section \ref{sec:L2-flows} we introduce the concept of
$L^2$-flows and generalized velocity. Our main results are stated in Section
\ref{sec:results} and proved in the Sections
\ref{sec:proofs-prelim}-\ref{sec:proofs-velo}. We discuss some
implications for the Gamma-convergence of the
action functional in Section \ref{sec:discussion}. Finally, in the Appendix we
 collect some definitions from Geometric Measure Theory.
%=================================
\subsection*{Acknowledgment}
We wish to thank Maria Reznikoff, Yoshihiro Tonegawa, and Stephan Luckhaus for
several stimulating discussions. The first author thanks the Eindhoven
University of Technology for its hospitality during his stay in
summer 2006.

The first author was partially supported by the
\emph{Schwerpunktprogramm DFG SPP 1095 `Multiscale Problems'} and
\emph{DFG Forschergruppe 718}. 
%===================================================
\section{Notation and Assumptions}
%==========================================
\label{sec:nota}
Throughout the paper we will adopt the following notation: $\Omega$ is an
open bounded subset of $\R^n$ with Lipschitz boundary; $T>0$ is a real number and
$\Omega_T:=(0,T)\times\Omega$; $x\in\Omega$ and 
$t\in (0,T)$ denote the space- and time-variables respectively; $\nabla$
and $\Delta$ denote the spatial gradient and Laplacian and $\nablatx$
the full gradient in 
$\R\times\Rn$.

We choose $W$ to be the standard quartic double-well potential
\begin{gather*}
  W(r)\,=\,\frac{1}{4}(1-r^2)^2.
\end{gather*}

For a family of measures $(\mu^t)_{t\in (0,T)}$ we denote by $\LL^1\otimes
\mu^t$ the product measure defined by
\begin{gather*}
  \big(\LL^1\otimes \mu^t\big)(\eta)\,:=\, \int_0^T \mu^t(\eta(t,\cdot))\,dt 
\end{gather*}
for any $\eta\in C^0_c(\Omega_T)$.

We next state our main assumptions.
\begin{assumption}\label{ass:main}
Let $n=2,3$ and let a sequence $(u_\eps)_{\eps>0}$ of smooth functions
be given that satisfies for all $\eps>0$ 
\begin{gather}
  \A_\eps(u_\eps) \,\leq\,
  \Laction, \tag{A1} \label{ass:bound}\\
  \int_{\Omega} \big(
  \frac{\eps}{2}|\nabla u_\eps|^2
  +\frac{1}{\eps}W(u_\eps)\big)(0,x)\,dx
  \,\leq\, \Linit, \tag {A2} \label{ass:init}
\end{gather}
where the constants $\Laction, \Linit$ are independent
of $\eps>0$. Moreover we prescribe that
\begin{gather}
  \nabla u_\eps\cdot\nu_\Omega\,=\,0\quad\text{ on
  }[0,T]\times\partial\Omega. \tag{A3} \label{eq:neumann}
\end{gather}
\end{assumption}

\begin{remark}\label{rem:ass-KRT}
It follows from \eqref{eq:neumann} that for any $0\leq t_0\leq T$
\begin{align*}
  &\int_0^{t_0}\int_\Omega \Big(\sqrt{\eps}\partial_t u_\eps
  +\frac{1}{\sqrt{\eps}}\big(-\eps\Delta u_\eps 
  +\frac{1}{\eps}W^\prime(u_\eps)\big)\Big)^2\,dxdt\\
  =\,&
  \int_0^{t_0}\int_{\Omega} \eps (\partial_t u_\eps)^2 +\frac{1}{\eps} \big(-\eps\Delta u_\eps +
  \frac{1}{\eps}W^\prime(u_\eps)\big)^2\,dxdt\\
  &+ 2\int_{\Omega} \big(
  \frac{\eps}{2}|\nabla u_\eps|^2
  +\frac{1}{\eps}W(u_\eps)\big)(t_0,x)\,dx - 2\int_{\Omega} \big(
  \frac{\eps}{2}|\nabla u_\eps|^2
  +\frac{1}{\eps}W(u_\eps)\big)(0,x)\,dx.
\end{align*}
By the uniform bounds \eqref{ass:bound}, \eqref{ass:init} this implies that
\begin{gather}
  \int_{\Omega_T} \eps (\partial_t u_\eps)^2 +\frac{1}{\eps} \big(-\eps\Delta u_\eps +
  \frac{1}{\eps}W^\prime(u_\eps)\big)^2\,dxdt\,\leq\, \Lsquares,
  \label{ass:bound-squares}\\ 
  \max_{0\leq t\leq T}\int_{\Omega} \big(
  \frac{\eps}{2}|\nabla u_\eps|^2
  +\frac{1}{\eps}W(u_\eps)\big)(t,x)\,dx
  \,\leq\, \Lenergy,
  \label{ass:bound-mu}
\end{gather}
where
\begin{gather*}
  \Lsquares\,:=\, \Laction + 2\Linit ,\qquad
  \Lenergy\,:=\,\frac{1}{2}\Laction + \Linit.
\end{gather*}
\end{remark}
\begin{remark}
Our arguments would also work for any boundary conditions for which
$\partial_t u \nabla u\cdot\nu_\Omega$ vanishes on $\partial\Omega$,
in particular for time-independent Dirichlet conditions or periodic
boundary conditions. 
\end{remark}

We set
\begin{gather}
  w_\eps\,:=\, -\eps\Delta u_\eps +
  \frac{1}{\eps}W^\prime(u_\eps) \label{eq:def-w}
\end{gather}
and define for $\eps>0$, $t\in (0,T)$ a Radon measure $\mu_\eps^t$ on
$\overline{\Omega}$, 
\begin{align}
  \mu_\eps^t \,&:=\, \Big(\frac{\eps}{2}|\nabla u_\eps|^2(t,\cdot)
  +\frac{1}{\eps}W(u_\eps(t,\cdot))\Big)\LL^n, \label{eq:def-mu-eps,t}
\end{align}
and for $\eps>0$ measures $\mu_\eps,\alpha_\eps$ on $\overline{\Omega_T}$,
\begin{align}
  \mu_\eps \,&:=\, \Big(\frac{\eps}{2}|\nabla u_\eps|^2
  +\frac{1}{\eps}W(u_\eps)\Big)\LL^{n+1}, \label{eq:def-mu-eps}\\ 
  \alpha_\eps\,&:=\, \big(\eps^{1/2}\partial_t
  u_\eps+\eps^{-1/2}w_\eps\big)^2 \LL^{n+1}. \label{eq:def-alpha-eps}
\end{align}

Eventually restricting ourselves to a subsequence $\eps\to 0$ we may
assume that
\begin{alignat}{2}
  \mu_\eps\,&\to\, \mu \quad&&\text{ as Radon-measures on
  }{\overline{\Omega_T}}, 
  \label{eq:conv-mu}\\ 
  \alpha_\eps\,&\to\, \alpha &&\text{ as Radon-measures on
  }{\overline{\Omega_T}}, 
  \label{eq:conv-alpha}
\end{alignat}
for two Radon measures $\mu, \alpha$ on $\overline{\Omega_T}$,
and that
\begin{gather}
  \alpha(\overline{\Omega_T})\,=\, \liminf_{\eps\to
  0}\alpha_\eps(\Omega_T). \label{eq:ass-liminf} 
\end{gather}
%===================================================
% L2 flows
%===================================================
\section{$L^2$-flows}
\label{sec:L2-flows}
We will show that the uniform bound on the action implies the existence 
of a square-integrable weak mean curvature and the existence of a
square-integrable \emph{generalized velocity}. 
The formulation of weak mean curvature is standard in Geometric Measure
Theory \cite{Alla72,Si}. Our definition of $L^2$-flow and generalized
velocity is similar to Brakke's formulation of mean curvature flow
\cite{Brak78}. 
\begin{definition}
\label{def:gen-velo}
Let $(\mu^t)_{t\in (0,T)}$ be any family of integer rectifiable Radon measures
such that $\mu:=\LL^1\otimes\mu^t$ defines a Radon measure on $\Omega_T$ and
such that $\mu^t$ has a weak mean curvature $H(t,\cdot)\in L^2(\mu^t)$
for almost all $t\in (0,T)$.

If there exists a positive
constant $C$ and 
a vector field $v\in L^2(\mu,\Rn)$ such that
\begin{gather}
  v(t,x)\perp T_x\mu^t\quad\text{ for }\mu\text{-almost all
  }(t,x)\in\Omega_T, \label{eq:def_vel_perp}\\
  \label{eq:def_vel_L2_flow}
  \Big\vert \int_0^T\int_\Omega \big(\partial_t\eta+
  \nabla\eta\cdot v\big)\,d\mu^tdt \Big\vert \,\leq\,
  C\|\eta\|_{C^0(\Omega_T)}
\end{gather}
for all $\eta\in C^1_c((0,T)\times\overline{\Omega})$, then we call  the
evolution 
$(\mu^t)_{t\in (0,T)}$ an \emph{$L^2$-flow}. A function $v\in
L^2(\mu,\Rn)$ satisfying \eqref{eq:def_vel_perp},
\eqref{eq:def_vel_L2_flow} is called a 
\emph{generalized velocity vector}.
\end{definition}
This definition is based on the observation that for a smooth evolution
$(M_t)_{t\in (0,T)}$ with mean curvature $H(t,\cdot)$ and normal
velocity vector $V(t,\cdot)$
\begin{align*}
  &\frac{d}{dt}\int_{M_t}\eta(t,x)\,d\Ha^{n-1}(x) -
  \int_{M_t}\partial_t\eta(t,x)\,d\Ha^{n-1}(x)  -
  \int_{M_t}\nabla\eta(t,x)\cdot V(t,x)\,d\Ha^{n-1}(x) \\
  =\,&  \int_{M_t}H(t,x)\cdot V(t,x)\eta(t,x)\,d\Ha^{n-1}(x).
\end{align*}
Integrating this equality in time implies \eqref{eq:def_vel_L2_flow}  for
any evolution with 
square-integrable velocity and mean curvature. 
\begin{remark}
\label{rem:singular-times}
Choosing $\eta(t,x)=\zeta(t)\psi(x)$ with $\zeta\in C^1_c(0,T)$,
$\psi\in C^1(\overline{\Omega})$, we deduce from \eqref{eq:def_vel_L2_flow}
that $t\mapsto \mu^t(\psi)$ belongs to $BV(0,T)$. Choosing a countable
dense subset $(\psi_i)_{i\in\N}\subset C^0(\overline{\Omega})$ this implies
that there exists a countable set $S\subset (0,T)$ of \emph{singular
times} such that any good representative of $t\mapsto\mu^t(\psi)$ is
continuous in $(0,T)\setminus S$ for all $\psi\in C^1(\overline{\Omega})$.
\end{remark}  

Any
generalized velocity is (in a set of good points) uniquely determined by
the evolution $(\mu_t)_{t\in (0,T)}$.  
\begin{proposition}
\label{prop:uni_gen_vel_perp} 
Let $(\mu^t)_{t\in(0,T)}$ be an $L^2$-flow and set
$\mu:=\LL^1\otimes\mu^t$. Let $v\in L^2(\mu)$ be a generalized velocity
field in the sense of Definition \ref{def:gen-velo}. 
Then 
\begin{equation}\label{eq:gen_v_perp}
  \begin{pmatrix} 1 \\ v(t_0,x_0)\end{pmatrix} \in T_{(t_0,x_0)}\mu
\end{equation}
holds in $\mu$-almost all points $(t_0,x_0)\in\Omega_T$ where the
tangential plane of $\mu$ exists. The evolution
$(\mu^t)_{t\in(0,T)}$ uniquely determines $v$ in all points
$(t_0,x_0)\in \Omega_T$ where both tangential planes
$T_{(t_0,x_0)}\mu$ and $T_{x_0}\mu^{t_0}$ exist.
\end{proposition}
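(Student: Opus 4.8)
The plan is to derive both assertions from the single claim that
\[
  \big(1,v(t_0,x_0)\big)\in T_{(t_0,x_0)}\mu
  \qquad\text{for $\mu$-a.e.\ $(t_0,x_0)\in\Omega_T$ at which $T_{(t_0,x_0)}\mu$ exists.}
\]
Granting this, \eqref{eq:gen_v_perp} is immediate; moreover $T_{(t_0,x_0)}\mu$ is then $\mu$-a.e.\ non-horizontal (its first coordinate is nonzero), so by the slicing theory for rectifiable sets (Appendix) one has $T_{(t_0,x_0)}\mu\cap(\{0\}\times\Rn)=\{0\}\times T_{x_0}\mu^{t_0}$ wherever both tangent planes exist. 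Hence, if $v_1,v_2$ are two generalized velocities, then $\mu$-a.e.\ $(0,v_1-v_2)\in T_{(t_0,x_0)}\mu\cap(\{0\}\times\Rn)=\{0\}\times T_{x_0}\mu^{t_0}$, while $v_1,v_2\perp T_{x_0}\mu^{t_0}$ by \eqref{eq:def_vel_perp}, forcing $v_1=v_2$; this is the uniqueness assertion. It remains to prove the claim.

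Since $\mu=\LL^1\otimes\mu^t$, the integral in \eqref{eq:def_vel_L2_flow} equals $\int_{\Omega_T}\nablatx\eta\cdot(1,v)\,d\mu$, so \eqref{eq:def_vel_L2_flow} says that $\eta\mapsto\int_{\Omega_T}\nablatx\eta\cdot(1,v)\,d\mu$ is bounded on $C^0_c(\Omega_T)$; by the Riesz representation theorem there is a finite Radon measure $\beta$ on $\Omega_T$ with $\int_{\Omega_T}\nablatx\eta\cdot(1,v)\,d\mu=\int_{\Omega_T}\eta\,d\beta$ for all $\eta\in C^1_c(\Omega_T)$, and in particular $\big|\int_{\Omega_T}\nablatx\eta\cdot(1,v)\,d\mu\big|\le\|\eta\|_{C^0}\,|\beta|(\spt\eta)$. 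The decisive point is the decay estimate $|\beta|(B_r(p_0))=o(r^{n-1})$ as $r\to0$ for $\mu$-a.e.\ $p_0$: by the standard density bound for the finite measure $|\beta|$, the set where $\limsup_{r\to0}r^{1-n}|\beta|(B_r(\cdot))>0$ is $\sigma$-finite with respect to $\Ha^{n-1}$, hence $\Ha^n$-null, hence $\mu$-null because $\mu$ is $n$-rectifiable. Fix such a point $p_0=(t_0,x_0)$ which in addition is a point of existence of $T_{(t_0,x_0)}\mu$ with positive finite density $\theta_0$ and a Lebesgue point of $v$ with respect to $\mu$; these requirements exclude only a $\mu$-null set.

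Now blow up at $p_0$: for $\tilde\eta\in C^1_c(B_1(0))$ put $\eta(p):=\tilde\eta\big((p-p_0)/r\big)$ (so $\spt\eta\subset B_r(p_0)$) and $\mu_r:=r^{-n}(\Phi_r)_{\#}\mu$ with $\Phi_r(p):=(p-p_0)/r$; a change of variables rewrites the bound above as
\[
  \Big|\int\nablatx\tilde\eta(q)\cdot\big(1,v(p_0+rq)\big)\,d\mu_r(q)\Big|
  \,\le\, r^{1-n}\,\|\tilde\eta\|_{C^0}\,|\beta|\big(B_r(p_0)\big)\,\longrightarrow\,0 .
\]
As $r\to0$ one has $\mu_r\to\theta_0\,\Ha^n\llcorner T_{(t_0,x_0)}\mu$ as Radon measures, $\mu_r(B_1(0))$ stays bounded, and $\int_{B_1(0)}\big|v(p_0+r\,\cdot\,)-v(t_0,x_0)\big|\,d\mu_r\to0$ by the Lebesgue point property; splitting $\big(1,v(p_0+rq)\big)=\big(1,v(t_0,x_0)\big)+\big(0,v(p_0+rq)-v(t_0,x_0)\big)$ and passing to the limit yields
\[
  \int_{T_{(t_0,x_0)}\mu}\nablatx\tilde\eta\cdot\big(1,v(t_0,x_0)\big)\,d\Ha^n=0
  \qquad\text{for all }\tilde\eta\in C^1_c(B_1(0)) .
\]
Write $\big(1,v(t_0,x_0)\big)=\xi^{\top}+\xi^{\perp}$ for the decomposition relative to the plane $T_{(t_0,x_0)}\mu$. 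The tangential part integrates to zero by the divergence theorem on that plane (a constant vector field tested against a compactly supported $\tilde\eta$), so the identity reduces to $\int_{T_{(t_0,x_0)}\mu}\nablatx\tilde\eta\cdot\xi^{\perp}\,d\Ha^n=0$ for all $\tilde\eta$; testing with a product function adapted to the splitting $\R^{n+1}=T_{(t_0,x_0)}\mu\oplus(T_{(t_0,x_0)}\mu)^{\perp}$ forces $\xi^{\perp}=0$, i.e.\ $\big(1,v(t_0,x_0)\big)\in T_{(t_0,x_0)}\mu$. This proves the claim.

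I expect the decay estimate for $\beta$ to be the main obstacle. The bound \eqref{eq:def_vel_L2_flow} carries a global constant and yields nothing directly under the isotropic rescaling — the factor $r^{1-n}$ blows up — so one is forced to pass through the representing measure $\beta$ and exploit a dimension gap: $|\beta|$ may charge sets of Hausdorff dimension as low as $n-1$, whereas $\mu$ is concentrated on the ($\mu$-conull) set where the $(n-1)$-dimensional upper density of $|\beta|$ vanishes, which is exactly what makes $r^{1-n}|\beta|(B_r(p_0))\to0$. A secondary technical ingredient is the identity $T_{(t_0,x_0)}\mu\cap(\{0\}\times\Rn)=\{0\}\times T_{x_0}\mu^{t_0}$, which follows from the standard fact that for a.e.\ $t_0$ the time-slice of the space-time rectifiable set carrying $\mu$ is the set carrying $\mu^{t_0}$, together with the behaviour of approximate tangent planes under transversal slicing.
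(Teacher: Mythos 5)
Your proof follows the same overall strategy as the paper's: introduce the representing Radon measure $\vartheta$ (your $\beta$) for the functional $\eta\mapsto\int\nablatx\eta\cdot(1,v)\,d\mu$, blow up at a generic point $(t_0,x_0)$ where the tangent plane exists and $v$ is approximately continuous, show that the rescaled measure term vanishes in the limit, and read off from the surviving identity $\int_{P_0}\nablatx\tilde\eta\cdot(1,v(t_0,x_0))\,d\Ha^n=0$ that the normal component of $(1,v(t_0,x_0))$ vanishes. The uniqueness argument is also the same linear-algebra observation, resting on $T_{(t_0,x_0)}\mu\cap(\{0\}\times\Rn)=\{0\}\times T_{x_0}\mu^{t_0}$; the paper asserts this slicing identity just as tersely as you do, so you are not worse off there, but neither of you fully justifies it.

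The one genuinely different step is the decay estimate for $\vartheta=\beta$. The paper uses the Radon--Nikodym derivative $D_\mu|\vartheta|$, which is finite $\mu$-a.e., together with the fact that at a tangent point $\limsup_{\varrho\to 0}\varrho^{-n}\mu(Q_\varrho(t_0,x_0))<\infty$, to conclude $\liminf_{\varrho\to 0}\varrho^{-n}|\vartheta|(Q_\varrho)<\infty$ and hence $\liminf_{\varrho\to 0}\varrho^{1-n}|\vartheta|(Q_\varrho)=0$; this $\liminf$ suffices because the other side of the rescaled identity converges. You instead invoke the density bound asserting that $\{\Theta^{*,n-1}(|\beta|,\cdot)>0\}$ is $\sigma$-finite with respect to $\Ha^{n-1}$, hence $\Ha^n$-null, hence $\mu$-null on the set $\Sigma_n(\mu)$ where $\mu$ has finite upper $n$-density; this gives the stronger $\limsup_{r\to 0}r^{1-n}|\beta|(B_r)=0$. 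Both arguments are sound and both ultimately exploit the dimensional gap between the $n$-dimensionality of $\mu$ and the $(n-1)$-rescaling that $\nablatx$ forces. The paper's route keeps everything $\mu$-intrinsic (only Besicovitch differentiation of $|\vartheta|$ against $\mu$), while yours goes through Hausdorff measure comparison and needs, implicitly, the fact that $\mu\llcorner\Sigma_n(\mu)$ is absolutely continuous with respect to $\Ha^n$; in the general $L^2$-flow setting you should say this explicitly (it does hold because the upper $n$-density of $\mu$ is finite on $\Sigma_n(\mu)$, cf.\ the argument in the paper's Proposition on the absolute continuity of $\mu$), since $\mu$ itself is not assumed rectifiable and only its restriction to tangent points is. With that small caveat spelled out, the proposal is correct.
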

We postpone the proof to Section
\ref{sec:proofs-velo}.

In the set of points where a tangential plane of $\mu$ exists, the
 generalized velocity field $v$ coincides with
the normal velocity introduced in \cite{BMug07}. 

We turn now to the statement of a lower bound for sequences
$(u_\eps)_{\eps>0}$ satisfying Assumption \ref{ass:main}. As $\eps\to 0$
we will obtain a phase indicator function $u$ as the limit of the sequence
$(u_\eps)_{\eps>0}$ and an $L^2$-flow $(\mu^t)_{t\in(0,T)}$ as the limit of
the measures $(\mu_\eps)_{\eps>0}$. We will show
that in $\Ha^{n}$-almost all points of the phase boundary
$\partial^*\{u=1\}\cap\Omega$ a tangential plane of $\mu$ exists. This
implies the existence of a unique normal velocity field of the phase
boundary. 
%=====================================
\section{Lower bound for the action functional}\label{sec:results}
%=====================================
In several steps we state a lower bound for the functionals
$\A_\eps$. 
We postpone all proofs to Sections
\ref{sec:proofs-prelim}-\ref{sec:proofs-velo}.  

\subsection{Lower estimate for the mean curvature}
\label{subsec:lsc-mc}
We start with an application of the well-known results of Modica and Mortola 
\cite{MM,Mo}. 
\begin{proposition}\label{prop:MM}
There exists $u\in BV(\Omega_T,\{-1,1\})\cap L^\infty(0,T;BV(\Omega))$
such that for a subsequence $\eps\to 0$
\begin{alignat}{2}
  u_\eps\,&\to\, u&&\text{ in }L^1(\Omega_T),\label{eq:conv-MM}\\
  u_\eps(t,\cdot)\,&\to\, u(t,\cdot)\qquad&&\text{ in }L^1(\Omega)\text{
  for almost all }t\in (0,T). \label{eq:conv-MM-t}
\end{alignat}
Moreover
\begin{align}
  \frac{c_0}{2} \int_{\Omega_T}\,d\vert\nablatx u\vert \,&\leq\,\Lsquares + T\Lenergy, 
  \qquad
  \frac{c_0}{2}\int_\Omega \,d\vert\nabla u(t,\cdot)\vert\,\leq\,\Lenergy\label{eq:est-MM}
\end{align}
holds, where $c_0$ was defined in \eqref{eq:def-c0}.
\end{proposition}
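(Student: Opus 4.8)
The plan is to combine the classical Modica--Mortola lower bound with the uniform energy bounds \eqref{ass:bound-squares}--\eqref{ass:bound-mu} from Remark \ref{rem:ass-KRT}. First I would recall that for any smooth $u_\eps$ one has the pointwise inequality
\[
  \frac{\eps}{2}|\nablatx u_\eps|^2 + \frac{1}{\eps}W(u_\eps)\,\geq\, \sqrt{2W(u_\eps)}\,|\nablatx u_\eps|\,=\,|\nablatx \Phi(u_\eps)|,
\]
where $\Phi(r):=\int_0^r\sqrt{2W(s)}\,ds$, by Young's inequality applied to the full space-time gradient $\nablatx u_\eps$ in $\R\times\Rn$. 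Integrating over $\Omega_T$ and noting that $\frac{\eps}{2}|\nablatx u_\eps|^2=\frac{\eps}{2}|\nabla u_\eps|^2+\frac{\eps}{2}(\partial_t u_\eps)^2$, the left-hand side is controlled by
\[
  \int_{\Omega_T}\Big(\frac{\eps}{2}|\nabla u_\eps|^2+\frac{1}{\eps}W(u_\eps)\Big) + \frac{\eps}{2}\int_{\Omega_T}(\partial_t u_\eps)^2\,\leq\, T\Lenergy + \tfrac12\Lsquares,
\]
using \eqref{ass:bound-mu} for the first term (after integrating the bound $\mu_\eps^t(\Omega)\le\Lenergy$ in time) and \eqref{ass:bound-squares} for the second, since $\eps(\partial_t u_\eps)^2\le\eps(\partial_t u_\eps)^2+\eps^{-1}w_\eps^2$. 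Hence $\|\nablatx\Phi(u_\eps)\|_{L^1(\Omega_T)}$ is uniformly bounded.

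Next I would pass to the limit. The uniform bound on $\mu_\eps(\Omega_T)=\int_{\Omega_T}(\frac{\eps}{2}|\nabla u_\eps|^2+\frac1\eps W(u_\eps))$ together with the $L^1$-bound on $\nablatx\Phi(u_\eps)$ gives, by the standard Modica--Mortola / De Giorgi compactness argument, that $\Phi(u_\eps)$ is bounded in $BV(\Omega_T)$; extracting a subsequence we get $\Phi(u_\eps)\to\Phi(u)$ in $L^1(\Omega_T)$ and a.e., and since $\Phi$ is a homeomorphism of $[-1,1]$ onto $[0,c_0]$ with $\Phi^{-1}$ continuous, this yields $u_\eps\to u$ in $L^1(\Omega_T)$ with $u\in BV(\Omega_T,\{-1,1\})$; the value $\pm1$ forces $W(u)=0$ a.e., and $\Phi(u)=\frac{c_0}{2}(u+1)$ gives $|\nablatx\Phi(u)|=\frac{c_0}{2}|\nablatx u|$. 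Lower semicontinuity of the total variation under $L^1$-convergence then delivers
\[
  \frac{c_0}{2}\int_{\Omega_T}d|\nablatx u|\,=\,|\nablatx\Phi(u)|(\Omega_T)\,\leq\,\liminf_{\eps\to0}\int_{\Omega_T}|\nablatx\Phi(u_\eps)|\,\leq\,\Lsquares + T\Lenergy,
\]
which is the first estimate in \eqref{eq:est-MM}. The membership $u\in L^\infty(0,T;BV(\Omega))$ and the pointwise-in-time bound follow by running the same argument slicewise: for a.e.\ fixed $t$, \eqref{ass:bound-mu} bounds $\int_\Omega(\frac\eps2|\nabla u_\eps|^2+\frac1\eps W(u_\eps))(t,\cdot)$ uniformly, so $\Phi(u_\eps(t,\cdot))$ is bounded in $BV(\Omega)$; combined with the a.e.\ convergence $u_\eps(t,\cdot)\to u(t,\cdot)$ in $L^1(\Omega)$ along a further (diagonal) subsequence, lower semicontinuity gives $\frac{c_0}{2}\int_\Omega d|\nabla u(t,\cdot)|\le\Lenergy$.

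The main technical obstacle is the interplay between the global space-time convergence and the slicewise statements \eqref{eq:conv-MM-t} and the pointwise-in-time perimeter bound: one must argue carefully that a single subsequence can be chosen so that $u_\eps(t,\cdot)\to u(t,\cdot)$ in $L^1(\Omega)$ for a.e.\ $t$ (extract an $L^1(\Omega_T)$-convergent subsequence, which gives a.e.\ convergence on $\Omega_T$, hence by Fubini $u_\eps(t,\cdot)\to u(t,\cdot)$ in measure for a.e.\ $t$, and then a.e.\ in $L^1(\Omega)$ using the uniform $L^\infty$-bound on $u_\eps$), and that the exceptional $t$-set where the slicewise $BV$-bound might fail is null. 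I expect the rest to be routine bookkeeping with Young's inequality, Fatou's lemma, and lower semicontinuity of total variation.
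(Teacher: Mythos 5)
Your proof follows essentially the same route as the paper's: the paper simply observes the space-time energy bound $\int_{\Omega_T}\big(\tfrac{\eps}{2}|\nablatx u_\eps|^2+\tfrac1\eps W(u_\eps)\big)\,dxdt\le\Lsquares+T\Lenergy$, invokes Modica's compactness theorem [Mo] to obtain $u\in BV(\Omega_T,\{-1,1\})$ together with \eqref{eq:conv-MM} and the first bound in \eqref{eq:est-MM}, extracts a further subsequence for \eqref{eq:conv-MM-t}, and then reapplies [Mo] slicewise to get the second bound in \eqref{eq:est-MM}. You unpack the citation to [Mo] (Young's inequality giving $\tfrac{\eps}{2}|\nablatx u_\eps|^2+\tfrac1\eps W(u_\eps)\ge|\nablatx\Phi(u_\eps)|$, then BV compactness and lower semicontinuity), which is a legitimate and in fact slightly sharper route (you obtain $\tfrac12\Lsquares+T\Lenergy$ in place of $\Lsquares+T\Lenergy$); otherwise the structure is identical.

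There are two small inaccuracies worth flagging. First, in your final paragraph you appeal to ``the uniform $L^\infty$-bound on $u_\eps$'' to upgrade convergence in measure to $L^1(\Omega)$ convergence for a.e.\ $t$; but no such bound is available here (the quartic $W$ only controls $u_\eps$ integrally, not pointwise). This step is both wrong and unnecessary: from $u_\eps\to u$ in $L^1(\Omega_T)$ one directly has $\int_0^T\|u_\eps(t,\cdot)-u(t,\cdot)\|_{L^1(\Omega)}\,dt\to0$, hence a further subsequence along which $\|u_\eps(t,\cdot)-u(t,\cdot)\|_{L^1(\Omega)}\to0$ for a.e.\ $t$, which is exactly \eqref{eq:conv-MM-t} and is what the paper uses. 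Second, your statement that $\Phi$ ``is a homeomorphism of $[-1,1]$ onto $[0,c_0]$'' implicitly assumes $u_\eps$ takes values in $[-1,1]$, which is again not given; the standard remedy is to truncate $u_\eps$ to $[-1,1]$, note that the truncation changes $u_\eps$ by $O(\sqrt{W(u_\eps)})$ pointwise (so by $o(1)$ in $L^1(\Omega_T)$) and only decreases the BV bound on $\Phi(u_\eps)$, and then apply the homeomorphism argument to the truncated functions. Both points are minor and easily repaired; the argument is otherwise sound.
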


The next proposition basically repeats the arguments in \cite[Theorem
1.1]{KRT}. 
\begin{proposition}\label{prop:KRT}
There exists a countable set $S\subset (0,T)$, a subsequence $\eps\to 0$
and Radon measures $\mu^t, t\in [0,T]\setminus S$, 
such that for all $t\in [0,T]\setminus S$
\begin{gather}
  \mu_\eps^t\,\to\, \mu^t\text{ as Radon measures on }\overline{\Omega},
  \label{eq:conv-mu-t}
\end{gather}
such that
\begin{gather}
  \mu\,=\, \LL^1\otimes \mu^t, \label{eq:disint-mu}
\end{gather}
and such that for all $\psi\in
C^1(\overline{\Omega})$ the function 
\begin{gather}
  t\mapsto \mu^t(\psi)\quad\text{ is of bounded variation in }(0,T)
  \label{eq:mu-bv} 
\end{gather}
and has no jumps in $(0,T)\setminus S$. 
\end{proposition}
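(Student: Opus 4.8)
The plan is to follow the classical compactness scheme for diffuse energy measures, essentially as in Ilmanen's and in Kagaya--Tonegawa-type arguments, but tracking the time variable. First I would exploit the uniform energy bound \eqref{ass:bound-mu}: for every fixed $\eps>0$ and every $t\in[0,T]$ the measure $\mu_\eps^t$ has total mass bounded by $\Lenergy$, so for each fixed $t$ the family $(\mu_\eps^t)_\eps$ is precompact in the weak-$*$ topology of Radon measures on the compact set $\overline\Omega$. The difficulty is to pass to a single subsequence $\eps\to0$ that works simultaneously for (almost) all $t$; this is where the bound on $\alpha_\eps$, i.e. \eqref{ass:bound-squares}, enters, since it controls the time oscillation of $t\mapsto\mu_\eps^t(\psi)$.

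Concretely, I would first derive the key estimate that for $\psi\in C^1(\overline\Omega)$ with $\nabla\psi\cdot\nu_\Omega=0$ on $\partial\Omega$,
\begin{gather*}
  \frac{d}{dt}\mu_\eps^t(\psi)\,=\, -\int_\Omega \psi\,\eps\partial_t u_\eps\Big(-\eps\Delta u_\eps+\tfrac1\eps W'(u_\eps)\Big)\tfrac1\eps\,dx + \int_\Omega \eps\partial_t u_\eps\,\nabla\psi\cdot\nabla u_\eps\,dx,
\end{gather*}
or rather, testing the Allen--Cahn-type computation, one bounds $\big|\tfrac{d}{dt}\mu_\eps^t(\psi)\big|$ in terms of $\big(\int_\Omega\eps(\partial_t u_\eps)^2\big)^{1/2}$, $\big(\int_\Omega\tfrac1\eps w_\eps^2\big)^{1/2}$ and the energy density. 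Integrating in time and using Cauchy--Schwarz together with \eqref{ass:bound-squares} and \eqref{ass:bound-mu} yields a bound on the total variation of $t\mapsto\mu_\eps^t(\psi)$ on $(0,T)$ that is uniform in $\eps$. This is precisely the content imported from \cite[Theorem 1.1]{KRT}. Then I would fix a countable dense subset $(\psi_i)_{i\in\N}\subset C^1(\overline\Omega)$ (dense in $C^0(\overline\Omega)$, and one may take representatives satisfying the Neumann condition, or simply work with all of $C^1$ via density), apply Helly's selection theorem together with a diagonal argument to extract a subsequence $\eps\to0$ such that $t\mapsto\mu_\eps^t(\psi_i)$ converges for all $t\in[0,T]$ and all $i$, with the limit functions of bounded variation; the limits then extend by density to a functional $\psi\mapsto g_\psi(t)$ that is, for each $t$ outside the at most countable union $S$ of jump sets of the countably many BV limits, a bounded (by $\Lenergy$) positive linear functional on $C^0(\overline\Omega)$, hence represented by a Radon measure $\mu^t$. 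This gives \eqref{eq:conv-mu-t}, \eqref{eq:mu-bv}, and the no-jump statement outside $S$.

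It remains to identify $\mu=\LL^1\otimes\mu^t$, i.e. \eqref{eq:disint-mu}. For this I would take $\eta\in C^0_c(\Omega_T)$, approximate by sums $\sum_j \zeta_j(t)\psi_j(x)$, and pass to the limit: on the one hand $\mu_\eps(\eta)\to\mu(\eta)$ by \eqref{eq:conv-mu}; on the other hand $\mu_\eps(\zeta_j\psi_j)=\int_0^T\zeta_j(t)\mu_\eps^t(\psi_j)\,dt$, and by the pointwise (in $t$) convergence plus the uniform mass bound $\Lenergy$ and dominated convergence this tends to $\int_0^T\zeta_j(t)\mu^t(\psi_j)\,dt$. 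Summing and using density of such $\eta$ identifies $\mu(\eta)=\int_0^T\mu^t(\eta(t,\cdot))\,dt=(\LL^1\otimes\mu^t)(\eta)$, which also shows $t\mapsto\mu^t$ is measurable and $\mu^t$ is well-defined a.e.\ independently of $S$. The main obstacle is the uniform total-variation estimate on $t\mapsto\mu_\eps^t(\psi)$: one must carefully integrate by parts using the Neumann boundary condition \eqref{eq:neumann} so that no boundary terms appear, and then balance the $\eps$-powers via Cauchy--Schwarz so that exactly the combination $\big(\int\eps(\partial_t u_\eps)^2\big)^{1/2}\big(\int\eps^{-1}w_\eps^2\big)^{1/2}$ — controlled by \eqref{ass:bound-squares} — and the energy bound \eqref{ass:bound-mu} appear; everything else is a routine compactness/diagonal argument. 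Since the statement says this "basically repeats" \cite[Theorem 1.1]{KRT}, I expect the proof to consist largely of citing that estimate and then running Helly plus a disintegration argument.
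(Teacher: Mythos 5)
Your proposal follows the paper's proof almost verbatim: the uniform-in-$\eps$ total variation estimate on $t\mapsto\mu_\eps^t(\psi)$ (the paper's Lemma \ref{lem:bv-mu}, obtained by integrating by parts with the Neumann condition on $u_\eps$, rewriting $2w_\eps\partial_t u_\eps = (\sqrt\eps\partial_t u_\eps+\eps^{-1/2}w_\eps)^2-\eps(\partial_t u_\eps)^2-\eps^{-1}w_\eps^2$, and using \eqref{ass:bound}, \eqref{ass:bound-squares}, \eqref{ass:bound-mu}), then Helly/diagonalization over a countable dense family $(\psi_i)\subset C^1(\overline\Omega)$, extension to Radon measures $\mu^t$ by the uniform mass bound and density, and the disintegration \eqref{eq:disint-mu} via dominated convergence. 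Two minor slips worth noting: no Neumann condition on the test function $\psi$ is needed (it is the boundary condition on $u_\eps$ that kills the boundary term, so all of $C^1(\overline\Omega)$ is admissible directly), and both signs in your displayed formula for $\tfrac{d}{dt}\mu_\eps^t(\psi)$ are reversed, although the Cauchy--Schwarz bound you extract from it is structurally correct and equivalent to the paper's algebraic estimate.
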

Exploiting the lower bound \cite{RS} for the diffuse approximation of the Willmore
functional \eqref{eq:def-W-intro} we obtain that the measures $\mu^t$ are up to a
constant integer-rectifiable with a weak mean curvature satisfying an
appropriate lower estimate.
\begin{theorem}\label{the:lsc-H}
For almost all $t\in (0,T)$ 
\begin{align*}
  &\frac{1}{c_0}\mu^t \text{ is an integral }(n-1)\text{-varifold},\\
  &\mu^t\text{ has weak mean curvature }{H}(t,\cdot)\in L^2(\mu^t),
\end{align*}
and the estimate
\begin{gather}
   \int_{\Omega_T} |{H}|^2\,d\mu \, \leq\,
  \liminf_{\eps\to 0} \int_{\Omega_T}\frac{1}{\eps}w_\eps^2\,dxdt
  \label{eq:lsc-H}  
\end{gather}
holds.
\end{theorem}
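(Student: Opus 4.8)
The plan is to reduce everything to the time-independent lower bound for the diffuse Willmore functional established in \cite{RS}, applied slice-by-slice in time. First I would observe that, by \eqref{ass:bound-squares}, $\int_0^T\big(\int_\Omega\frac1\eps w_\eps^2\,dx\big)\,dt$ is uniformly bounded, and by \eqref{ass:bound-mu} the slice energies $\mu_\eps^t(\overline\Omega)=E_\eps(u_\eps(t,\cdot))$ are uniformly bounded in $t$ and $\eps$. Introduce the functions $g_\eps(t):=\int_\Omega\frac1\eps w_\eps(t,\cdot)^2\,dx$; passing to a further subsequence, Fatou's lemma gives a nonnegative $g\in L^1(0,T)$ with $\int_0^T g\,dt\le\liminf_\eps\int_0^T g_\eps\,dt$ and, along a subsequence depending on $t$, $\liminf_\eps g_\eps(t)\le g(t)$ for a.e.\ $t$ (more precisely one extracts, via a diagonal argument over a countable dense set of times together with Proposition \ref{prop:KRT}, a single subsequence along which $\mu_\eps^t\to\mu^t$ for all $t\notin S$ and $\liminf_\eps g_\eps(t)<\infty$ for a.e.\ $t$).

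Next, fix a time $t\notin S$ with $L:=\liminf_{\eps\to0} g_\eps(t)<\infty$ and pass to a subsequence (in $\eps$, depending on $t$) realizing this liminf and along which $g_\eps(t)$ stays bounded. Then $u_\eps(t,\cdot)$ is a sequence of smooth functions on $\Omega$ with $E_\eps(u_\eps(t,\cdot))\le\Lenergy$ and $\W_\eps(u_\eps(t,\cdot))=g_\eps(t)\le L+1$, and $\mu_\eps^t\to\mu^t$. This is exactly the setting of the main theorem of R\"oger--Sch\"atzle \cite{RS} in dimensions $n=2,3$: under a uniform bound on $E_\eps+\W_\eps$, the limit measure $\mu^t$ is, after division by $c_0$, an integral $(n-1)$-varifold, it possesses a weak mean curvature $H(t,\cdot)\in L^2(\mu^t)$, and the lower bound
\begin{gather*}
  \int_\Omega |H(t,\cdot)|^2\,d\mu^t \,\le\, \liminf_{\eps\to 0}\int_\Omega\frac1\eps w_\eps(t,\cdot)^2\,dx \,=\, g(t)
\end{gather*}
holds (here I use that $\frac1\eps w_\eps^2 = \frac1\eps(-\eps\Delta u_\eps+\frac1\eps W'(u_\eps))^2$ is precisely the integrand of $\W_\eps$). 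So $|H(t,\cdot)|^2$ is $\mu^t$-integrable with $\int_\Omega|H(t,\cdot)|^2\,d\mu^t\le g(t)$ for a.e.\ $t\in(0,T)$.

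Finally I would integrate in time. Using the disintegration \eqref{eq:disint-mu}, $\mu=\LL^1\otimes\mu^t$, and measurability of $t\mapsto\int_\Omega|H(t,\cdot)|^2\,d\mu^t$ (which one checks from the varifold structure, or simply defines $H$ on $\Omega_T$ by gluing the slices), we get
\begin{align*}
  \int_{\Omega_T}|H|^2\,d\mu \,=\, \int_0^T\!\!\int_\Omega |H(t,\cdot)|^2\,d\mu^t\,dt \,\le\, \int_0^T g(t)\,dt \,\le\, \liminf_{\eps\to 0}\int_0^T g_\eps(t)\,dt \,=\, \liminf_{\eps\to 0}\int_{\Omega_T}\frac1\eps w_\eps^2\,dx\,dt,
\end{align*}
which is \eqref{eq:lsc-H}. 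The main obstacle is the subsequence bookkeeping: \cite{RS} is a statement about a fixed family of functions of the space variable, so one must first secure, via a diagonal/Fatou argument, a \emph{single} $\eps$-subsequence for which almost every time slice simultaneously has $\mu_\eps^t\to\mu^t$ (from Proposition \ref{prop:KRT}) and bounded diffuse Willmore energy, and then invoke \cite{RS} on each such slice with a possibly $t$-dependent further subsequence — the slice-wise lower bound is unaffected by this, since a liminf only decreases along subsequences, so the final time-integrated estimate against $\liminf_\eps\int_{\Omega_T}\frac1\eps w_\eps^2$ survives. A secondary point to be careful about is the measurability of the map $t\mapsto H(t,\cdot)$ and of the integrand $\int_\Omega|H(t,\cdot)|^2\,d\mu^t$, which follows from standard facts about varifolds with $L^2$ mean curvature together with the weak continuity properties in \eqref{eq:mu-bv}; I would also note that the integral-varifold and mean-curvature conclusions, being pointwise-in-$t$ statements, automatically hold for a.e.\ $t$ once the slice argument is run.
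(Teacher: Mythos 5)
Your proposal is correct and follows essentially the same route as the paper: Fatou's lemma to secure, for almost every $t$, a finite $\liminf$ of the slice-wise diffuse Willmore energy along the subsequence already fixed in Proposition \ref{prop:KRT} (so that $\mu_\eps^t\to\mu^t$), then the time-independent R\"oger--Sch\"atzle lower bound applied to $u_\eps(t,\cdot)$ on each slice, and Fatou again to integrate in time via the disintegration $\mu=\LL^1\otimes\mu^t$. The extra diagonal argument over a countable dense set of times that you gesture at is not actually needed -- Proposition \ref{prop:KRT} already supplies one subsequence along which $\mu_\eps^t\to\mu^t$ for all $t\notin S$, and Fatou along that same subsequence gives finiteness of $\liminf_\eps g_\eps(t)$ for a.e.\ $t$ -- but this does not affect the validity of the argument.
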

%%%%%%%%%%%%%%%%%%%%%%%%%%%%%%
\subsection{Lower estimate for the generalized velocity} %%%%%%%%%%%%%%%%%
%%%%%%%%%%%%%%%%%%%%%%%%%%%%%%
\begin{theorem}\label{the:lsc-v}
Let $(\mu^t)_{t\in (0,T)}$ be the limit measures obtained in Proposition
\ref{prop:KRT}. Then there exists a generalized velocity $v\in
L^2(\mu,\Rn)$ of $(\mu^t)_{t\in (0,T)}$. Moreover the estimate
\begin{gather}
  \int_{\Omega_T} |v|^2\,d\mu\,\leq\, \liminf_{\eps\to 0} \int_{\Omega_T}
  \eps (\partial_t 
  u_\eps)^2\,dxdt \label{eq:v-lsc}
\end{gather}
is satisfied. In particular, $(\frac{1}{c_0}\mu^t)_{t\in (0,T)}$ is an $L^2$-flow.
\end{theorem}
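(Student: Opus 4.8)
The plan is to extract the generalized velocity from the first term $\eps(\partial_t u_\eps)^2$ in the action by testing the Allen--Cahn type identity against spacetime vector fields and passing to the limit. First I would recall the elementary identity that for any $\eta\in C^1_c((0,T)\times\overline{\Omega})$, multiplying $\partial_t u_\eps$ by $\nabla\eta\cdot\nabla u_\eps$ (or rather combining the natural ``tilted'' test) and integrating by parts in space using the Neumann condition \eqref{eq:neumann}, one obtains something like
\begin{align*}
  \int_0^T\int_\Omega \Big(\partial_t\eta\, e_\eps(u_\eps) + \nabla\eta\cdot\big(\eps\nabla u_\eps\otimes\nabla u_\eps\,\text{-terms}\big)\Big)\,dx\,dt
  = -\int_0^T\int_\Omega \eps\,\partial_t u_\eps\,\nabla u_\eps\cdot\nabla\eta\,dx\,dt + (\text{curvature terms}),
\end{align*}
where $e_\eps(u_\eps)=\frac{\eps}{2}|\nabla u_\eps|^2+\frac1\eps W(u_\eps)$ is the energy density. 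The point is that $\eps\,\partial_t u_\eps\,\nabla u_\eps$ is controlled in $L^1$ by Cauchy--Schwarz: $\int \eps|\partial_t u_\eps||\nabla u_\eps| \le \big(\int\eps(\partial_t u_\eps)^2\big)^{1/2}\big(\int\eps|\nabla u_\eps|^2\big)^{1/2}$, which is bounded by \eqref{ass:bound-squares} and \eqref{ass:bound-mu}. Combined with the analogous bound on the curvature term $w_\eps$ from Theorem \ref{the:lsc-H}, this is exactly what \eqref{eq:def_vel_L2_flow} asks for, once we identify the limit of $\eps\,\partial_t u_\eps\,\nabla u_\eps\,\LL^{n+1}$ as $v\,d\mu$ for a suitable $v$.

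The core construction of $v$ is a Radon--Nikodym / lower-semicontinuity argument in the spirit of Reshetnyak. I would consider the vector-valued measures $\xi_\eps := \eps\,\partial_t u_\eps\,\nabla u_\eps\,\LL^{n+1}$ on $\overline{\Omega_T}$; by the Cauchy--Schwarz bound above they have uniformly bounded total variation, so (up to a subsequence) $\xi_\eps \rightharpoonup \xi$ weakly-$*$ for some $\R^n$-valued Radon measure $\xi$. The key estimate is that for any $\varphi\in C^0_c(\Omega_T;\R^n)$,
\begin{gather*}
  \Big|\int \varphi\cdot d\xi\Big| \le \liminf_{\eps\to 0}\int \eps|\partial_t u_\eps|\,|\nabla u_\eps|\,|\varphi|\,dx\,dt
  \le \Big(\liminf_{\eps\to 0}\int\eps(\partial_t u_\eps)^2\,dx\,dt\Big)^{1/2}\Big(\liminf_{\eps\to 0}\int \eps|\nabla u_\eps|^2|\varphi|^2\,dx\,dt\Big)^{1/2},
\end{gather*}
and since $\eps|\nabla u_\eps|^2\,\LL^{n+1}$ has limit (bounded by) $2\mu$ by the equipartition/Modica--Mortola structure, the second factor is controlled by $\big(\int 2|\varphi|^2\,d\mu\big)^{1/2}$. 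This shows $\xi \ll \mu$ with a density $v\in L^2(\mu;\R^n)$, and tracking the constants through the Cauchy--Schwarz inequality gives precisely \eqref{eq:v-lsc}, $\int|v|^2\,d\mu \le \liminf\int\eps(\partial_t u_\eps)^2$. Then plugging $v\,d\mu = d\xi$ back into the limit of the integrated-by-parts identity yields \eqref{eq:def_vel_L2_flow} with the constant $C$ coming from $\Lsquares^{1/2}\Lenergy^{1/2}$ plus the curvature bound.

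Two remaining points need care. First, the perpendicularity \eqref{eq:def_vel_perp}: $v(t,x)\perp T_x\mu^t$. For the diffuse functions $\eps^{-1}|\nabla u_\eps|^2$ concentrates on the interface with $\nabla u_\eps$ pointing in the normal direction, so $\eps\,\partial_t u_\eps\,\nabla u_\eps$ is normal ``up to lower order''; rigorously this follows because the tangential part of $\nabla u_\eps$ relative to the diffuse interface is negligible in the measure-theoretic limit, using the rectifiability of $\mu^t$ from Theorem \ref{the:lsc-H} and a standard blow-up argument (or one defers it to the argument behind Proposition \ref{prop:uni_gen_vel_perp}). Second, the claim that $(\frac1{c_0}\mu^t)$ is an $L^2$-flow is then immediate: Theorem \ref{the:lsc-H} gives $\frac1{c_0}\mu^t$ integer rectifiable with $H\in L^2$, Proposition \ref{prop:KRT} gives $\mu = \LL^1\otimes\mu^t$, and we have just produced the generalized velocity, so all clauses of Definition \ref{def:gen-velo} hold. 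I expect the main obstacle to be the rigorous justification of the limit identification $\eps\,\partial_t u_\eps\,\nabla u_\eps\,\LL^{n+1}\to v\,d\mu$ together with the perpendicularity — i.e., showing no mass of $\xi$ escapes to a set $\mu$ does not see and that the limiting density is genuinely normal — which is where the discreteness/collapse of diffuse interfaces (the difficulty emphasized in the introduction) bites hardest; the Cauchy--Schwarz bookkeeping that produces \eqref{eq:v-lsc} with the sharp constant is comparatively routine.
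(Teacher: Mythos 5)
Your strategy is essentially the one in the paper: extract an approximate velocity from $\eps\,\partial_t u_\eps\nabla u_\eps$, pass to a weak limit against the limit energy measure $\mu$, use Cauchy--Schwarz to get the $L^2$ bound, and verify \eqref{eq:def_vel_L2_flow} by passing to the limit in an integrated identity. The paper (Lemma \ref{lem:velo}) packages the same estimate through Hutchinson's measure--function pair compactness (Theorem \ref{the:hutch}) applied to $(\tilde\mu_\eps,v_\eps)$ with $\tilde\mu_\eps=\eps|\nabla u_\eps|^2\LL^{n+1}$ and $v_\eps=-\partial_t u_\eps\nabla u_\eps/|\nabla u_\eps|^2$; your Riesz/Radon--Nikodym derivation is a by-hand version of the same thing. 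However, there are two genuine gaps.

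First, the sharp constant. You write that $\eps|\nabla u_\eps|^2\LL^{n+1}$ has limit ``bounded by $2\mu$'', and then claim \eqref{eq:v-lsc} follows. These two claims are inconsistent: the $2\mu$ bound gives only $\int|v|^2\,d\mu\le 2\liminf\int\eps(\partial_t u_\eps)^2$. To get the stated estimate one must use the \emph{equipartition of energy}, i.e.\ that the discrepancy $\xi_\eps=\big(\frac\eps2|\nabla u_\eps|^2-\frac1\eps W(u_\eps)\big)\LL^{n+1}$ vanishes, so that $\eps|\nabla u_\eps|^2\LL^{n+1}=\mu_\eps+\xi_\eps\to\mu$, not $2\mu$. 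This is Proposition \ref{prop:xi=0}, which is itself not automatic: it relies on the R\"oger--Sch\"atzle time-slice result together with a refined dominated-convergence argument over the exceptional sets $\{t:\int\frac1\eps w_\eps^2(t)>k\}$. You invoke ``equipartition'' by name but then write the crude non-equipartitioned bound; you need to actually use the sharp limit $\mu$.

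Second, the perpendicularity \eqref{eq:def_vel_perp} cannot be ``deferred to the argument behind Proposition \ref{prop:uni_gen_vel_perp}.'' That proposition \emph{assumes} $(\mu^t)$ is already an $L^2$-flow, i.e.\ that a velocity satisfying both \eqref{eq:def_vel_perp} and \eqref{eq:def_vel_L2_flow} exists; using it here would be circular. Perpendicularity of the limiting $v$ does not follow from rectifiability of $\mu^t$ plus ``$v_\eps$ is normal at the $\eps$-level'' by a naive blow-up, because the tangent plane $T_x\mu^t$ is a property of the limit measure, not of the diffuse level sets, and mass may transfer to hidden boundaries. The paper's Lemma \ref{lem:perp-v} handles this by lifting to general varifolds $\tilde V_\eps$ on $\Omega_T\times\R^{n\times n}$, using the varifold convergence $V^t_\eps\to V^t$ from \cite[Theorem 4.1]{RS} and equipartition to conclude $\tilde V_\eps\to V$, then applying Hutchinson compactness a second time to the lifted $\hat v_\eps$, and finally exploiting the pointwise identity $P_\eps v_\eps\equiv 0$ to conclude $P\,v=0$ $\mu$-a.e. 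This is a substantive step and your proposal as written does not supply it.

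Apart from these two issues, your outline of the $L^2$-flow verification — passing to the limit in the integrated-by-parts identity (the analogue of \eqref{eq:KRT-1-rep}) and bounding $\big|\int(\partial_t\eta+\nabla\eta\cdot v)\,d\mu\big|$ by $\frac12\|\eta\|_{C^0}\big(\alpha(\overline{\Omega_T})+\beta(\overline{\Omega_T})\big)$ — is correct and matches the paper.
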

We obtain $v$ as a limit of suitably defined approximate velocities, see
Lemma \ref{lem:velo}. On the phase boundary $v$
coincides with the 
(standard) distributional velocity of the bulk-phase $\{u(t,\cdot)=1\}$.
However, our definition extends the
velocity also to `hidden boundaries', which seems necessary in order to
prove the Gamma-convergence of the action functional; see the discussion
in Section \ref{sec:discussion}. 
\begin{proposition}
\label{prop:velo}
Define the \emph{generalized normal velocity} $V$ in direction of the
inner normal of $\{u=1\}$ by 
\begin{gather*}
  V(t,x)\,:=\, v(t,x)\cdot \frac{\nabla u}{|\nabla u|}(t,x),\quad
  \text{ for }(t,x)\in \partial^*\{u=1\}.
\end{gather*}
Then $V\in L^1(|\nabla u|)$
holds and $V|_{\partial^*\{u=1\}}$ is the unique
vector field that satisfies for all $\eta\in C^1_c(\Omega_T)$
\begin{equation}
  \int_0^T\int_{\Omega}V(t,x)\eta(t,x)\,d|\nabla u(t,\cdot)|(x) dt\,=\,
  -\,\int_{\Omega_T}u\partial_t\eta \,dxdt.
  \label{eq:def-v-diff}
\end{equation}
\end{proposition}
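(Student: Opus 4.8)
The plan is to derive \eqref{eq:def-v-diff} from the generalized $L^2$-flow structure established in Theorem \ref{the:lsc-v}, together with the Modica--Mortola convergence of the measures $\mu^t$ to a multiple of the phase-boundary varifold. First I would recall that by Theorem \ref{the:lsc-v} there is a generalized velocity $v\in L^2(\mu,\Rn)$ satisfying \eqref{eq:def_vel_perp} and \eqref{eq:def_vel_L2_flow}, and that the limit $u\in BV(\Omega_T,\{-1,1\})$ from Proposition \ref{prop:MM} has, for a.e.\ $t$, $|\nabla u(t,\cdot)| \leq \tfrac{2}{c_0}\mu^t$ as measures, with equality on the reduced boundary $\partial^*\{u(t,\cdot)=1\}$ up to the multiplicity-one part of $\mu^t$. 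The key point is that on the phase boundary the diffuse interfaces cannot cancel: the lower-semicontinuity of the Modica--Mortola energy forces $\tfrac{1}{c_0}\mu^t\eing\partial^*\{u(t,\cdot)=1\}$ to carry multiplicity exactly one, so $\tfrac{2}{c_0}\mu^t$ restricted to $\partial^*\{u(t,\cdot)=1\}$ equals $|\nabla u(t,\cdot)|$ and has the correct unit normal $\nabla u/|\nabla u|$ as its tangent-plane orthogonal complement.

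Next I would test \eqref{eq:def_vel_L2_flow} against $\eta\in C^1_c(\Omega_T)$ and compare it with the elementary distributional identity for the BV function $u$: since $\partial_t u_\eps \to \partial_t u$ in the sense of distributions (or rather, integrating by parts in the definition of $\mu$ and using $u_\eps\to u$ in $L^1(\Omega_T)$), one has $\int_{\Omega_T} u\,\partial_t\eta\,dx\,dt = -\langle \partial_t u, \eta\rangle$, and $\partial_t u$ is a measure supported on $\cup_t \{t\}\times\partial^*\{u(t,\cdot)=1\}$. The crucial link is the approximate-velocity construction in Lemma \ref{lem:velo}: the approximate velocities are built precisely so that their limit $v$ reproduces the time-derivative of the (diffuse) surface-area measures, and on the phase boundary this time-derivative coincides (after multiplying by the unit normal) with $\partial_t u$. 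Concretely, I would show that for $\eta\in C^1_c(\Omega_T)$,
\begin{gather*}
  \int_0^T\int_\Omega V(t,x)\eta(t,x)\,d|\nabla u(t,\cdot)|(x)\,dt
  \,=\, \int_0^T\int_\Omega \big(v\cdot\tfrac{\nabla u}{|\nabla u|}\big)\eta\,d|\nabla u(t,\cdot)|\,dt,
\end{gather*}
and that by the multiplicity-one identification this equals $\tfrac{2}{c_0}\int_{\Omega_T}(v\cdot\nu)\eta\,d(\mu^t\eing\partial^*\{u=1\})\,dt$, which in turn, by the velocity identity on the $L^2$-flow and the fact that on the phase boundary the tangential-motion term $\partial_t\eta$ integrated against $\mu^t$ accounts exactly for the full variation, reduces to $-\int_{\Omega_T} u\,\partial_t\eta\,dx\,dt$. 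The integrability $V\in L^1(|\nabla u|)$ follows from $v\in L^2(\mu,\Rn)$, the bound $|\nabla u(t,\cdot)|\leq\tfrac{2}{c_0}\mu^t$, and Cauchy--Schwarz, since $|\nabla u(t,\cdot)|$ has finite total mass by \eqref{eq:est-MM}.

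For uniqueness: \eqref{eq:def-v-diff} determines $\int V\eta\,d|\nabla u|$ for every $\eta\in C^1_c(\Omega_T)$, hence determines the $L^1(|\nabla u|)$ function $V$ (restricted to $\partial^*\{u=1\}$) uniquely by density of $C^1_c$ in the relevant function space and the fact that $|\nabla u(t,\cdot)|$-a.e.\ point of $\partial^*\{u(t,\cdot)=1\}$ is a Lebesgue point.

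I expect the main obstacle to be the rigorous identification of the time-derivative term: making precise that the generalized velocity $v$ from Lemma \ref{lem:velo}, when paired with the unit normal on $\partial^*\{u=1\}$, reproduces exactly $\partial_t u$ and not some larger object that also sees the ``hidden boundaries'' away from $\partial^*\{u=1\}$. This is where the multiplicity-one property of $\mu^t$ on the phase boundary — and the orthogonality \eqref{eq:def_vel_perp} restricting $v$ to the normal direction — must be invoked carefully, together with the fact that at $\Ha^n$-a.e.\ point of $\partial^*\{u=1\}$ the space-time tangent plane of $\mu$ exists (as asserted at the end of Section \ref{sec:L2-flows}), so that Proposition \ref{prop:uni_gen_vel_perp} pins down $v$ there. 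The remaining steps — the Cauchy--Schwarz integrability bound and the density argument for uniqueness — are routine.
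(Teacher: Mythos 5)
Your claim that $\tfrac{1}{c_0}\mu^t\eing\partial^*\{u(t,\cdot)=1\}$ has multiplicity exactly one is false, and it plays an essential role in your chain of equalities. The paper only proves the one-sided inequality $\mu^t\geq\tfrac{c_0}{2}|\nabla u(t,\cdot)|$; equality can fail if hidden diffuse layers accumulate onto the reduced boundary (giving multiplicity $3,5,\dots$ there), and nothing in Assumption \ref{ass:main} forbids this. The step in which you rewrite $\int(v\cdot\nu)\eta\,d|\nabla u(t,\cdot)|\,dt$ as $\tfrac{2}{c_0}\int(v\cdot\nu)\eta\,d(\mu^t\eing\partial^*\{u=1\})\,dt$ therefore does not hold in general, and the subsequent appeal to ``the velocity identity on the $L^2$-flow'' is also problematic: \eqref{eq:def_vel_L2_flow} is only a one-sided bound $\vert\int\partial_t\eta+\nabla\eta\cdot v\,d\mu\vert\leq C\|\eta\|$, not an identity, so it cannot by itself produce the exact equality \eqref{eq:def-v-diff}.

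The actual proof never needs to match densities of $\mu^t$ and $|\nabla u(t,\cdot)|$ and never invokes the $L^2$-flow inequality. It proceeds by pure BV geometry: Proposition \ref{prop:lower-bound-mu} gives $g\mu\geq\tfrac{c_0}{2}|\nablatx u|$ with $g\in L^2(\mu)$, which by Cauchy--Schwarz gives $v\in L^1(|\nablatx u|)\subset L^1(|\nabla u|)$. Then one combines \eqref{eq:gen_v_perp} from Proposition \ref{prop:uni_gen_vel_perp} (the space-time vector $(1,v)$ lies in $T_{(t_0,x_0)}\mu$) with Proposition \ref{prop:rect} (that tangent plane exists and agrees with the approximate tangent plane of $\partial^*\{u=1\}$ for $\Ha^n$-a.e.\ point of the reduced boundary) and absolute continuity $|\nablatx u|\ll\mu$. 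Tangency of $(1,v)$ to $\partial^*\{u=1\}$ means exactly that $(1,v)$ is orthogonal to the space-time unit normal $\nablatx u/|\nablatx u|$, so as measures $\partial_t u + v\cdot\nabla u = 0$, i.e.\ $-\int\eta\,d\partial_t u=\int\eta v\,d\nabla u=\int_0^T\int\eta V\,d|\nabla u(t,\cdot)|\,dt$. That is already \eqref{eq:def-v-diff}, independently of whatever multiplicity $\mu^t$ happens to carry on the phase boundary. You correctly flagged Proposition \ref{prop:uni_gen_vel_perp} and the existence of $T_{(t_0,x_0)}\mu$ on $\partial^*\{u=1\}$ as the crucial inputs, but the conclusion should be drawn from tangent-plane orthogonality alone, not by trying to pass through the measure $\mu^t$ and the $L^2$-flow PDE.
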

\subsection{Lower estimate of the action functional}
\label{subsec:lsc-action}
As our main result we obtain the following lower estimate for $\A_\eps$.
\begin{theorem}\label{the:main}
Let Assumption \ref{ass:main} hold, and let $\mu$,
$(\mu^t)_{t\in[0,T]}$, and  
$S$ be  the measures and the countable set of singular times that we
obtained in Proposition \ref{prop:KRT}.  
Define the \emph{ nucleation cost} $\A_{nuc}(\mu)$ by
\begin{equation}
\begin{split}
  \A_{nuc}(\mu)\,:=\,& \sum_{t_0\in S}
  \sup_{\psi}\Big(\lim_{t\downarrow t_0} \mu^t(\psi)-\lim_{t\uparrow
  t_0}\mu^t(\psi)\Big)
  \\
  +\,& \sup_\psi \big(\lim_{t\downarrow
  0}\mu^t(\psi)-\mu^0(\psi)\big) + \sup_\psi \big(\mu^T(\psi)
  -\lim_{t\uparrow T}\mu^t(\psi)\big),
  \label{eq:def-S-nuc} 
\end{split}
\end{equation}
where the $\sup$ is taken over all $\psi\in C^1(\overline{\Omega})$ with
$0\leq\psi\leq 1$.
Then
\begin{gather}
  \liminf_{\eps\to 0} \A_\eps(u_\eps)\,\geq\, \int_{\Omega_T}
  |v-H|^2\,d\mu + 4\A_{nuc}(\mu).\label{eq:lsc-S}
\end{gather}
\end{theorem}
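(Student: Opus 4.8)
Write $\alpha$ for the weak-$*$ limit of the measures $\alpha_\eps$; by \eqref{eq:ass-liminf} we have $\alpha(\overline{\Omega_T})=\liminf_{\eps\to0}\A_\eps(u_\eps)$, so it suffices to show that $\alpha$ dominates $|v-H|^2\mu$ on a ``bulk'' part and, disjointly, that it carries mass at least $4\A_{nuc}(\mu)$ concentrated at the singular times $t_0\in S$ and at $t=0,T$. Two elementary pointwise inequalities will do the work. First, since $\big(\sqrt\eps\,\partial_t u_\eps-\tfrac1{\sqrt\eps}w_\eps\big)^2\ge0$,
\[
  \Big(\sqrt\eps\,\partial_t u_\eps+\tfrac1{\sqrt\eps}w_\eps\Big)^2\;\ge\;4\,\partial_t u_\eps\,w_\eps\qquad\text{pointwise in }\Omega_T.
\]
Second, completing the square, for any $\phi$ and any vector field $X$ one has $\phi^2\big(\sqrt\eps\,\partial_t u_\eps+\tfrac1{\sqrt\eps}w_\eps\big)^2\ge-2\phi^2\big(\sqrt\eps\,\partial_t u_\eps+\tfrac1{\sqrt\eps}w_\eps\big)\sqrt\eps\,X\cdot\nabla u_\eps-\phi^2\eps\,|X\cdot\nabla u_\eps|^2$.

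For the bulk estimate I would first establish the pair convergences (these encode the construction of $v$ behind Theorem \ref{the:lsc-v} and the varifold convergence behind Theorem \ref{the:lsc-H}): for every $Y\in C^1_c(\Omega_T,\Rn)$,
\[
  \int_{\Omega_T}\Big(\sqrt\eps\,\partial_t u_\eps+\tfrac1{\sqrt\eps}w_\eps\Big)\sqrt\eps\,Y\cdot\nabla u_\eps\;\longrightarrow\;-\int_{\Omega_T}(v-H)\cdot Y\,d\mu,
\]
which follows by writing $\int w_\eps\,Y\cdot\nabla u_\eps=\int\eps\,\nabla u_\eps\cdot DY^{\!\top}\nabla u_\eps-\int\dive Y\,d\mu_\eps\to\int H\cdot Y\,d\mu$ (first variation of the diffuse area, the limit being the weak mean curvature of the $\mu^t$ provided by Theorem \ref{the:lsc-H}) and recognising $\int\eps\,\partial_t u_\eps\,Y\cdot\nabla u_\eps\to-\int v\cdot Y\,d\mu$ (this identifies the generalized velocity of Theorem \ref{the:lsc-v}); one also needs $\int\eps\,|Y\cdot\nabla u_\eps|^2\to\int(\nu\cdot Y)^2\,d\mu$ with $\nu:=\nabla u/|\nabla u|$, using equipartition of energy. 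Applying the second pointwise inequality, integrating over $\Omega_T$, passing to the limit with $\phi\in C^1_c(\Omega_T)$, $0\le\phi\le1$ (and applying the pair convergence to $Y=\phi^2X$), and then optimizing over $X\in C^1_c(\Omega_T,\Rn)$ — which may be taken to approximate $v-H$ in $L^2(\mu,\Rn)$, and whose limit is automatically parallel to $\nu$ since $v,H\perp T_x\mu^t$ — yields $\int_{\Omega_T}\phi^2\,d\alpha\ge\int_{\Omega_T}\phi^2|v-H|^2\,d\mu$ for all such $\phi$, hence the measure inequality $\alpha\eing\Omega_T\ge|v-H|^2\mu$.

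For the nucleation estimate I would fix $\psi\in C^1(\overline\Omega)$ with $0\le\psi\le1$ and use the identity (valid thanks to the Neumann condition \eqref{eq:neumann})
\[
  \tfrac{d}{dt}\,\mu_\eps^t(\psi)\;=\;\int_\Omega\psi\,w_\eps\,\partial_t u_\eps\,dx\;-\;\eps\int_\Omega\nabla\psi\cdot\nabla u_\eps\,\partial_t u_\eps\,dx,
\]
where, by Cauchy--Schwarz together with \eqref{ass:bound-squares} and \eqref{ass:bound-mu}, the last term integrated over an interval $(t_-,t_+)$ is bounded by $C(\psi)(t_+-t_-)^{1/2}$ uniformly in $\eps$. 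Testing the first pointwise inequality against $\psi$ times a temporal cutoff $\chi$ supported in $(t_-,t_+)$, letting $\eps\to0$ (so that $\int\chi\,\tfrac{d}{dt}\mu_\eps^t(\psi)\,dt=-\int\chi'\mu_\eps^t(\psi)\,dt\to-\int\chi'\mu^t(\psi)\,dt$ by dominated convergence, using \eqref{ass:bound-mu} and Proposition \ref{prop:KRT}), then shrinking the cutoff to $(t_-,t_+)$ and sending $t_\pm\to t_0$ (through values avoiding $S$ and the at most countably many atoms of $\alpha$ in time), I obtain $\alpha(\{t_0\}\times\overline\Omega)\ge4\big(\lim_{t\downarrow t_0}\mu^t(\psi)-\lim_{t\uparrow t_0}\mu^t(\psi)\big)$ for each $t_0\in S$; the same argument on intervals $(0,t_+)$ and $(t_-,T)$ (where the boundary term of the integration by parts now produces $\mu_\eps^0(\psi)\to\mu^0(\psi)$, resp.\ $\mu_\eps^T(\psi)\to\mu^T(\psi)$) gives $\alpha(\{0\}\times\overline\Omega)\ge4\big(\lim_{t\downarrow0}\mu^t(\psi)-\mu^0(\psi)\big)$ and $\alpha(\{T\}\times\overline\Omega)\ge4\big(\mu^T(\psi)-\lim_{t\uparrow T}\mu^t(\psi)\big)$. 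Taking the supremum over $\psi$ reproduces the three terms of \eqref{eq:def-S-nuc}.

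Finally, the sets $\Omega_T\setminus\bigcup_{t_0\in S}(\{t_0\}\times\overline\Omega)$, the slices $\{t_0\}\times\overline\Omega$ for $t_0\in S$, and $\{0\}\times\overline\Omega$, $\{T\}\times\overline\Omega$ are pairwise disjoint subsets of $\overline{\Omega_T}$; since by \eqref{eq:disint-mu} the measure $\mu=\LL^1\otimes\mu^t$ assigns no mass to any slice $\{t_0\}\times\Omega$, the bulk inequality restricted to the first set still captures the full $\int_{\Omega_T}|v-H|^2\,d\mu$, and adding the nucleation bounds on the remaining sets gives $\alpha(\overline{\Omega_T})\ge\int_{\Omega_T}|v-H|^2\,d\mu+4\A_{nuc}(\mu)$, which is \eqref{eq:lsc-S}. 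I expect the main obstacle to be the bulk inequality $\alpha\eing\Omega_T\ge|v-H|^2\mu$: it is where the correct (sharp) cross term $-2\int v\cdot H\,d\mu$ is produced rather than the lossy $\int(|v|^2+|H|^2)\,d\mu$ that a naive splitting of the square would give, where the argument must be carried out \emph{locally} as a measure inequality (a merely global bound $\alpha(\Omega_T)\ge\int|v-H|^2\,d\mu$ would not separate cleanly from the interior nucleation mass), and where the R\"oger--Sch\"atzle lower bound and equipartition of energy enter, through the identification of the limits of $w_\eps\nabla u_\eps$ and of $\eps\,\nabla u_\eps\otimes\nabla u_\eps\,\LL^n$.
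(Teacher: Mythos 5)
Your proposal is correct and its overall strategy is the same as the paper's: separate the ``absolutely continuous in time'' part of $\alpha$ (which must dominate $|v-H|^2\mu$) from the ``atomic in time'' part (which must dominate $4\A_{nuc}(\mu)$), prove each bound, and add. In the paper this is the Radon--Nikodym decomposition of $\alpha_\Omega=\proj_\#\alpha$ on $[0,T]$ in the last display of the proof, with Lemmas \ref{lem:first-var} and \ref{lem:atomic} furnishing the two bounds. Your treatment of the nucleation term is essentially identical to Lemma \ref{lem:atomic}: both start from $(a+b)^2\ge 4ab$, integrate $\partial_t u_\eps\,w_\eps$ by parts in space using the Neumann condition, and read off the atomic mass; the paper derives the measure inequality \eqref{eq:est-alpha-psi} on $[0,T]$ and then evaluates atoms, while you shrink a temporal cutoff to $\{t_0\}$ --- the bookkeeping is slightly different but the content is the same (note the term $\eps\int\nabla\psi\cdot\nabla u_\eps\,\partial_t u_\eps$ vanishes with $(t_+-t_-)^{1/2}$ in both arguments, either via \eqref{eq:KRT-2} or via the pair convergence \eqref{eq:conv-pair}).

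Where you genuinely diverge is the bulk inequality $\alpha\eing\Omega_T\ge|v-H|^2\mu$. The paper establishes the measure-function pair convergence $(\tilde\mu_\eps,v_\eps-H_\eps)\to(\mu,v-H)$ (Lemma \ref{lem:first-var}) and then invokes Hutchinson's abstract lower semicontinuity theorem (Theorem \ref{the:hutch}) with $F(y)=|y|^2$. You instead re-prove this instance of lower semicontinuity by hand, completing the square against a test vector field $X$ and optimizing; this is a more self-contained route and buys you some transparency, but it is mathematically the same mechanism (duality is how one proves Hutchinson's theorem anyway). Two caveats, neither fatal. First, the limit $\int w_\eps\,Y\cdot\nabla u_\eps\,dxdt\to\int H\cdot Y\,d\mu$ for time-dependent $Y$ is \emph{not} an immediate corollary of the time-sliced varifold convergence in Theorem \ref{the:lsc-H}: one must excise the bad time slices $\{t:\int\eps^{-1}w_\eps^2\,dx>k\}$ and apply dominated convergence, which is exactly the nontrivial content of Lemma \ref{lem:first-var}. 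Your ``I would first establish the pair convergences'' tacitly absorbs this step. Second, in your quadratic-term limit $\int\eps|Y\cdot\nabla u_\eps|^2\to\int(\nu\cdot Y)^2\,d\mu$ the correct $\nu$ is the unit normal to $T_x\mu^t$, not $\nabla u/|\nabla u|$ (these differ on hidden boundaries); what the varifold convergence actually yields is $\int Y^\top(I-P)Y\,d\mu$ with $P$ the tangential projection of $\mu^t$, which is what you use in the optimization, so the conclusion is unaffected.
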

In the previous definition of nucleation cost we have tacitly chosen
good representatives of $\mu^t(\psi)$ (see \cite{AFPa00}). With this
choice the jump parts in \eqref{eq:def-S-nuc} are well-defined.

Eventually let us remark that, in view of Theorem \ref{the:lsc-H}, we can conclude that 
$\mathcal S_{nuc}$ does indeed measure only $(n-1)$-dimensional jumps. 

Theorem \ref{the:main} improves \cite{KORV} in the higher-multiplicity
case. We will discuss our main results in Section
\ref{sec:discussion}.  
\subsection{Convergence of the Allen--Cahn equation to Mean curvature
  flow} 
\label{subsec:mcf}
Let $n=2,3$ and consider solutions $(u_\eps)_{\eps>0}$ of the
Allen--Cahn equation 
\eqref{eq:AC} satisfying \eqref{ass:init} and \eqref{eq:neumann}. Then
$S_\eps(u_\eps)=0$ and the results of Sections
\ref{subsec:lsc-mc}-\ref{subsec:lsc-action} apply: 
There exists a subsequence $\eps\to 0$ such that the phase functions
$u_\eps$ converge to a phase indicator function $u$, such that the energy
measures $\mu_\eps^t$ converge an $L^2$-flow $(\mu^t)_{t\in (0,T)}$, and
such that $\mu$-almost everywhere
\begin{gather}
  H \,=\, v \label{eq:lim-AC}
\end{gather}
holds, where $H(t,\cdot)$ denotes the weak mean curvature of $\mu^t$ and where
$v$ denotes the generalized velocity of $(\mu^t)_{t\in (0,T)}$ in the
sense of Definition \ref{def:gen-velo}. Moreover $\A_{nuc}(\mu)=0$, which
shows that for any nonnegative $\psi\in C^1(\overline{\Omega})$ the function
$t\mapsto \mu^t(\psi)$ cannot jump upwards. From
\eqref{eq:AC} and \eqref{eq:KRT-1} below one obtains that for any $\psi\in
C^1(\overline{\Omega})$ and all $\zeta\in C^1_c(0,T)$
\begin{gather}
  - \int_0^T \partial_t \zeta \, \mu^t_\eps(\psi)\,dt \,=\,
  -\int_{\Omega_T} \zeta(t)\Big(\frac{1}{\eps} \psi(x)w_\eps^2(t,x) +
  \nabla\psi(x)\cdot\nabla u_\eps w_\eps(t,x)\Big)\,dx
  dt. \label{eq:brakke-eps}
\end{gather}
We will show that suitably defined `diffuse
mean curvatures' converge as $\eps\to 0$, see
\eqref{eq:conv-H-eps}. Using this result we can pass to the limit in
\eqref{eq:brakke-eps} and we obtain for any nonnegative functions $\psi\in
C^1(\overline{\Omega})$, $\zeta\in C^1_c(0,T)$ that
\begin{gather*}
  - \int_0^T \partial_t \zeta \mu^t(\psi)\,dt \,\leq\,
  -\int_0^T \zeta(t)\int_{\Omega} \Big( H^2(t,x) + \nabla\psi(x)\cdot
  H(t,x)\Big)\,d\mu^t(x) dt, 
\end{gather*}
which is an time-integrated version of Brakke's inequality.
%==================================================
\section{Proofs of Propositions \ref{prop:MM}, \ref{prop:KRT} and Theorem
\ref{the:lsc-H}}
%==================================================
\label{sec:proofs-prelim}
\begin{proof}[Proof of Proposition \ref{prop:MM}]
By \eqref{ass:bound-squares}, \eqref{ass:bound-mu} we obtain that
\begin{gather*}
  \int_{\Omega_T} \Big(\frac{\eps}{2}|\nablatx u_\eps|^2
  +\frac{1}{\eps}W(u_\eps)\Big)\,dxdt\,\leq\, \Lsquares + T\Lenergy.
\end{gather*}
This implies by \cite{Mo} the existence of  a subsequence $\eps\to 0$
and of a function  $u\in BV(\Omega_T;\{-1,1\})$ such that
\begin{gather*}
  u_\eps \,\to\, u\quad\text{ in }L^1(\Omega_T)
\end{gather*}
and
\begin{gather*}
  \frac{c_0}{2}\int_{\Omega_T} \,d|\nablatx u|\,\leq\, \liminf_{\eps\to 0}
  \int_{\Omega_T}\big(\frac{\eps}{2}|\nablatx u_\eps|^2
  +\frac{1}{\eps}W(u_\eps)\big)\,dxdt  \,\leq\, \big(\Lsquares + T\Lenergy\big).
\end{gather*}
After possibly taking another subsequence, for almost all $t\in (0,T)$
\begin{gather}
  u_\eps(t,\cdot) \,\to\, u(t,\cdot)\quad\text{ in
  }L^1(\Omega) \label{eq:conv-ut} 
\end{gather}
holds.
Using \eqref{ass:bound-mu} and applying \cite{Mo} for a fixed $t\in
(0,T)$ with \eqref{eq:conv-ut} we get that
\begin{gather*}
  \frac{c_0}{2}\int_{\Omega} \,d|\nabla u|(t,\cdot)\,\leq\, \liminf_{\eps\to 0}
  \mu_\eps^t(\Omega)  \,\leq\,\Lenergy. 
\end{gather*}
\end{proof}
Before proving Proposition \ref{prop:KRT} we show that the
time-derivative of the energy-densities $\mu_\eps^t$ is controlled.
\begin{lemma}
\label{lem:bv-mu}
There exists $C=C(\Laction,\Lsquares,\Lenergy)$ such that for all
$\psi\in C^1(\overline{\Omega})$ 
\begin{gather}
  \int_0^T |\partial_t \mu_\eps^t(\psi)|\,dt\,\leq\,
  C\|\psi\|_{C^1(\overline{\Omega})}. \label{eq:bv-mu}
\end{gather}
\end{lemma}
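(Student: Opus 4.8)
The plan is to differentiate $\mu_\eps^t(\psi) = \int_\Omega \big(\frac{\eps}{2}|\nabla u_\eps|^2 + \frac{1}{\eps}W(u_\eps)\big)(t,x)\psi(x)\,dx$ in time, using the Allen--Cahn structure of the integrand together with the equation satisfied by the time-derivative of $u_\eps$. Writing $\xi_\eps := \frac{\eps}{2}|\nabla u_\eps|^2 + \frac{1}{\eps}W(u_\eps)$ for the energy density, a direct computation gives
\begin{align*}
  \partial_t \mu_\eps^t(\psi) \,=\, \int_\Omega \Big(\eps \nabla u_\eps\cdot\nabla\partial_t u_\eps + \frac{1}{\eps}W'(u_\eps)\partial_t u_\eps\Big)\psi\,dx.
\end{align*}
Integrating the first term by parts in space and using the Neumann condition \eqref{eq:neumann} to kill the boundary term, I rewrite this as
\begin{align*}
  \partial_t \mu_\eps^t(\psi) \,=\, \int_\Omega \Big(-\eps\Delta u_\eps + \frac{1}{\eps}W'(u_\eps)\Big)\partial_t u_\eps\,\psi\,dx \;-\; \int_\Omega \eps\,\partial_t u_\eps\,\nabla u_\eps\cdot\nabla\psi\,dx,
\end{align*}
that is, $\partial_t\mu_\eps^t(\psi) = \int_\Omega \big(w_\eps\,\partial_t u_\eps\,\psi - \eps\,\partial_t u_\eps\,\nabla u_\eps\cdot\nabla\psi\big)\,dx$ with $w_\eps$ as in \eqref{eq:def-w}.

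Next I estimate each of the two terms by Cauchy--Schwarz, aiming to bound everything by the quantities controlled in Remark \ref{rem:ass-KRT}. For the first term I split $w_\eps\,\partial_t u_\eps = (\eps^{-1/2}w_\eps)(\eps^{1/2}\partial_t u_\eps)$ and apply Young's inequality to get $|w_\eps\,\partial_t u_\eps| \le \frac12\big(\frac{1}{\eps}w_\eps^2 + \eps(\partial_t u_\eps)^2\big)$, so that
\begin{align*}
  \Big|\int_\Omega w_\eps\,\partial_t u_\eps\,\psi\,dx\Big| \,\leq\, \frac{\|\psi\|_{C^0}}{2}\int_\Omega \Big(\frac{1}{\eps}w_\eps^2 + \eps(\partial_t u_\eps)^2\Big)\,dx.
\end{align*}
For the second term I write $\eps\,\partial_t u_\eps\,\nabla u_\eps = (\eps^{1/2}\partial_t u_\eps)(\eps^{1/2}\nabla u_\eps)$ and bound $\eps|\partial_t u_\eps||\nabla u_\eps| \le \frac12\big(\eps(\partial_t u_\eps)^2 + \eps|\nabla u_\eps|^2\big) \le \frac12\eps(\partial_t u_\eps)^2 + \xi_\eps$, using $\frac{\eps}{2}|\nabla u_\eps|^2 \le \xi_\eps$ and being slightly generous with constants; hence
\begin{align*}
  \Big|\int_\Omega \eps\,\partial_t u_\eps\,\nabla u_\eps\cdot\nabla\psi\,dx\Big| \,\leq\, \|\nabla\psi\|_{C^0}\int_\Omega \Big(\eps(\partial_t u_\eps)^2 + \xi_\eps\Big)\,dx.
\end{align*}

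Finally I integrate in $t$ over $(0,T)$ and invoke the uniform bounds: $\int_{\Omega_T}\frac{1}{\eps}w_\eps^2 + \eps(\partial_t u_\eps)^2\,dxdt \le \Lsquares$ by \eqref{ass:bound-squares}, and $\int_{\Omega_T}\xi_\eps\,dxdt = \int_0^T \mu_\eps^t(\Omega)\,dt \le T\Lenergy$ by \eqref{ass:bound-mu}. Combining the two estimates yields
\begin{align*}
  \int_0^T |\partial_t\mu_\eps^t(\psi)|\,dt \,\leq\, \Big(\tfrac12\Lsquares + \Lsquares + T\Lenergy\Big)\|\psi\|_{C^1(\overline{\Omega})},
\end{align*}
which is the claimed inequality with $C = \frac32\Lsquares + T\Lenergy$ (any such explicit constant depending only on $\Laction,\Lsquares,\Lenergy$ will do). I do not expect a serious obstacle here; the only point requiring a little care is the justification of differentiating under the integral sign and of the integration by parts, which is legitimate because the $u_\eps$ are smooth and satisfy the Neumann condition, so all boundary contributions vanish. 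A minor bookkeeping subtlety is keeping track of which norm of $\psi$ multiplies which term, but since $\|\psi\|_{C^0}, \|\nabla\psi\|_{C^0} \le \|\psi\|_{C^1(\overline{\Omega})}$ this collapses cleanly into a single $C^1$-norm at the end.
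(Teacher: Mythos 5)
Your proof is correct and follows essentially the same route as the paper: differentiate under the integral, integrate by parts using the Neumann condition to get $\partial_t\mu_\eps^t(\psi)=\int_\Omega w_\eps\,\partial_t u_\eps\,\psi\,dx-\int_\Omega\eps\,\partial_t u_\eps\,\nabla u_\eps\cdot\nabla\psi\,dx$, and estimate against the uniform bounds from Remark \ref{rem:ass-KRT}. The only (cosmetic) variation is that the paper rewrites the cross term via $2w_\eps\partial_t u_\eps=(\sqrt\eps\,\partial_t u_\eps+\eps^{-1/2}w_\eps)^2-\eps(\partial_t u_\eps)^2-\eps^{-1}w_\eps^2$ so as to invoke \eqref{ass:bound} and \eqref{ass:bound-squares} separately (a form it reuses later in \eqref{eq:KRT-1-rep}), whereas you absorb the cross term directly by Young's inequality using only $\Lsquares$; both give a constant of the allowed form.
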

\begin{proof}
Using \eqref{eq:neumann} we compute that 
\begin{align}
  2\partial_t\mu_\eps^t(\psi) \,
  =\, &\int_\Omega \big(\sqrt{\eps}\partial_t u_\eps
  +\frac{1}{\sqrt{\eps}}w_\eps\big)^2(t,x)\psi(x)\, dx - \int_{\Omega}
  \big(\eps(\partial_t u_\eps)^2 
  +\frac{1}{\eps}w_\eps^2\big)(t,x)\psi(x) \,dx\notag\\
  &  - 2\int_{\Omega} \eps\nabla\psi(x)\cdot \partial_t u_\eps(t,x)\nabla
  u_\eps(t,x)\, dx. \label{eq:KRT-1} 
\end{align}
By \eqref{ass:bound-squares}, \eqref{ass:bound-mu} we estimate
\begin{align}
  \Big|2\int_{\Omega_T} \eps\nabla\psi\cdot \partial_t u_\eps\nabla
  u_\eps\,dxdt\Big|\,\leq\,&
  \int_{\Omega_T} |\nabla\psi|\big(\eps (\partial_t u_\eps)^2 +\eps
  |\nabla u_\eps|^2\big)\,dxdt
  \notag\\
  \leq\,&
 (\Lsquares+T\Lenergy)\|\nabla\psi\|_{C^0(\overline{\Omega})} \label{eq:KRT-2}
\end{align}
and deduce from \eqref{ass:bound}, \eqref{ass:bound-squares},
\eqref{eq:KRT-1} that 
\begin{gather*}
  \int_0^T |\partial_t\mu_\eps^t(\psi)|\,dt\,\leq\,
  (\Laction+\Lsquares) \|\psi\|_{C^0(\overline{\Omega})} +
  C(\Lsquares,T\Lenergy)\|\nabla\psi\|_{C^0(\overline{\Omega})},
\end{gather*}
which proves \eqref{eq:bv-mu}.  
\end{proof}
\begin{proof}[Proof of Proposition \ref{prop:KRT}]
By \eqref{eq:conv-mu} $\mu_\eps\to\mu$ as Radon-measures on
$\overline{\Omega_T}$. 
Choose now a countable family $(\psi_i)_{i\in\N}\subset
C^1(\overline{\Omega})$ which is dense in $C^0(\overline{\Omega})$. By Lemma
\ref{lem:bv-mu} and a diagonal-sequence argument there exists a
subsequence $\eps\to 0$ and functions $m_i\in BV(0,T)$, $i\in\N$, such that
for all $i\in\N$
\begin{alignat}{2}
 \mu_\eps^t(\psi_i)\,&\to\, m_i(t)\quad&&\text{ for almost-all }t\in (0,T),
 \label{eq:conv-mi}
 \\
 \partial_t\mu_\eps^t(\psi_i)\,&\to\, m_i'\quad&&\text{ as Radon measures on
 }(0,T). \label{eq:conv-mi-2} 
\end{alignat}
Let $S$ denote the countable set of times $t\in (0,T)$ where for some $i\in\N$ the
measure $m_i'$ has an atomic part in $t$. We claim that
\eqref{eq:conv-mi} holds on $(0,T)\setminus S$. To see this
we choose a point $t\in (0,T)\setminus S$
and a sequence of points $(t_j)_{j\in\N}$ in $(0,T)\setminus S$, such that
$t_j\nearrow t$ and \eqref{eq:conv-mi} holds for all $t_j$. We then obtain
\begin{alignat}{2}
  \lim_{j\to\infty} m_i'([t_j,t])\,&=\,0 &&\text{ for all }i\in\N,
  \label{eq:atom-mi}\\ 
  \lim_{\eps\to 0} \partial_t \mu^\eps(\psi_i)([t_j,t])\,&=
  m_i'([t_j,t])\quad&&\text{ for all }i,j\in\N. \label{eq:conv-d-mi}
\end{alignat}
Moreover
\begin{align*}
  |m_i(t)-\mu_\eps^t(\psi_i)|\,&\leq\, 
  |m_i(t)-m_i(t_j)|+|m_i(t_j)-\mu_\eps^{t_j}(\psi_i)|
  +|\mu_\eps^{t_j}(\psi_i)- \mu_\eps^t(\psi_i)|\\
  &\leq\, |m_i'([t_j,t])| + |m_i(t_j)-\mu_\eps^{t_j}(\psi_i)| +
  |\partial_t \mu_\eps^t(\psi_i)([t_j,t])|
\end{align*}
Taking first $\eps\to 0$ and then $t_j\nearrow t$ we deduce by
  \eqref{eq:atom-mi}, \eqref{eq:conv-d-mi} that \eqref{eq:conv-mi} holds for all
$i\in\N$ and all $t\in (0,T)\setminus S$.
% Moreover
%\begin{gather}
%  \|m_i\|_{L^\infty(0,T)}\,\leq\, |\psi_i\|_{C^0_c(\Omega)}\Lenergy
%  \label{eq:bound-mi}
%\end{gather}

Taking now an arbitrary $t\in (0,T)$ such that \eqref{eq:conv-mi} holds,
by \eqref{ass:bound-mu} there exists a subsequence $\eps\to 0$ such that
\begin{gather}
  \mu_\eps^t\,\to\, \mu^t \quad\text{ as Radon-measures on
  }\Omega. \label{eq:conv-mu-eps-t} 
\end{gather}
We deduce that $\mu^t(\psi_i)=m_i(t)$ and since $(\psi_i)_{i\in\N}$ is
dense in $C^0(\overline{\Omega})$ we can identify any limits of
$(\mu_\eps^t)_{\eps>0}$ and obtain \eqref{eq:conv-mu-eps-t} for the
whole sequence selected in \eqref{eq:conv-mi},
 \eqref{eq:conv-mi-2} and for all $t\in (0,T)$, for which
\eqref{eq:conv-mi} holds. Moreover for any $\psi\in
C^0(\overline{\Omega})$ the map $t\mapsto \mu_\eps^t(\psi)$ has no
jumps in $(0,T)\setminus S$. After possibly taking another subsequence we
can also ensure that as $\eps\to 0$
\begin{gather*}
  \mu_\eps^0\,\to\, \mu^0,\qquad \mu_\eps^T\,\to\, \mu^T
\end{gather*}
as Radon measures on $\overline{\Omega}$. This proves \eqref{eq:conv-mu-t}.

By the Dominated Convergence Theorem we conclude that for any $\eta\in
C^0(\overline{\Omega_T})$ 
\begin{gather*}
  \int_{\Omega_T} \eta\,d\mu\,=\, \lim_{\eps\to 0}
  \int_{\Omega_T} \eta\,d\mu_\eps \,=\,  \lim_{\eps\to 0}\int_0^T
  \int_\Omega \eta(t,x)\,d\mu_\eps^t(x)\,dt\,=\, \int_0^T \int_\Omega
  \eta(t,x)\,d\mu^t(x)\,dt,
\end{gather*}
which implies \eqref{eq:disint-mu}.

By \eqref{eq:bv-mu}, the $L^1(0,T)$-compactness of sequences that are
uniformly bounded in $BV(0,T)$, the lower-semicontinuity of the
$BV$-norm under $L^1$-convergence, and \eqref{eq:conv-mu-t} we conclude
that \eqref{eq:mu-bv} holds. 
\end{proof}

\begin{proof}[Proof of Theorem \ref{the:lsc-H}]
For almost all $t\in (0,T)$ we obtain from Fatou's Lemma and
\eqref{ass:bound-squares}, \eqref{ass:bound-mu} that
\begin{gather}
  \liminf_{\eps\to 0} \Big(\mu_\eps^t(\Omega) +
  \int_\Omega
  \frac{1}{\eps}w_\eps^2(t,x)\,dx\Big)\,<\,\infty.
  \label{eq:bound-fatou}  
\end{gather}
Let $S\subset (0,T)$ be as in Proposition \ref{prop:KRT} and fix a $t\in
(0,T)\setminus S$ such that \eqref{eq:bound-fatou} 
holds. 
Then we deduce from \cite[Theorem 4.1, Theorem 5.1]{RS} and
\eqref{eq:conv-mu-t} that
\begin{align*}
  &\frac{1}{c_0}\mu^t \text{ is an integral }(n-1)\text{-varifold},\\
  &\mu^t\,\geq\, \frac{c_0}{2} |\nabla u(t,\cdot)|,
\end{align*}
and that $\mu^t$ has weak mean curvature ${H}(t,\cdot)$
satisfying
\begin{gather}
  \int_\Omega |{H}(t,x)|^2\,d\mu^t(x)\,\leq\, \liminf_{\eps\to 0}
  \frac{1}{\eps}\int_\Omega w_\eps(t,x)^2\,dx. \label{eq:lsc-H-t}
\end{gather}
By \eqref{eq:lsc-H-t} and Fatou's Lemma %, and \eqref{eq:ass-liminf} %
we
obtain that 
\begin{align*}
  \int_0^T \int_\Omega |{H}(t,x)|^2\,d\mu^t(x)\,dt \,&\leq\,
  \int_0^T \left(\liminf_{\eps\to 0}
  \frac{1}{\eps}\int_\Omega w_\eps(t,x)^2\,dx\right)\,dt \\%\,\leq\, \alpha(\Omega_T)%
  \,&\leq\, \liminf_{\eps\to 0} \frac{1}{\eps}\int_{\Omega_T}
  w_\eps^2\,dxdt,
\end{align*}
which proves \eqref{eq:lsc-H}.

For later use we also associate general varifolds to $\mu_\eps^t$ and
consider their convergence as $\eps\to 0$. Let
$\nu_\eps(t,\cdot):\Omega\to S^{n-1}_1(0)$ be an extension of $\nabla 
u_\eps(t,\cdot)/{|\nabla u_\eps(t,\cdot)|}$ to 
the set $\{\nabla u_\eps(t,\cdot)=0\}$. Define the
projections $P_\eps(t,x)\,:=\, Id - \nu_\eps(t,x)\otimes\nu_\eps(t,x)$
and consider the general varifolds $V_\eps^t$ and the integer
rectifiable varifold $c_0^{-1}V^t$ defined by
\begin{align}
  V_\eps^t(f)\,&:=\, \int_{\Omega}
  f(x,P_\eps(t,x))\,d\mu_\eps^t(x), \label{eq:def-V-eps-t}\\
  V^t(f)\,&:=\, \int_{\Omega}
  f(x,P(t,x))\,d\mu^t(x)\label{eq:def-V-t}
\end{align}
for $f\in C^0_c(\Omega\times\R^{n\times n})$, where
$P(t,x)\in\R^{n\times n}$ denotes the projection onto the 
tangential plane $T_x\mu^t$. Then we deduce from the proof of
\cite[Theorem 4.1]{RS} that
\begin{gather}
  V_\eps^t\,\to\, V^t\quad\text{ as }\eps\to 0 \label{eq:conv-V-t}
\end{gather}
in the sense of varifolds.
\end{proof}
%================================================
% proof Theorem lsc-v
%================================================
\section{Proof of Theorem \ref{the:lsc-v}}
\label{sec:proof-main}
\subsection{Equipartition of energy}
We start with a preliminary result, showing the important
\emph{equipartition of  energy}: the \emph{discrepancy measure}
\begin{gather}
  \xi_\eps\,:=\,
  \Big(\frac{\eps}{2}|\nabla u_\eps|^2-\frac{1}{\eps}W(u_\eps)\Big)\LL^{n+1}
\label{eq:def-xi-eps}
\end{gather}
vanishes in the limit $\eps\to 0$.

To prove this we combine results from \cite{RS}  with
a refined version of Lebesgue's dominated convergence Theorem
\cite{PS}, see also \cite[Lemma 4.2]{Ro}.  
\begin{proposition}
\label{prop:xi=0}
For a subsequence $\eps\to 0$ we obtain that
\begin{gather}
  |\xi_\eps|\,\to\, 0\quad\text{ as Radon measures on
   }\overline{\Omega_T}. \label{eq:zero-xi}
\end{gather}
\end{proposition}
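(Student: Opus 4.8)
The plan is to show that the total mass of the discrepancy measures $|\xi_\eps|(\overline{\Omega_T})$ stays bounded and that the limiting discrepancy vanishes pointwise in time, then upgrade this to convergence of the measures using a refined dominated convergence argument. More precisely, the first step is to recall from Remark \ref{rem:ass-KRT}, specifically \eqref{ass:bound-mu}, that $\sup_{t}\mu_\eps^t(\Omega)\le\Lenergy$, so that $|\xi_\eps|(\Omega_T)=\int_0^T|\xi_\eps^t|(\Omega)\,dt$ where $\xi_\eps^t:=(\frac{\eps}{2}|\nabla u_\eps|^2-\frac{1}{\eps}W(u_\eps))\LL^n$ satisfies $|\xi_\eps^t|(\Omega)\le\mu_\eps^t(\Omega)\le\Lenergy$. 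This gives the uniform bound $|\xi_\eps|(\Omega_T)\le T\Lenergy$ needed to make sense of the limit.

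The second and central step is the time-slice statement: for almost every $t\in(0,T)$, the time-independent discrepancy $|\xi_\eps^t|$ converges to zero as Radon measures on $\overline\Omega$. For $t\notin S$ with \eqref{eq:bound-fatou} holding (a full-measure set by Fatou, as in the proof of Theorem \ref{the:lsc-H}), we have $\mu_\eps^t\to\mu^t$ with the bound $\liminf_\eps(\mu_\eps^t(\Omega)+\frac1\eps\int_\Omega w_\eps^2)<\infty$, so \cite[Theorem 4.1, Theorem 5.1]{RS} apply at the fixed time $t$. The key point is that the cited results of R\"oger--Sch\"atzle give not only the integrality of $c_0^{-1}\mu^t$ and the mean-curvature bound, but also (as part of the proof, cf. the varifold convergence \eqref{eq:conv-V-t}) the vanishing of the discrepancy: $|\xi_\eps^t|\to0$ as Radon measures on $\overline\Omega$. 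I would invoke this directly.

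The third step converts the pointwise-in-$t$ convergence to convergence of the full space-time measures. Since $|\xi_\eps^t|(\overline\Omega)\le\Lenergy$ uniformly and $|\xi_\eps^t|(\overline\Omega)\to0$ for a.e.\ $t$, the refined version of Lebesgue's dominated convergence theorem from \cite{PS} (see also \cite[Lemma 4.2]{Ro}) yields, after passing to a further subsequence, that for every $\varphi\in C^0(\overline{\Omega_T})$, $\int_{\Omega_T}\varphi\,d|\xi_\eps|=\int_0^T\big(\int_\Omega\varphi(t,\cdot)\,d|\xi_\eps^t|\big)\,dt\to0$; here one uses that $|\int_\Omega\varphi(t,\cdot)\,d|\xi_\eps^t||\le\|\varphi\|_{C^0}\Lenergy$ together with the a.e.\ convergence of the inner integral to zero to apply dominated convergence in $t$, and the refined variant handles the fact that the a.e.-convergent subsequence may depend on $\varphi$ by a diagonalization over a countable dense family of test functions. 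This gives \eqref{eq:zero-xi}.

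The main obstacle is really the second step: extracting from \cite{RS} the precise statement that the \emph{signed} discrepancy measure — and not merely the energy measure — converges to zero at a.e.\ fixed time. This is genuinely a consequence of the equipartition results underlying \cite[Theorem 4.1]{RS} (the diffuse surface measure and the measure $\frac1\eps W(u_\eps)\LL^n$, as well as $\frac\eps2|\nabla u_\eps|^2\LL^n$, share the same limit $\mu^t$), so the work is in citing it correctly rather than reproving it; once that is in hand the remaining steps are routine measure-theoretic bookkeeping with the uniform mass bound $\Lenergy$.
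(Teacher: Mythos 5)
Your step 2 has a genuine gap. You claim that for a.e.\ $t$ the time-slice discrepancy $|\xi_\eps^t|$ converges to zero as Radon measures along the (full, or a fixed) sequence $\eps\to 0$, citing the R\"oger--Sch\"atzle results, with the justification that \eqref{eq:bound-fatou} holds a.e.\ by Fatou. But \eqref{eq:bound-fatou} is a $\liminf$ bound, whereas the relevant statement from \cite{RS} (the paper cites \cite[Proposition~4.9]{RS}, not Theorems~4.1/5.1, for the vanishing of the discrepancy) concludes $|\xi_{\eps_j}^t|\to 0$ only along subsequences $\eps_j$ for which $\limsup_j\int_\Omega\frac{1}{\eps_j}w_{\eps_j}^2(t,\cdot)\,dx<\infty$. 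Fatou gives you $\liminf_\eps\int_\Omega\frac{1}{\eps}w_\eps^2(t,\cdot)\,dx<\infty$ for a.e.\ $t$, which provides a $t$\emph{-dependent} subsequence with that property; it does not give you a $t$-independent subsequence along which the $\limsup$ is a.e.\ finite (think of a uniformly $L^1(0,T)$-bounded family $t\mapsto\int\frac1\eps w_\eps^2(t,\cdot)$ consisting of tall narrow bumps that wander in $t$). Consequently, the pointwise-in-$t$ convergence $|\xi_\eps^t|\to 0$ a.e.\ that your step 3 feeds into a dominated-convergence argument is simply not established.

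Your invocation of the refined dominated convergence theorem of \cite{PS}/\cite[Lemma~4.2]{Ro} doesn't repair this, and indeed you have mischaracterized its role: the issue it resolves here is not that ``the a.e.-convergent subsequence may depend on $\varphi$'' (a diagonalization over test functions would handle that), but precisely the lack of a.e.-in-$t$ $\limsup$ control on the diffuse Willmore energy along a fixed subsequence. The paper's proof makes this explicit by a truncation argument: it introduces the ``bad sets''
\begin{gather*}
  \mathcal{B}_{\eps,k}=\Big\{t\in(0,T):\int_\Omega\tfrac{1}{\eps}w_\eps^2(t,\cdot)\,dx>k\Big\},
\end{gather*}
of measure at most $\Lsquares/k$ by \eqref{ass:bound-squares}, and defines truncated discrepancies $\xi_{\eps,k}^t$ that vanish on $\mathcal B_{\eps,k}$. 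For fixed $k$, any subsequence along which $|\xi_{\eps,k}^t|(\eta(t,\cdot))$ fails to vanish is forced to lie in the complement of $\mathcal B_{\eps,k}$, where the $\limsup$ of $\int\frac1\eps w_\eps^2(t,\cdot)$ is $\le k$, and then \cite[Proposition~4.9]{RS} applies; this yields $|\xi_{\eps,k}^t|(\eta(t,\cdot))\to 0$ a.e.\ for the \emph{full} sequence, which together with the $\Lenergy$-domination gives $\int_0^T|\xi_{\eps,k}^t|(\eta(t,\cdot))\,dt\to 0$ by ordinary dominated convergence. The contribution on $\mathcal B_{\eps,k}$ is then controlled by $\|\eta\|_{C^0}\Lenergy\Lsquares/k$, and letting $k\to\infty$ closes the argument. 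Your steps 1 and 3 (uniform total mass bound, and integration in $t$) are fine, and the overall strategy of slicing in time and invoking the RS equipartition result is the right one, but the mechanism by which the slice-wise convergence is actually secured is the missing ingredient in your write-up.
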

\begin{proof}
Let us define the measures
\begin{align*}
  \xi_\eps^t\,&:=\,
  \Big(\frac{\eps}{2}|\nabla
  u_\eps|^2-\frac{1}{\eps}W(u_\eps)\Big)(t,\cdot)\LL^{n}
\end{align*}
on $\Omega$.
For $\eps>0$, $k\in\N$, we define the sets
\begin{gather}
  \mathcal{B}_{\eps,k}\,:=\, \{t\in (0,T) :
  \int_\Omega\frac{1}{\eps}w_\eps^2(t,x)\,dx\,>\,k\}. \label{eq:def-BEk}
\end{gather}
We then obtain from \eqref{ass:bound-squares} that
\begin{gather}
  \Lsquares \,\geq\, \int_0^T
  \int_\Omega\frac{1}{\eps}w_\eps^2(t,x)\,dxdt\,\geq\,
  |\mathcal{B}_{\eps,k}|k. \label{eq:bound-BEk}
\end{gather}
Next we define the (signed) Radon-measures $\xi_{\eps,k}^t$ by
\begin{gather}
  \xi_{\eps,k}^t \,:=\,
  \begin{cases}
    \xi_\eps^t &\text{ for }t\in (0,T)\setminus \mathcal{B}_{\eps,k},\\
    0 &\text{ for }t\in \mathcal{B}_{\eps,k}.
  \end{cases}
  \label{eq:def-xi-k}
\end{gather}
By \cite[Proposition 4.9]{RS}, we have
\begin{gather}
  |\xi_{\eps_j}^t| \,\to\, 0 \ (j\to\infty) \text{ as Radon measures on }\Omega
  \label{eq:conv-xi-t} 
\end{gather}
for any subsequence $\eps_j\to 0$ $(j\to\infty)$ such that
\begin{gather*}
  \limsup_{j\to \infty} \int_{\Omega}\frac{1}{\eps_j}w_{\eps_j}^2(t,x)\,dx
  \,<\,\infty.
\end{gather*}
By \eqref{ass:bound-mu}, \eqref{eq:def-xi-k} we deduce that for any $\eta\in
C^0(\overline{\Omega_T},\R^+_0)$, $k\in\N$, and almost all $t\in (0,T)$ 
\begin{gather}
  |\xi_{\eps,k}^t|(\eta(t,\cdot))\,\to\, 0\quad\text{ as }\eps\to 0
  \label{eq:conv-xi-k-t}
\end{gather}
and that
\begin{gather}
  |\xi_{\eps,k}^t|(\eta(t,\cdot))\,=\,
  \big(1-\Chi_{\mathcal{B}_{\eps,k}}(t)\big)|\xi_\eps^t|\big(\eta(t,\cdot)\big)\,\leq\,  
  \Lenergy\|\eta\|_{C^0(\overline{\Omega_T})}. \label{eq:dom-xi-k} 
\end{gather}
By the Dominated Convergence Theorem, \eqref{eq:conv-xi-k-t} and
\eqref{eq:dom-xi-k} imply that
\begin{gather}
  \int_0^T |\xi_{\eps,k}^t|(\eta(t,\cdot))\,dt \,\to\, 0\quad\text{ as }\eps\to
  0. \label{eq:conv-xi-k}
\end{gather}
Further we obtain that
\begin{align}
  \int_0^T |\xi_\eps^t|(\eta(t,\cdot))\,dt\,\leq\,&
  \int_0^T |\xi_{\eps,k}^t|(\eta(t,\cdot))\,dt +
  \int_{\mathcal{B}_{\eps,k}}
  |\xi_{\eps}^t|(\eta(t,\cdot))\,dt \notag\\
  \leq\,&\int_0^T|\xi_{\eps,k}^t|(\eta(t,\cdot))\,dt +
  \int_{\mathcal{B}_{\eps,k}}
  \mu_{\eps}^t(\eta(t,\cdot))\,dt. \label{eq:est-lim-xi} 
\end{align}
For $k\in\N$ fixed we deduce from \eqref{ass:bound-mu},
\eqref{eq:bound-BEk}, \eqref{eq:est-lim-xi} that 
\begin{gather}
  \limsup_{\eps\to 0}\int_0^T\int_\Omega
 |\xi_\eps^t|( \eta(t,\cdot))\,dt \,\leq\, \lim_{\eps\to 0}
  \int_0^T|\xi_{\eps,k}^t|(\eta(t,\cdot))\,dt +
  \|\eta\|_{C^0(\Omega_T)}\Lenergy\frac{\Lsquares}{k}.
\end{gather}
By \eqref{eq:conv-xi-k} and since $k\in\N$ was arbitrary this proves the
Proposition. 
\end{proof}
\subsection{Convergence of approximate velocities}
In the next step in the proof of Theorem \ref{the:lsc-v} we
define approximate velocity vectors and show their convergence as
$\eps\to 0$. 
\begin{lemma}
\label{lem:velo}
Define $v_\eps:\Omega_T\to\Rn$ by
\begin{gather}
  v_\eps\,:=\,
  \begin{cases}
    -\frac{\partial_t u_\eps}{|\nabla u_\eps|}\frac{\nabla
      u_\eps}{|\nabla u_\eps|} &\text{ if }|\nabla u_\eps|\neq 0,\\[2mm]
    0&\text{ otherwise. }
  \end{cases}
  \label{eq:def-v-eps}
\end{gather} 
Then there exists a function $v\in L^2(\mu,\Rn)$ such that
\begin{gather}
  (\eps|\nabla u_\eps|^2\,\LL^{n+1},v_\eps)\,\to\, (\mu,v)\quad\text{ as
  }\eps\to 0 \label{eq:conv-pair}
\end{gather}
in the sense of measure function pair convergence (see the Appendix
\ref{app:hutch}) and such that \eqref{eq:v-lsc} is satisfied.
\end{lemma}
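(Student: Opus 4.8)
The plan is to combine the equipartition of energy from Proposition \ref{prop:xi=0} with the abstract compactness theorem for measure--function pairs collected in Appendix \ref{app:hutch}.

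First I would identify the weak-$*$ limit of the measures $\eps|\nabla u_\eps|^2\,\LL^{n+1}$. Since
\[
  \frac{\eps}{2}|\nabla u_\eps|^2\,\LL^{n+1}\,=\,\frac12\big(\mu_\eps+\xi_\eps\big),
\]
the convergence $\mu_\eps\to\mu$ from \eqref{eq:conv-mu} together with $|\xi_\eps|\to 0$ from Proposition \ref{prop:xi=0} yields $\eps|\nabla u_\eps|^2\,\LL^{n+1}\to\mu$ as Radon measures on $\overline{\Omega_T}$ (along the subsequence already fixed). Next I would record the uniform bound underlying the compactness: on the open set $\{|\nabla u_\eps|\neq 0\}$ one has $|v_\eps|^2=(\partial_t u_\eps)^2/|\nabla u_\eps|^2$, while on its complement both $v_\eps$ and $\eps|\nabla u_\eps|^2$ vanish, so that
\[
  \int_{\Omega_T}|v_\eps|^2\,\eps|\nabla u_\eps|^2\,dx\,dt
  \,=\,\int_{\{|\nabla u_\eps|\neq 0\}}\eps(\partial_t u_\eps)^2\,dx\,dt
  \,\leq\,\int_{\Omega_T}\eps(\partial_t u_\eps)^2\,dx\,dt\,\leq\,\Lsquares,
\]
using \eqref{ass:bound-squares}.

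With $p=2>1$, these two facts place us exactly in the setting of Hutchinson's compactness theorem for measure--function pairs: I would extract a further subsequence $\eps\to 0$ and a function $v\in L^2(\mu,\Rn)$ such that
\[
  \big(\eps|\nabla u_\eps|^2\,\LL^{n+1},\,v_\eps\big)\,\to\,(\mu,v)
\]
in the sense of measure--function pair convergence. Having passed beforehand to a subsequence realizing $\liminf_{\eps\to 0}\int_{\Omega_T}\eps(\partial_t u_\eps)^2\,dx\,dt$ as a limit, the lower semicontinuity of $w\mapsto\int|w|^2\,d(\cdot\,)$ along converging measure--function pairs then gives
\[
  \int_{\Omega_T}|v|^2\,d\mu\,\leq\,\liminf_{\eps\to 0}\int_{\Omega_T}|v_\eps|^2\,\eps|\nabla u_\eps|^2\,dx\,dt
  \,\leq\,\liminf_{\eps\to 0}\int_{\Omega_T}\eps(\partial_t u_\eps)^2\,dx\,dt,
\]
which is \eqref{eq:v-lsc}.

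The step I expect to be the most delicate is not a deep one: it is the bookkeeping on the degenerate set $\{|\nabla u_\eps|=0\}$ (so that the displayed identity for $\int|v_\eps|^2\,\eps|\nabla u_\eps|^2$ is correct and no mass is lost) together with the verification that the approximating measures converge to precisely $\mu$ and not to a proportion of it --- both of which rest squarely on the equipartition Proposition \ref{prop:xi=0}. Once these are in place the conclusion is a direct application of the measure--function pair machinery; the perpendicularity of $v$ to the approximate tangent planes and the $L^2$-flow property are not needed here and are established afterwards in the remainder of the proof of Theorem \ref{the:lsc-v}.
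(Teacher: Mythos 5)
Your proposal is correct and follows essentially the same route as the paper: identify $\eps|\nabla u_\eps|^2\,\LL^{n+1}=\mu_\eps+\xi_\eps\to\mu$ via the equipartition Proposition \ref{prop:xi=0}, observe $\int|v_\eps|^2\,d\tilde\mu_\eps\leq\int\eps(\partial_t u_\eps)^2\leq\Lsquares$, and invoke Theorem \ref{the:hutch} for compactness and lower semicontinuity of measure--function pairs. Your extra remark about first passing to a subsequence realizing the $\liminf$ is a small but sound refinement of the bookkeeping the paper leaves implicit.
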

\begin{proof}
We define Radon measures
\begin{gather}
  \tilde{\mu}_\eps\,:=\, \eps|\nabla u_\eps|^2\,\LL^{n+1}\,=\, \mu_\eps
  +\xi_\eps. \label{eq:def-tilde-mu}
\end{gather}
From \eqref{eq:conv-mu}, \eqref{eq:zero-xi} we deduce that
\begin{gather}
  \tilde{\mu}_\eps \,\to\, \mu\quad \text{ as Radon measures on
  }\overline{\Omega_T}. \label{eq:conv-tilde-mu} 
\end{gather}
Next we observe that $(\tilde{\mu}_\eps, v_\eps)$ is a function-measure
pair in the sense of \cite{Hu} (see also
Definition \ref{def:meas-funct-pair} in Appendix B)
and that by \eqref{ass:bound-squares}
\begin{gather}
  \int_{\Omega_T} |v_\eps|^2\,d\tilde{\mu}_\eps \,\leq\, \int_{\Omega_T}
  \eps (\partial_t u_\eps)^2\,dxdt\,\leq\,
  \Lsquares. \label{eq:bound-v-eps} 
\end{gather}
By Theorem \ref{the:hutch} we therefore deduce that there
exists a subsequence 
$\eps\to 0$ and a function $v\in L^2(\mu,\Rn)$ such that
\eqref{eq:conv-pair} and \eqref{eq:v-lsc} hold.
\end{proof}
%
%=======================================================
%
\begin{lemma}\label{lem:perp-v}
For $\mu$-almost all $(t,x)\in\Omega_T$
\begin{gather}
  v(t,x)\,\perp\, T_x\mu^t. \label{eq:perp-v}
\end{gather}
\end{lemma}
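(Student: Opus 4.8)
The plan is to show that $v(t,x)$ is orthogonal to the approximate tangent plane $T_x\mu^t$ by testing against tangential vector fields and using the structure of the approximate velocity $v_\eps$, which by its very definition \eqref{eq:def-v-eps} is always parallel to $\nabla u_\eps$, hence \emph{normal} to the level sets of $u_\eps$. Concretely, for $\mu$-almost every $(t,x)$ the measure $\frac1{c_0}\mu^t$ has an approximate tangent plane $T_x\mu^t$ (Theorem \ref{the:lsc-H}), and the varifold convergence \eqref{eq:conv-V-t} tells us that the diffuse normals $\nu_\eps(t,\cdot)$ encode, in the limit, the projection $P(t,x)$ onto $T_x\mu^t$. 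First I would fix a smooth vector field $X\in C^0_c(\Omega_T,\R^n)$ and consider the tangential component, i.e.\ test the measure-function pair $(\tilde\mu_\eps,v_\eps)$ against $P_\eps X$ rather than against $X$. Since $v_\eps$ points in the direction $\nabla u_\eps/|\nabla u_\eps|$, and $P_\eps(t,x)=\mathrm{Id}-\nu_\eps\otimes\nu_\eps$ annihilates exactly that direction, we get $v_\eps\cdot(P_\eps X)=0$ pointwise $\tilde\mu_\eps$-almost everywhere.

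The key steps, in order: (i) record that $\int_{\Omega_T} v_\eps\cdot(P_\eps X)\,d\tilde\mu_\eps=0$ for all $\eps>0$; (ii) pass to the limit on the left-hand side. For the passage to the limit I would use the measure-function pair convergence \eqref{eq:conv-pair}, $(\tilde\mu_\eps,v_\eps)\to(\mu,v)$, together with the varifold convergence \eqref{eq:conv-V-t} which gives $P_\eps(t,x)\,d\mu_\eps^t\to P(t,x)\,d\mu^t$; since $\tilde\mu_\eps$ and $\mu_\eps$ have the same limit $\mu$ by \eqref{eq:conv-tilde-mu} and equipartition (Proposition \ref{prop:xi=0}), and since $X$ is continuous and compactly supported, the product $P_\eps X$ converges to $PX$ in the appropriate sense against these measures. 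The limit of (i) is therefore $\int_{\Omega_T} v\cdot(P X)\,d\mu=\int_{\Omega_T}(P v)\cdot X\,d\mu=0$. As $X$ ranges over a countable dense family in $C^0_c(\Omega_T,\R^n)$ this forces $P(t,x)v(t,x)=0$ for $\mu$-a.e.\ $(t,x)$, which is precisely \eqref{eq:perp-v}.

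The main obstacle I anticipate is the joint passage to the limit in step (ii): one has a product of two quantities, the vector field $v_\eps$ (controlled only weakly, in the measure-function pair sense, and only in $L^2(\tilde\mu_\eps)$) and the projection $P_\eps$ (controlled only in the varifold sense). Neither converges strongly, so the product does not obviously pass to the limit. The resolution is that $P_\eps$ is built from $\nu_\eps=\nabla u_\eps/|\nabla u_\eps|$, which is exactly the direction of $v_\eps$; so one should not try to take the limit of the product of two independent weak limits, but rather exploit that $P_\eps X$ is, up to the bounded continuous weight coming from $X$, a ``tangential'' test object whose behaviour against $\tilde\mu_\eps$ is governed by \eqref{eq:conv-V-t}. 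The careful point is to check that the general-varifold convergence \eqref{eq:conv-V-t} of $V_\eps^t$ to $V^t$ (which is stated for fixed $t$ off a null set) integrates up in $t$ to give convergence of $L^1\otimes V_\eps^t$ to $L^1\otimes V^t$; this follows from the uniform mass bound \eqref{ass:bound-mu} and dominated convergence, exactly as in the proof of Proposition \ref{prop:KRT}. With that in hand the identity $\int v\cdot(PX)\,d\mu=0$ follows, and since $P$ is the orthogonal projection onto $T_x\mu^t$ we conclude $v\perp T_x\mu^t$ $\mu$-a.e.
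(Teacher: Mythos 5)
Your strategy is the right one and your diagnosis of the obstacle is accurate: you want to pass $\int_{\Omega_T} v_\eps\cdot(P_\eps X)\,d\tilde{\mu}_\eps = 0$ to the limit, but the test field $P_\eps X$ depends on $\eps$, so neither the measure-function pair convergence \eqref{eq:conv-pair} (which only tests $(\tilde{\mu}_\eps,v_\eps)$ against a \emph{fixed} continuous field) nor the varifold convergence \eqref{eq:conv-V-t} (which tests $V_\eps^t$ against a \emph{fixed} $f(x,Y)$, with no $v_\eps$ present) allows you to conclude that the limit is $\int v\cdot(PX)\,d\mu$. The resolution you offer --- that $P_\eps X$ is a ``tangential test object whose behaviour against $\tilde\mu_\eps$ is governed by \eqref{eq:conv-V-t}'' --- restates the difficulty rather than resolving it: you face a weak--weak pairing, and nothing you cite supplies the strong convergence of $P_\eps X$ (relative to the moving measures) that a weak--strong pairing would require. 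As written, step (ii) has a genuine gap.

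The ingredient that closes the gap, and which is the actual content of the paper's proof (following Moser), is a \emph{lifting}. One works on the enlarged space $\Omega_T\times\R^{n\times n}$ with the general varifolds $\tilde{V}_\eps$ of \eqref{eq:def-V-eps} and lifts $v_\eps$ trivially to $\hat{v}_\eps(t,x,Y):=v_\eps(t,x)$. Since $\int|\hat{v}_\eps|^2\,d\tilde{V}_\eps=\int|v_\eps|^2\,d\tilde{\mu}_\eps\leq\Lsquares$, Theorem \ref{the:hutch} can be applied \emph{on the lifted space}, giving a subsequence with $(\tilde{V}_\eps,\hat{v}_\eps)\to(V,\hat{v})$ as measure-function pairs on $\Omega_T\times\R^{n\times n}$. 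The crucial effect of the lift is that $\int_{\Omega_T}\eta\cdot P_\eps v_\eps\,d\tilde{\mu}_\eps$ becomes the pairing of $\hat{v}_\eps$ against the test field $(t,x,Y)\mapsto h(Y)\,Y\,\eta(t,x)$ (with $h\in C^0_c(\R^{n\times n})$, $h\equiv 1$ on projections), which is a \emph{fixed}, bounded, continuous function on $\Omega_T\times\R^{n\times n}$, independent of $\eps$. The lifted measure-function pair convergence therefore applies directly and yields $\lim_{\eps\to 0}\int_{\Omega_T}\eta\cdot P_\eps v_\eps\,d\tilde{\mu}_\eps=\int_{\Omega_T}\eta(t,x)\cdot P(t,x)\hat{v}(t,x,P(t,x))\,d\mu(t,x)$. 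A separate test with $h(Y)\eta(t,x)$ (no $Y$ factor) identifies $\hat{v}(t,x,P(t,x))=v(t,x)$ for $\mu$-a.e.\ $(t,x)$, so the limit equals $\int\eta\cdot Pv\,d\mu$, and since the left side is identically zero this gives \eqref{eq:perp-v}. Absorbing the $\eps$-dependent projection into the \emph{measure} via this lift, rather than leaving it in the test function, is exactly what your proposal is missing.
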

\begin{proof}
We follow \cite[Proposition 3.2]{Mose01}. Let $\nu_\eps:\Omega_T\to
S^{n-1}_1(0)$ be an extension of $\nabla u_\eps/{|\nabla u_\eps|}$ to
the set $\{\nabla u_\eps=0\}$ and define 
projection-valued maps $P_\eps:\Omega_T\,\to\, \R^{n\times n}$,
\begin{gather*}
  P_\eps\,:=\, Id - \nu_\eps\otimes\nu_\eps.
\end{gather*}
Consider next the general varifolds $\tilde{V}_\eps,V$ defined by
\begin{align}
  \tilde{V}_\eps(f)\,&:=\, \int_{\Omega_T}
  f(t,x,P_\eps(t,x))\,d\tilde{\mu}_\eps(t,x), \label{eq:def-V-eps} \\
  V(f)\,&:=\, \int_{\Omega_T}
  f(t,x,P(t,x))\,d\mu^t(x) \label{eq:def-V} 
\end{align}
for $f\in C^0_c(\Omega_T\times\R^{n\times n})$, where
$P(t,x)\in\R^{n\times n}$ denotes the projection onto the 
tangential plane $T_x\mu^t$.

From \eqref{eq:conv-V-t}, Proposition
\ref{prop:xi=0}, and Lebesgue's Dominated Convergence
Theorem we deduce that
\begin{gather}
  \lim_{\eps\to 0} \tilde{V}_\eps \,=\, V \label{eq:conv-V}
\end{gather}
as Radon-measures on $\Omega_T\times\R^{n\times n}$.
%
%Let us remark that if we consider the sequence of measure-function
%pairs $(\widetilde\mu_\eps,P_\eps)$, as $P_\eps$ are projections and 
%by \eqref{eq:conv-V} we can conclude that  $(\widetilde\mu_\eps,P_\eps)$
%converge 
%$\vert\cdot\vert^2$-strongly to $(\mu,P)$ in the sense of Definition
%\ref{def:strong-conv-meas-funct-pairs}. 

Next we define functions $\hat{v}_\eps$ on $\Omega_T\times\R^{n\times n}$ 
by
\begin{gather*}
  \hat{v}_\eps(t,x,Y)\,=\, v_\eps(t,x)\quad\text{ for all
  }(t,x)\in\Omega_T,\, Y\in\R^{n\times n}.
\end{gather*}
We then observe that
\begin{gather*}
  \int_{\Omega_T\times\R^{n\times n}} \hat{v}_\eps^2\,dV_\eps\,=\,
  \int_{\Omega_T} v_\eps^2\,d\tilde{\mu}_\eps\,\leq\, \Lsquares
\end{gather*}
and deduce from \eqref{eq:conv-V} and Theorem \ref{the:hutch} the
existence of $\hat{v}\in L^2(V,\Rn)$ such that $(V_\eps,\hat{v}_\eps)$
converge to $(V,\hat{v})$ as measure-function pairs on
$\Omega_T\times\R^{n\times n}$ with values in $\Rn$.

We consider now $h\in
C^0_c(\R^{n\times n})$ such that $h(Y)=1$ for all projections $Y$.
We deduce 
that for any $\eta\in C^0_c(\Omega_T,\Rn)$
\begin{align*}
  \int_{\Omega_T} \eta\cdot v\,d\mu\,&=\, \lim_{\eps\to 0}
  \int_{\Omega_T\times\R^{n\times n}} \eta(t,x)\cdot h(Y)\hat{v}_\eps(t,x,Y)\,
  dV_\eps(t,x,Y)\\ 
  &= \int_{\Omega_T}
  \eta(t,x)\cdot\hat{v}(t,x,P(t,x))\,d\mu(t,x),
\end{align*}
which shows that for $\mu$-almost all $(t,x)\in\Omega_T$
\begin{gather}
  \hat{v}(t,x,P(t,x))\,=\, v(t,x). \label{eq:hat-v}
\end{gather}
Finally we observe that for $h,\eta$ as above
\begin{align*}
  &\int_{\Omega_T} \eta(t,x)\cdot P(t,x)v(t,x)\,d\mu(t,x)\\
  =\,&\int_{\Omega_T\times\R^{n\times n}} \eta(t,x)h(Y)\cdot Y
  \hat{v}(t,x,Y)\,dV(t,x,Y)\\ 
  =\,&\lim_{\eps\to 0}
  \int_{\Omega_T\times\R^{n\times n}}
  \eta(t,x)h(Y)\cdot Y\hat{v}_\eps(t,x,Y)\,dV_\eps(t,x,Y)\\ 
  =\,&\lim_{\eps\to 0}\int_{\Omega_T} \eta(t,x)\cdot
  P_\eps(t,x)v_\eps(t,x)\,d\tilde{\mu}_\eps(t,x)\,=\, 0
\end{align*}
since $P_\eps v_\eps=0$. This shows that $P(t,x)v(t,x)=0$ for
$\mu$-almost all $(t,x)\in\Omega_T$.
\end{proof}
\begin{proof}[Proof of Theorem \ref{the:lsc-v}]
By \eqref{ass:bound-squares} there exists a subsequence $\eps\to 0$ and
a Radon measure $\beta$ on $\overline{\Omega}_T$ 
such that 
\begin{gather}
  \big(\eps(\partial_t u_\eps)^2 +
  \frac{1}{\eps}w_\eps^2\big)\LL^{n+1}\,\to\, \beta,\qquad
  \beta(\overline{\Omega_T})\,\leq\, \Lsquares. \label{eq:conv-beta}
\end{gather}
Using \eqref{eq:neumann} we compute that for any $\eta\in
C^1_c((0,T)\times\overline{\Omega})$ 
\begin{equation}
 \begin{split}
  \int_{\Omega_T} \eta\,d\alpha_\eps\,=&\, \int_{\Omega_T}\eta
  \big(\eps(\partial_t u_\eps)^2 + 
  \frac{1}{\eps}w_\eps^2\big)\,dxdt -2\int_{\Omega_T}
  \partial_t\eta\,d\mu_\eps 
  \\
  +& 2\int_{\Omega_T}
  \eps\nabla\eta\cdot\partial_t u_\eps\nabla
  u_\eps\,dxdt\,. \label{eq:KRT-1-rep}
\end{split}
\end{equation}
As $\eps$ tends to zero the term on the left-hand side and the first two
terms on the right-hand-side converge by \eqref{eq:conv-mu},
\eqref{eq:conv-alpha} and \eqref{eq:conv-beta}.
For the third term on the right-hand side of \eqref{eq:KRT-1-rep}
we obtain from \eqref{eq:conv-pair} that
\begin{align*}
  \lim_{\eps\to 0}\int_{\Omega_T}
  \nabla\eta \cdot\eps \partial_t u_\eps\nabla u_\eps\,dxdt
  \,&=\, -\lim_{\eps\to 0}\int_{\Omega_T}
  \nabla\eta \cdot v_\eps \,\eps |\nabla u_\eps|^2\,dxdt\notag
  \\
  \,&=\,- \int_{\Omega_T}
  \nabla\eta \cdot v\,d\mu.
\end{align*}
Therefore, taking $\eps\to 0$ in \eqref{eq:KRT-1-rep} we deduce that
\begin{gather*}
  \int_{\Omega_T} \eta\,d\alpha\,=\, \int_{\Omega_T} \eta\,d\beta -2
  \int_{\Omega_T} \partial_t\eta\,d\mu - 2\int_{\Omega_T}
  \nabla\eta\cdot v \,d\mu
\end{gather*}
holds for all $\eta\in C^1_c((0,T)\times\overline{\Omega})$.
This yields that
\begin{gather*}
  \Big| \int_{\Omega_T} \partial_t\eta + \nabla\eta\cdot
  v\,d\mu\Big|\,\leq\,
  \|\eta\|_{C^0(\overline{\Omega_T})}\frac{1}{2}\big(\alpha(\overline{\Omega_T}) +
  \beta(\overline{\Omega_T})\big),
\end{gather*}
which shows together with \eqref{eq:perp-v} that $v$ is a generalized
velocity vector for $(\mu^t)_{t\in (0,T)}$ in the sense of Definition
\ref{def:gen-velo}. The estimate \eqref{eq:v-lsc} was already proved in
Lemma \ref{lem:velo}. 
\end{proof}
%===================================================
% Proof main theorem 
%===================================================
\section{Proof of Theorem \ref{the:main}}
We start with the convergence of a `diffuse mean
curvature term'.
\begin{lemma}\label{lem:first-var}
Define
\begin{gather*}
  H_\eps\,:=\, \frac{1}{\eps}w_\eps\frac{\nabla u_\eps}{|\nabla
  u_\eps|^2},
\end{gather*}
let $\tilde{\mu}_\eps=\eps|\nabla u_\eps|^2\,\LL^{n+1}$, and let
$v_\eps,\, v$ be as in \eqref{eq:def-v-eps}, \eqref{eq:conv-pair}. 
Then 
\begin{align}
  (\tilde{\mu}_\eps,H_\eps)\,&\to\, (\mu,H), \label{eq:conv-H-eps}\\
  (\tilde{\mu}_\eps,v_\eps-H_\eps)\,&\to\, (\mu,v-H)
  \label{eq:conv-v+H}
\end{align}
as $\eps\to 0$ in the sense of measure function pair convergence.
In particular
\begin{gather}
 \int_{\Omega_T} \eta |v-H|^2\,d\mu \,\leq\,
 \alpha(\eta) \label{eq:est-v+H} 
\end{gather}
holds for all $\eta\in C^0(\overline{\Omega_T},\R^+_0)$.
\end{lemma}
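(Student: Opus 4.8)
The plan is to establish \eqref{eq:conv-H-eps} first, then deduce \eqref{eq:conv-v+H} by linearity of measure-function pair convergence, and finally obtain \eqref{eq:est-v+H} from the lower-semicontinuity property of such pairs. For \eqref{eq:conv-H-eps}, I would first verify that $(\tilde\mu_\eps,H_\eps)$ is indeed a measure-function pair with uniformly bounded $L^2$-norm: since $|H_\eps|^2\tilde\mu_\eps = \frac{1}{\eps^2}w_\eps^2\frac{|\nabla u_\eps|^2}{|\nabla u_\eps|^4}\cdot\eps|\nabla u_\eps|^2 = \frac{1}{\eps}w_\eps^2\,\LL^{n+1}$ on $\{\nabla u_\eps\neq 0\}$, the bound \eqref{ass:bound-squares} gives $\int_{\Omega_T}|H_\eps|^2\,d\tilde\mu_\eps\leq\Lsquares$ (with the convention $H_\eps=0$ where $\nabla u_\eps=0$, which is consistent with $w_\eps=0$ there up to the Modica--Mortola discrepancy; one should be slightly careful here and perhaps define $H_\eps$ with the extension $\nu_\eps$ as in the proof of Lemma \ref{lem:perp-v}). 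By Theorem \ref{the:hutch} there is a subsequence along which $(\tilde\mu_\eps,H_\eps)$ converges to some pair $(\mu,\tilde H)$ with $\tilde H\in L^2(\mu,\Rn)$; the task is to identify $\tilde H$ with the weak mean curvature $H$ of Theorem \ref{the:lsc-H}.

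The identification of $\tilde H$ with $H$ is the main obstacle, and I would carry it out via the first variation of the varifolds $\tilde V_\eps$ from \eqref{eq:def-V-eps}. The key computation is the diffuse first-variation identity: for $X\in C^1_c(\Omega_T,\Rn)$, an integration by parts using $-\eps\Delta u_\eps + \frac1\eps W'(u_\eps) = w_\eps$ yields
\begin{gather*}
  \int_{\Omega_T} \nabla X : P_\eps \,d\tilde\mu_\eps \,=\, -\int_{\Omega_T} \frac{1}{\eps}w_\eps\,\nabla u_\eps\cdot X\,dx\,dt \,+\, (\text{discrepancy terms}),
\end{gather*}
where the discrepancy terms involve $\xi_\eps$ and hence vanish as $\eps\to 0$ by Proposition \ref{prop:xi=0}. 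The right-hand side equals $-\int_{\Omega_T} H_\eps\cdot X\,d\tilde\mu_\eps$, which by the measure-function pair convergence $(\tilde\mu_\eps,H_\eps)\to(\mu,\tilde H)$ converges to $-\int_{\Omega_T}\tilde H\cdot X\,d\mu$. On the left-hand side, using $\tilde V_\eps\to V$ as varifolds (established in \eqref{eq:conv-V} via \eqref{eq:conv-V-t}), the term converges to $\int_{\Omega_T}\nabla X:P\,d\mu$, i.e. the first variation of $V$. But by Theorem \ref{the:lsc-H} the varifold $c_0^{-1}V^t$ has weak mean curvature $H(t,\cdot)$ for a.e.\ $t$, so $\int_{\Omega_T}\nabla X:P\,d\mu = -\int_{\Omega_T}H\cdot X\,d\mu$. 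Comparing, $\int_{\Omega_T}\tilde H\cdot X\,d\mu = \int_{\Omega_T}H\cdot X\,d\mu$ for all test vector fields $X$, hence $\tilde H = H$ $\mu$-a.e., proving \eqref{eq:conv-H-eps}. (One subtlety: the integrability of the first variation and the passage to the limit require care when the mean curvature has only $L^2$ bounds rather than $L^\infty$; this is handled exactly as in \cite{RS}.)

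Given \eqref{eq:conv-H-eps} and the already-established \eqref{eq:conv-pair} from Lemma \ref{lem:velo}, the difference convergence \eqref{eq:conv-v+H} follows because measure-function pair convergence with a fixed underlying sequence of measures $\tilde\mu_\eps\to\mu$ is linear in the function component: $v_\eps - H_\eps$ converges to $v-H$ as a pair against $\tilde\mu_\eps\to\mu$, and the uniform $L^2$-bound on $v_\eps-H_\eps$ follows from those on $v_\eps$ and $H_\eps$ by the triangle inequality in $L^2(\tilde\mu_\eps)$. Finally, for \eqref{eq:est-v+H}, I would recall the key structural point: by the identity \eqref{eq:KRT-1-rep} and the computation in the proof of Theorem \ref{the:lsc-v}, the limit measure $\alpha$ satisfies, in the sense of measures, $d\alpha \geq |v - H|^2\,d\mu$ — more precisely, one compares $\alpha_\eps = (\eps^{1/2}\partial_t u_\eps + \eps^{-1/2}w_\eps)^2\LL^{n+1}$ with $|v_\eps - H_\eps|^2\tilde\mu_\eps$, noting that $(\eps^{1/2}\partial_t u_\eps + \eps^{-1/2}w_\eps)^2 = \eps|\nabla u_\eps|^2|v_\eps - H_\eps|^2$ on $\{\nabla u_\eps\neq 0\}$ (since $v_\eps = -\frac{\partial_t u_\eps}{|\nabla u_\eps|}\frac{\nabla u_\eps}{|\nabla u_\eps|}$ and $H_\eps = \frac{1}{\eps}w_\eps\frac{\nabla u_\eps}{|\nabla u_\eps|^2}$ are parallel, with signed magnitudes $-\partial_t u_\eps/|\nabla u_\eps|$ and $\frac{w_\eps}{\eps|\nabla u_\eps|}$ respectively, so their difference has magnitude $\frac{1}{\sqrt\eps|\nabla u_\eps|}|\sqrt\eps\partial_t u_\eps + \eps^{-1/2}w_\eps|$ — a short algebraic check). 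Thus $\alpha_\eps \geq |v_\eps - H_\eps|^2\tilde\mu_\eps$ wherever $\nabla u_\eps\neq 0$, and in fact with equality on that set; since on $\{\nabla u_\eps = 0\}$ one has $|\nabla u_\eps|^2 = 0$ so $\tilde\mu_\eps$ puts no mass there, the inequality $\alpha_\eps \geq |v_\eps-H_\eps|^2\tilde\mu_\eps$ holds as measures on all of $\Omega_T$. Then for $\eta\in C^0(\overline{\Omega_T},\R^+_0)$, lower semicontinuity of the map $(\nu,g)\mapsto\int\eta|g|^2\,d\nu$ under measure-function pair convergence (Theorem \ref{the:hutch}) applied to \eqref{eq:conv-v+H} gives $\int_{\Omega_T}\eta|v-H|^2\,d\mu \leq \liminf_{\eps\to 0}\int_{\Omega_T}\eta|v_\eps-H_\eps|^2\,d\tilde\mu_\eps \leq \liminf_{\eps\to 0}\int_{\Omega_T}\eta\,d\alpha_\eps = \alpha(\eta)$, where the last equality uses $\alpha_\eps\to\alpha$ and $\eta\in C^0(\overline{\Omega_T})$. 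This completes \eqref{eq:est-v+H}.
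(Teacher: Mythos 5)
Your proof is correct, but it takes a genuinely different route from the paper's for the central convergence \eqref{eq:conv-H-eps}. The paper proves it by a truncation argument: it defines the ``bad'' sets $\mathcal{B}_{\eps,k}=\{t:\int_\Omega\frac{1}{\eps}w_\eps^2(t,\cdot)>k\}$ and a truncated family of linear functionals $T^t_{\eps,k}$ equal to $\int\psi\cdot w_\eps\nabla u_\eps\,dx$ off $\mathcal{B}_{\eps,k}$ and to $\int\psi\cdot H\,d\mu^t$ on it; using \cite[Prop.~4.10]{RS} and the slicewise varifold convergence \eqref{eq:conv-V-t} it gets pointwise-in-$t$ convergence of $T^t_{\eps,k}$, applies Dominated Convergence (the truncation provides an $L^1(0,T)$-dominating function), and then bounds the contribution of $\mathcal{B}_{\eps,k}$ by $O(k^{-1/2})$ via Cauchy--Schwarz. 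You instead invoke Hutchinson compactness directly on $(\tilde\mu_\eps,H_\eps)$ to extract a limit $\tilde H\in L^2(\mu)$, and then identify $\tilde H=H$ through the space-time first-variation identity $\int\nabla X:P_\eps\,d\tilde\mu_\eps = -\int H_\eps\cdot X\,d\tilde\mu_\eps + \int\dive X\,d\xi_\eps$, together with $\xi_\eps\to 0$ and the space-time varifold convergence \eqref{eq:conv-V}. This is cleaner conceptually: it avoids the $k$-bookkeeping entirely and replaces the $L^1(0,T)$ domination argument by a one-shot compactness-plus-uniqueness-of-limit argument. The paper's route, in exchange, never needs the global first-variation identity on $\Omega_T$, only its time-sliced form already provided by \cite{RS}, so it imports slightly less machinery.

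Two small repairs to your write-up. First, there is an extra factor $1/\eps$ in your displayed first-variation formula: the correct integration by parts gives
\begin{gather*}
\int_{\Omega_T}\nabla X:P_\eps\,d\tilde\mu_\eps \,=\, -\int_{\Omega_T} w_\eps\,\nabla u_\eps\cdot X\,dx\,dt \,+\, \int_{\Omega_T}\dive X\,d\xi_\eps,
\end{gather*}
and then $-\int w_\eps\nabla u_\eps\cdot X\,dxdt = -\int H_\eps\cdot X\,d\tilde\mu_\eps$ (no $1/\eps$); your stated factor would break the dimensional bookkeeping but does not affect the subsequent chain of equalities you actually use. Second, the lower-semicontinuity step for \eqref{eq:est-v+H} is not quite ``apply Theorem \ref{the:hutch} to the map $(\nu,g)\mapsto\int\eta|g|^2\,d\nu$'': the theorem as stated has no weight $\eta$, so you should (as the paper does) absorb it into the function component and apply the theorem to the pair $(\tilde\mu_\eps,\sqrt\eta(v_\eps-H_\eps))\to(\mu,\sqrt\eta(v-H))$ with $F(y)=|y|^2$. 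Your algebraic observation that $\alpha_\eps=|v_\eps-H_\eps|^2\tilde\mu_\eps$ on $\{\nabla u_\eps\neq 0\}$, and the trivial inequality on $\{\nabla u_\eps=0\}$, is exactly right and is the structural reason the bound closes.
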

\begin{proof}
We use similar arguments as in the proof of Proposition \ref{prop:xi=0}.
For $\eps>0$, $k\in\N$, we define sets
\begin{gather}
  \mathcal{B}_{\eps,k}\,:=\, \{t\in (0,T) :
  \int_\Omega \frac{1}{\eps}w_\eps(t,x)^2\,dx\,>\,k\}. \label{eq:def-BEk-2}
\end{gather}
We then obtain from \eqref{ass:bound-squares} that
\begin{gather}
  \Lsquares\,\geq\, \int_{\Omega_T}
  \frac{1}{\eps}w_\eps^2\,dxdt\,\geq\,
  |\mathcal{B}_{\eps,k}|k. \label{eq:bound-BEk-2}
\end{gather}
Next we define functionals $T_{\eps,k}^t\in C^0_c(\Omega,\Rn)^*$ by
\begin{gather}
  T_{\eps,k}^t(\psi) \,:=\,
  \begin{cases}
    \int_\Omega \psi(x)\cdot w_\eps(t,x)\nabla u_\eps(t,x)\,dx &\text{ for }t\in
    (0,T)\setminus \mathcal{B}_{\eps,k},\\ 
    \int_\Omega \psi(x)\cdot H(t,x)\,d\mu^t(x) &\text{ for }t\in \mathcal{B}_{\eps,k}.
  \end{cases}
  \label{eq:def-xi-k-2}
\end{gather}
Considering the general $(n-1)$-varifolds $V_\eps^t,V^t$ defined in
\eqref{eq:def-V-eps-t}, \eqref{eq:def-V-t} we obtain from
\cite[Proposition 4.10]{RS} and \eqref{eq:conv-V-t} that
\begin{equation}
\begin{split}
  \lim_{j\to\infty} \int_\Omega \psi\cdot w_{\eps_j}(t,x)\nabla
  u_{\eps_j}(t,x)\,dx 
  \,=&\, -\lim_{j\to\infty} \delta V_{\eps_j}^t(\psi)\,
  \\
  =&\, -\delta\mu^t(\psi)
  \,=\, \int_\Omega \psi\cdot H(t,x)\,d\mu^t(x) \label{eq:conv-xi-t-2} 
\end{split}
\end{equation}
for any subsequence $\eps_j\to 0$ $(j\to\infty)$ such that
\begin{gather*}
  \limsup_{j\to \infty} \int_{\Omega}
  \frac{1}{\eps_j}w_{\eps_j}^2\,dxdt\,<\,\infty.
\end{gather*}
Therefore we deduce from \eqref{eq:def-xi-k-2}, \eqref{eq:conv-xi-t-2}
that for all $\eta\in C^0_c(\Omega_T,\Rn)$, $k\in\N$, and almost all
$t\in (0,T)$  
\begin{gather}
  T_{\eps,k}^t(\eta(t,\cdot))\,\to\, \int_\Omega \eta(t,x)\cdot
  H(t,x)\,d\mu^t(x)\quad\text{ as }\eps\to 0 
  \label{eq:conv-xi-k-t-2}
\end{gather}
and that
\begin{align}
  &\big|T_{\eps,k}^t(\eta(t,\cdot))\big|\notag\\
  \leq\,&
  \big(1-\Chi_{\mathcal{B}_{\eps,k}}(t)\big)\left\vert\int_\Omega
  \eta(t,x)\cdot w_\eps(t,x)\nabla
  u_\eps(t,x)\,dx\right\vert\notag\\
  & +\Chi_{\mathcal{B}_{\eps,k}}(t)\left\vert\int_\Omega \eta(t,x)\cdot
  H(t,x)\,d\mu^t(x)\right\vert\notag\\ 
  \leq\,&
  \|\eta\|_{C^0(\Omega_T)}\big(1-\Chi_{\mathcal{B}_{\eps,k}}(t)\big)\Big(\int_\Omega
  \frac{1}{2\eps} 
  w_\eps(t,x)^2\,dx\Big)^{1/2} \Big(\int_\Omega
  \frac{\eps}{2}|\nabla u_\eps(t,x)|^2\,dx\Big)^{1/2}\notag\\
  &+\int_\Omega |\eta(t,x)||
  H(t,x)|\,d\mu^t(x)\notag\\
  \leq\,&
  \|\eta\|_{C^0(\Omega_T)}\sqrt{\frac{k}{2}}\sqrt{\Lenergy}
  +\int_\Omega |\eta(t,x)||
  H(t,x)|\,d\mu^t(x),\label{eq:dom-xi-k-2} 
\end{align}
where the right-hand side is bounded in $L^1(0,T)$, uniformly with
respect to $\eps>0$.

By the Dominated Convergence Theorem, \eqref{eq:conv-xi-k-t-2} and
\eqref{eq:dom-xi-k-2} imply that
\begin{gather}
  \int_0^T T_{\eps,k}^t(\eta(t,\cdot))\,dt \,\to\, \int_{\Omega_T}
  \eta\cdot H \,d\mu\quad\text{ as }\eps\to
  0. \label{eq:conv-xi-k-2}
\end{gather}
Further we obtain that
\begin{align}
  &\Big|\int_{\Omega_T} \eta\cdot w_\eps\nabla u_\eps\,dxdt -
  \int_{\Omega_T} \eta\cdot H\,d\mu\Big|\notag\\
  \leq\,&
  \Big|\int_0^T T_{\eps,k}^t(\eta(t,\cdot))\,dt -
  \int_{\Omega_T} \eta\cdot H\,d\mu\Big|\notag\\
 +\, & \Big|\int_{\mathcal{B}_{\eps,k}}\int_\Omega
  \eta(t,x)\cdot H(t,x)\,d\mu^t(x) dt\Big| +
  \Big|\int_{\mathcal{B}_{\eps,k}}\int_\Omega 
  \eta\cdot w_\eps\nabla
  u_\eps\,dx\,dt\Big| \label{eq:est-lim-T}
\end{align}
The last term on the right-hand side we further estimate by
\begin{align}
  &\Big|\int_{\mathcal{B}_{\eps,k}}\int_\Omega 
  \eta(t,x)\cdot w_\eps(t,x)\nabla
  u_\eps(t,x)\,dx\,dt\Big|\notag\\
  \leq\,& \|\eta\|_{C^0(\Omega_T)}
  \Big(\int_{\Omega_T}
  \frac{1}{2\eps}w_\eps^2\,dxdt\Big)^{1/2}
  |\mathcal{B}_{\eps,k}|^{1/2}\sqrt{\Lenergy} 
  \notag\\ 
  \leq\, &\|\eta\|_{C^0(\Omega_T)} \Lsquares
  \frac{1}{\sqrt{k}}\sqrt{\Lenergy}, \label{eq:est-lim-T-3}
\end{align}
where we have used \eqref{ass:bound-mu} and \eqref{eq:bound-BEk-2}. 
For the second term on the right-hand side of \eqref{eq:est-lim-T} we
obtain 
\begin{align}
  \Big|\int_{\mathcal{B}_{\eps,k}}\int_\Omega
  \eta(t,x)\cdot H(t,x)\,d\mu^t(x) dt\Big|\,&\leq\,
  |\mathcal{B}_{\eps,k}|^{1/2}
  \|\eta\|_{C^0(\Omega_T)}^{1/2}\Big(\int_{\spt(\eta)}  
  H^2\,d\mu\Big)^{1/2} \notag\\
  &\leq\,
  \frac{\sqrt{\Lsquares}}{\sqrt{k}}\|\eta\|_{C^0(\Omega_T)}^{1/2}\sqrt{\Lsquares}, 
  \label{eq:est-lim-T-2} 
\end{align} 
where we have used \eqref{eq:lsc-H} and \eqref{ass:bound-squares}.
Finally, for $k\in\N$ fixed, by \eqref{eq:conv-xi-k-2} we deduce that
\begin{gather}
  \lim_{\eps\to 0}\Big|\int_0^T T_{\eps,k}^t(\eta(t,\cdot))\,dt -
  \int_{\Omega_T} \eta\cdot H\,d\mu\Big|\,=\,0. \label{eq:est-lim-T-1}
\end{gather}
Taking $\eps\to 0$ in \eqref{eq:est-lim-T} we obtain by
\eqref{eq:est-lim-T-3}-\eqref{eq:est-lim-T-1} that
\begin{align}
  &\lim_{\eps\to 0}\Big|\int_{\Omega_T} \eta\cdot w_{\eps}\nabla u_{\eps}\,dxdt -
  \int_{\Omega_T} \eta\cdot H\,d\mu\Big|\notag\\
  \leq\,& 
  \frac{\Lsquares}{\sqrt{k}}
  \|\eta\|_{C^0(\Omega_T)} \sqrt{\Lenergy} +  \frac{1}{\sqrt{k}}\Lsquares
\end{align}
for any $k\in\N$, which proves \eqref{eq:conv-H-eps}. Using
\eqref{eq:conv-pair} this implies \eqref{eq:conv-v+H}. Finally we fix an
arbitrary nonnegative $\eta\in C^0(\overline{\Omega_T})$ and deduce that
the measure-function pair
$(\tilde{\mu}_\eps,\sqrt{\eta}(v_\eps-H_\eps))$ converges to
$(\mu,\sqrt{\eta}(v-H))$. The estimate
\eqref{eq:est-v+H} then follows from Theorem \ref{the:hutch}. 
\end{proof}

Let $\proj: [0,T]\times\overline{\Omega}\to [0,T]$ denote the projection onto
the first component and $\proj_\#$ the pushforward of measures by $\proj$. For
$\psi\in C^0(\overline{\Omega})$ we consider the measures
\begin{gather*}
  \alpha_\psi \,:=\,\proj_\# \big(\psi\alpha\big),
\end{gather*}
on $[0,T]$, that means
\begin{align*}
  \alpha_\psi(\zeta)\,&:=\, \int_{\Omega_T} \zeta(t)\psi(x)\,d\alpha(t,x),
\end{align*}
for $\zeta\in C^0([0,T])$, and set
\begin{align*}
  \alpha_\Omega \,:=\, \proj_\#\alpha.
\end{align*}
We then can estimate the atomic part of $\alpha_\Omega$ in terms of the
nucleation cost.
\begin{lemma}
\label{lem:atomic}
Let  $\A_{nuc}(\mu)$ be the nucleation cost defined in
\eqref{eq:def-S-nuc}. Then
\begin{equation}
  (\alpha_\Omega)_{atomic}[0,T]\,\geq\, 4 \A_{nuc}(\mu). \label{eq:est-A-atoms}
\end{equation}
\end{lemma}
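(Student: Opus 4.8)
The plan is to localize the identity \eqref{eq:KRT-1} near each $t_0\in S$ and near $t=0,T$, and to read off the sharp constant $4$ from the elementary bound $\eps a^2+\eps^{-1}b^2\ge 2|ab|$ rather than the naive constant $2$.

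First I would extract a pointwise-in-time inequality. Fix $\psi\in C^1(\overline{\Omega})$ with $0\le\psi\le1$ and write $P_\eps^t:=\partial_t\mu_\eps^t(\psi)$, $Q_\eps^t:=\int_\Omega\eps\nabla\psi\cdot\nabla u_\eps\,\partial_t u_\eps\,dx$; by \eqref{eq:neumann} one has $\int_\Omega\psi\,w_\eps\partial_t u_\eps\,dx=P_\eps^t+Q_\eps^t$, so rearranging \eqref{eq:KRT-1} gives
\begin{align*}
  \int_\Omega\psi\Big(\sqrt\eps\,\partial_t u_\eps+\frac{1}{\sqrt\eps}w_\eps\Big)^2dx
  &=\int_\Omega\psi\Big(\eps(\partial_t u_\eps)^2+\frac1\eps w_\eps^2\Big)dx+2\big(P_\eps^t+Q_\eps^t\big)\\
  &\ge 2\big|P_\eps^t+Q_\eps^t\big|+2\big(P_\eps^t+Q_\eps^t\big)\;\ge\;4P_\eps^t-4|Q_\eps^t|,
\end{align*}
where I used $\eps(\partial_t u_\eps)^2+\eps^{-1}w_\eps^2\ge2|\partial_t u_\eps|\,|w_\eps|$ together with $\psi\ge0$. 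Integrating over a time interval $(a,b)$, bounding the left side from above using $0\le\psi\le1$, and applying Cauchy--Schwarz in $(t,x)$ to $\int_a^b|Q_\eps^t|\,dt$, I obtain
\begin{gather*}
  \alpha_\eps\big((a,b)\times\Omega\big)\,\ge\,4\big(\mu_\eps^b(\psi)-\mu_\eps^a(\psi)\big)
  -4\|\nabla\psi\|_{C^0(\overline{\Omega})}\,\tilde{\mu}_\eps\big((a,b)\times\Omega\big)^{1/2}\Lsquares^{1/2},
\end{gather*}
with $\tilde{\mu}_\eps$ as in \eqref{eq:def-tilde-mu} and using \eqref{ass:bound-squares}.

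Next, fix $t_0\in S$ and choose $\delta>0$ --- avoiding the at most countably many bad values --- so that $t_0\pm\delta\notin S$ and $t_0\pm\delta$ are not atoms of $\alpha_\Omega$. Taking $\eps\to0$ with $a=t_0-\delta$, $b=t_0+\delta$: the left side has $\limsup\le\alpha_\Omega\big((t_0-\delta,t_0+\delta)\big)$ by \eqref{eq:conv-alpha}; on the right $\mu_\eps^{t_0\pm\delta}(\psi)\to\mu^{t_0\pm\delta}(\psi)$ by Proposition \ref{prop:KRT}, while the error term is bounded in the limit by $4\|\nabla\psi\|_{C^0(\overline{\Omega})}(2\delta\Lenergy)^{1/2}\Lsquares^{1/2}$ thanks to \eqref{eq:conv-tilde-mu}, \eqref{eq:disint-mu} and \eqref{ass:bound-mu}. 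Letting then $\delta\downarrow0$ along good values, the left side tends to $\alpha_\Omega(\{t_0\})$, the quantities $\mu^{t_0\pm\delta}(\psi)$ converge to the one-sided limits of the $BV$ function $t\mapsto\mu^t(\psi)$ (for its good representative), and the error disappears; hence $\alpha_\Omega(\{t_0\})\ge4\big(\lim_{t\downarrow t_0}\mu^t(\psi)-\lim_{t\uparrow t_0}\mu^t(\psi)\big)$, and a supremum over admissible $\psi$ takes care of the interior singular times. The same argument on the one-sided intervals $(0,\delta)$ and $(T-\delta,T)$, using $\mu_\eps^0\to\mu^0$ and $\mu_\eps^T\to\mu^T$ from Proposition \ref{prop:KRT}, yields $\alpha_\Omega(\{0\})\ge4\sup_\psi\big(\lim_{t\downarrow0}\mu^t(\psi)-\mu^0(\psi)\big)$ and $\alpha_\Omega(\{T\})\ge4\sup_\psi\big(\mu^T(\psi)-\lim_{t\uparrow T}\mu^t(\psi)\big)$.

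Finally, since the points of $S\cup\{0,T\}$ are distinct, $(\alpha_\Omega)_{atomic}[0,T]\ge\sum_{t_0\in S}\alpha_\Omega(\{t_0\})+\alpha_\Omega(\{0\})+\alpha_\Omega(\{T\})$, and summing the three families of estimates reproduces exactly $4\A_{nuc}(\mu)$ by \eqref{eq:def-S-nuc}. The bookkeeping with good representatives and with characteristic functions of time intervals --- which I would handle by squeezing between continuous cutoffs, legitimate precisely because $a,b$ are non-atoms of $\alpha_\Omega$ --- is routine. The one genuinely delicate point, and the reason the constant is $4$ and not $2$, is that one must \emph{not} discard the nonnegative term $\int_\Omega\psi(\eps(\partial_t u_\eps)^2+\eps^{-1}w_\eps^2)\,dx$ but bound it below by $2|P_\eps^t+Q_\eps^t|$, which doubles the contribution exactly on the intervals where $\mu_\eps^t(\psi)$ is increasing, i.e.\ on the nucleation events.
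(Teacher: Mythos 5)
Your proof is correct, and it reaches the same target constant $4$ by the same essential mechanism as the paper: rewrite the action density using \eqref{eq:KRT-1}, invoke the elementary bound $\eps a^2+\eps^{-1}b^2\ge 2|ab|$ so that $(\sqrt\eps\,a+\tfrac{1}{\sqrt\eps}b)^2\ge 4ab$, and integrate by parts via the Neumann condition to expose $\partial_t\mu_\eps^t(\psi)$. Where you genuinely diverge from the paper is in how the cross term $Q_\eps^t=\int_\Omega\eps\nabla\psi\cdot\nabla u_\eps\,\partial_t u_\eps\,dx$ is disposed of. The paper fixes a test function $\eta\in C^1(\overline{\Omega_T})$, passes to the limit $\eps\to 0$ in the global inequality, and identifies the cross term as $-\int\nabla\eta\cdot v\,d\mu$ using the measure--function pair convergence $(\tilde\mu_\eps,v_\eps)\to(\mu,v)$ from Lemma \ref{lem:velo}; the resulting term is then harmless because it is absolutely continuous in time and hence contributes nothing when atomic parts are evaluated. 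You instead localize in time to a shrinking window $(t_0-\delta,t_0+\delta)$, bound $\int|Q_\eps^t|\,dt$ via Cauchy--Schwarz by $\|\nabla\psi\|_{C^0}\,\tilde\mu_\eps((a,b)\times\Omega)^{1/2}\Lsquares^{1/2}$, and kill it by letting $\delta\downarrow 0$ after $\eps\to 0$. This buys you a more elementary and self-contained argument: you never need the generalized velocity $v$ or the convergence $v_\eps\to v$ at this point, only the bound \eqref{ass:bound-mu}. (You could even skip citing \eqref{eq:conv-tilde-mu} and just use $\tilde\mu_\eps\le 2\mu_\eps$ together with \eqref{eq:conv-mu}.) The cost is the extra care in the double limit and in choosing $t_0\pm\delta$ outside $S$ and outside the atoms of $\alpha_\Omega$ so that $\alpha_\eps((t_0-\delta,t_0+\delta)\times\overline\Omega)\to\alpha_\Omega((t_0-\delta,t_0+\delta))$; you have handled this correctly. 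Both proofs are sound; the paper's global passage is shorter once the machinery of Lemma \ref{lem:velo} is in place, while yours isolates the atomic estimate from the velocity theory entirely.
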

\begin{proof}
Let $\eta\in C^1(\overline{\Omega_{T}},\R^+_0)$ be nonnegative. We
compute that 
\begin{align}
  \int_{\Omega_{T}} \eta d\alpha_\eps
  \,=\, 
  &\int_{\Omega_T} \eta \Big(\eps(\partial_t u_\eps)^2 + \frac{1}{\eps}
  w_\eps^2 + 2\partial_t u_\eps w_\eps\Big)\,dxdt\notag\\
  \,\geq\,& 4\int_{\Omega_T} \eta \partial_t u_\eps w_\eps\,dxdt\notag\\
  \,=\, & - 4\int_{\Omega_T} \partial_t\eta\,  d\mu_\eps
  + 4\int_{\Omega_T} \nabla\eta\cdot\eps\partial_t u_\eps\nabla
  u_\eps\,dxdt\notag\\
  & + 4\mu_\eps^T(\eta(T,\cdot)) -4\mu_\eps^0(\eta(0,\cdot)).\label{eq:est-act-1} 
\end{align}
Passing to the limit $\eps\to 0$ we obtain from \eqref{eq:conv-mu},
\eqref{eq:conv-mu-t}, \eqref{eq:conv-pair} that 
\begin{align}
  \int_{\Omega_T} \eta d\alpha
  \,\geq\, 
  &  -4\int_{\Omega_T} \partial_t\eta \,d\mu
  - 4\int_{\Omega_T}
  \nabla\eta \cdot v\,d\mu + 4\mu^T(\eta(T,\cdot))
  -4\mu^0(\eta(0,\cdot)). \label{eq:squares-limit}
\end{align}
We now choose  $\eta(t,x)\,=\,\zeta(t)\psi(x)$ where $\zeta\in
C^1([0,T],\R^+_0)$, $\psi\in C^1(\overline{\Omega},\R^+_0)$ in
\eqref{eq:squares-limit} and deduce that
\begin{align}
  \int_0^T \zeta d\alpha_\psi
  \,\geq\, 
  &  -4\int_0^T \partial_t\zeta \mu^t(\psi)\,dt
  + 4\int_0^T \zeta \int_\Omega \nabla\psi \cdot v(t,x)\,d\mu^t(x)\,dt\notag\\
  & + 4\zeta(T)\mu^T(\psi)
  -4\zeta(0)\mu^0(\psi). \label{eq:squares-limit-2}
\end{align}
This shows that
\begin{align}
  \alpha_\psi \,\geq\,\, &4\partial_t(\mu^t(\psi)) +4
  \Big(\int_\Omega \nabla\psi(x) \cdot v(t,x)\,d\mu^t(x)\Big)\LL^1\notag\\ 
  +\,\,& 4\big(\mu^T(\psi)-\lim_{t\uparrow T}\mu^t(\psi)\big)\delta_T
  + 4\big(\lim_{t\downarrow 0}\mu^t(\psi)-\mu^0(\psi)\big)\delta_0. \label{eq:est-alpha-psi}
\end{align}
Evaluating the atomic parts we obtain that
for any $0< t_0< T$ 
\begin{gather*}
  \alpha_\psi(\{t_0\}) \,\geq \,
  4\partial_t(\mu^t(\psi))(\{t_0\}),
\end{gather*} 
which implies that
\begin{gather}
  \alpha_\Omega(\{t_0\})\,\geq\,
  4\sup_{\psi}\partial_t(\mu^t(\psi))(\{t_0\}).
  \label{eq:Snuc-1} 
\end{gather}
where the supremum is taken over all $\psi\in C^1(\overline{\Omega})$ with
$0\leq\psi\leq 1$.

Moreover we deduce from \eqref{eq:est-alpha-psi}
\begin{gather}
  \alpha_\Omega(\{0\})\,\geq\, 4 \sup_\psi\big(\lim_{t\downarrow
    0}\mu^t(\psi)-\mu^0(\psi)\big), \label{eq:Snuc-2}\\
  \alpha_\Omega(\{T\})\,\geq\, 4 \sup_\psi\big(\mu^T(\psi)-\lim_{t\uparrow
    T}\mu^t(\psi)\big),  \label{eq:Snuc-3}
\end{gather}
where the supremum is taken over $\psi\in C(\overline{\Omega})$ with
$0\leq \psi\leq 1$. By \eqref{eq:Snuc-1}-\eqref{eq:Snuc-3} we conclude
that \eqref{eq:est-A-atoms} holds. 
\end{proof}
%===============================================================
\begin{proof}[Proof of Theorem \ref{the:main}]
By \eqref{eq:est-v+H} we deduce that $\alpha\geq
|v-H|^2\mu$. Since $\mu=\LL^1\otimes\mu^t$ we deduce from
the Radon-Nikodym Theorem that
\begin{align}
  (\alpha_\Omega)_{ac}[0,T]\,&\geq\, \int_{\Omega_T} |v-H|^2\,d\mu,
  \label{eq:proof-A-1}
\end{align}
and from \eqref{eq:est-A-atoms} that
\begin{align}
  (\alpha_\Omega)_{atomic}[0,T]\,&\geq\, 4\A_{nuc}(\mu),
  \label{eq:proof-A-2} 
\end{align}
where $(\alpha_\Omega)_{ac}$ and $(\alpha_\Omega)_{atomic}$ denote the
absolutely continuous and
atomic part with respect to $\LL^1$ of the measure $\alpha_\Omega$.
Adding the two estimates and recalling \eqref{eq:ass-liminf} we obtain
\eqref{eq:lsc-S}.  
\end{proof}
%======================================================
% Proofs velocity
%======================================================
\section{Proofs of Proposition \ref{prop:uni_gen_vel_perp} and
  Proposition \ref{prop:velo}} 
\label{sec:proofs-velo}
Define for $r>0$, $(t_0,x_0)\in\Omega_T$ the cylinders
\begin{gather*}
  Q_r(t_0,x_0)\,:=\, (t_0-r,t_0+r)\times B_r^{n}(x_0).
\end{gather*}
\begin{proof}[Proof of Proposition \ref{prop:uni_gen_vel_perp}] 
Define
\begin{gather}
  \Sigma_n(\mu)\,:=\, \big\{(t,x)\in \Omega_T : \text{ the tangential
  plane of $\mu$ in $(t,x)$ exists}\big\} \label{eq:def-Sigma-n}
\end{gather}
and choose $(t_0,x_0)\in \Sigma_n(\mu)$ such that 
\begin{gather}
  v\text{ is approximately continuous with respect to $\mu$ in
  }(t_0,x_0). \label{eq:ass-v} 
\end{gather}
Since $v\in L^2(\mu)$ we deduce
from \cite[Theorem 2.9.13]{Fede69} that 
\eqref{eq:ass-v} holds $\mu$-almost everywhere. Let
\begin{gather}
  P_0\,:=\, T_{(t_0,x_0)}\mu,\qquad
  \theta_0\,>\,0 \label{eq:def-P0}
\end{gather}
denote the tangential plane and multiplicity at $(t_0,x_0)$
respectively, and define
for any $\varphi\in C^0_c(Q_1(0))$ the scaled functions
$\varphi_\varrho\in C^0_c(Q_\varrho(t_0,x_0))$,
\begin{gather*}
  \varphi_\varrho(t,x) \,:=\,
  \varrho^{-n}\varphi\big(\varrho^{-1}(t-t_0),\varrho^{-1}(x-x_0)\big).  
\end{gather*}
We then obtain from \eqref{eq:def-P0} that
\begin{gather}
  \int_{\Omega_T} \varphi_\varrho\,d\mu\,\to\, \theta_0
  \int_{P_0}\varphi\,d\Ha^{n}\quad\text{ as }\varrho\searrow
  0. \label{eq:conv-blow} 
\end{gather}
From \eqref{eq:def_vel_L2_flow}, the Hahn--Banach Theorem, and the Riesz
Theorem we deduce that
\begin{gather}
  \vartheta\,\in\, C^1_c(\Omega_T)^*,\qquad
  \vartheta(\eta) \,:=\, \int_{\Omega_T} \nablatx\eta\cdot
  \begin{pmatrix}
    1\\v
  \end{pmatrix}
  \,d\mu \label{eq:vartheta}
\end{gather}
can be extended to a (signed) Radon-measure on $\Omega_T$. Since by the
Radon-Nikodym Theorem $D_\mu |\vartheta|$ exists and is finite
$\mu$-almost everywhere we may assume without loss of generality that 
\begin{gather}
  D_\mu |\vartheta| (t_0,x_0)\,<\,\infty. \label{eq:bound-z0}
\end{gather}

We next fix $\eta\in C^1_c(Q_1(0))$ and compute that
\begin{gather}
  \vartheta(\varrho\eta_\varrho)\,=\, \int_{\Omega_T}
  \big(\nablatx\eta\big)_\varrho\cdot 
  \begin{pmatrix}
    1\\v
  \end{pmatrix}
  \,d\mu. \label{eq:RN-eta-1}
\end{gather}
From \eqref{eq:ass-v}, \eqref{eq:conv-blow} we deduce that 
the right-hand side converges in the limit $\varrho\to 0$,
\begin{gather}
  \lim_{\varrho\to 0}\int_{\Omega_T} \big(\nablatx\eta\big)_\varrho\cdot
  \begin{pmatrix}
    1\\v
  \end{pmatrix}
  \,d\mu\,=\,  
  \theta_0 \begin{pmatrix}
    1\\v(t_0,x_0)
  \end{pmatrix}
  \cdot \int_{P_0} \nablatx\eta
  \,d\mu.  \label{eq:velo-rhs}
\end{gather}
For the left-hand side of \eqref{eq:RN-eta-1} we deduce that
\begin{gather}
  \liminf_{\varrho\searrow 0}|\vartheta(\varrho\eta_\varrho)|\,\leq\,
  \|\eta\|_{C^0_c(Q_1(0))}\liminf_{\varrho\searrow
  0}\varrho^{-n+1}|\vartheta|(Q_\varrho(t_0,x_0)) \label{eq:velo-lhs}
\end{gather}
and observe that \eqref{eq:bound-z0} implies
\begin{align}
  \infty\,>\, \lim_{\varrho\searrow 0}
  \frac{|\vartheta|(Q_\varrho(t_0,x_0))}{\mu(Q_\varrho(t_0,x_0))} 
  \,&\geq\, \liminf_{\varrho\searrow 0}
  \varrho^{-n}|\vartheta|(Q_\varrho(t_0,x_0))
  \Big(\limsup_{\varrho\searrow
  0}\varrho^{-n}{\mu(Q_\varrho(t_0,x_0))}\Big)^{-1} \notag\\ 
  &\geq\, c\liminf_{\varrho\searrow 0}
  \varrho^{-n}|\vartheta|(Q_\varrho(t_0,x_0)),
  \label{eq:dens-bdd}
\end{align}
since by \eqref{eq:conv-blow} for any $\varphi\in C^0_c(Q_{2}(0),\R^+_0)$
with $\varphi\geq 1$ on $Q_1(0)$
\begin{gather*}
  \limsup_{\varrho\searrow
  0}\varrho^{-n}\mu(Q_\varrho(t_0,x_0))\,\leq\,
 \limsup_{\varrho\searrow
  0}\int_{\Omega_T}\varphi_\varrho\,d\mu \,\leq\, C(\varphi).
\end{gather*}
Therefore  
\eqref{eq:RN-eta-1}-\eqref{eq:dens-bdd} yield
\begin{gather}
  \theta_0 \begin{pmatrix}
    1\\v(t_0,x_0)
  \end{pmatrix}
  \cdot \int_{P_0} \nablatx\eta
  \,d\mu  \,=\, 0. \label{eq:RN-eta-3}
\end{gather}
Now we observe that the integral over the projection of $\nablatx \eta$
onto $P_0$ vanishes. This shows that 
\begin{gather}
  \int_{P_0} \nablatx \eta \,d\Ha^n \,\in\,
  P_0^\perp. \label{eq:perp} 
\end{gather}
Since $\eta$ can be chosen such that the integral in
\eqref{eq:perp} takes an arbitrary direction normal to
$P_0$ we obtain 
from \eqref{eq:RN-eta-3} that $v(t_0,x_0)$ satisfies
\eqref{eq:gen_v_perp}. If $T_{x_0}\mu^{t_0}$ exists then
\begin{gather*}
  T_{(t_0,x_0)}\mu\,=\, \Big(\{0\}\times T_{x_0}\mu^{t_0}\Big)
  \oplus\,\operatorname{span}
  \begin{pmatrix} 
    1\\v(x_0)
  \end{pmatrix}
\end{gather*}
and we obtain that $v$ is uniquely determined.
\end{proof}
%======================================
To prepare the proof of Proposition \ref{prop:velo} we first show that
$\mu$ is absolutely continuous with respect to $\Ha^n$.
\begin{proposition}\label{prop:upp-dens-mu}
For any $D\subset\subset \Omega$ there exists $C(D)$ such that for
all $x_0\in D$ and almost all $t_0\in (0,T)$
\begin{gather}
  \limsup_{r\searrow 0} r^{-n}\mu\big(Q_r(t_0,x_0)\big) \,\leq\,
  C(D)\Lenergy + \liminf_{\eps\to 0} \int_D
  \frac{1}{\eps}w_\eps^2(t_0,x)\,dx. \label{eq:upp-dens-mu}  
\end{gather}
In particular,
\begin{gather}
  \limsup_{\rho\to 0}\frac{\mu(B_\rho(t_0,x_0))}{\rho^n}\,<\,\infty\quad
  \text{for }\mu-\text{almost every } (t_0,x_0)
  \label{eq:upp-dens-mu-2} 
\end{gather}
and $\mu$ is absolutely continuous with respect to $\Ha^n$,
\begin{gather}
  \mu\,<<\,\Ha^n. \label{eq:lem:ac}
\end{gather}
\end{proposition}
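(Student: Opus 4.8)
The plan is to prove the key density estimate \eqref{eq:upp-dens-mu}, from which the two consequences follow by standard measure-theoretic arguments. The strategy is to localize \eqref{eq:brakke-eps}-type monotonicity information at a point $(t_0,x_0)$ and use the uniform action bound to control the time-derivative of the localized energy.

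First I would fix $D\subset\subset\Omega$ and choose a cut-off function $\psi\in C^1_c(\Omega)$ with $\psi\equiv 1$ on a neighborhood of $D$. For $x_0\in D$, $r$ small, and a suitable radially symmetric test function of the form $\psi(x)\varphi_r(x-x_0)$ (or rather the product with a time cut-off $\zeta$ supported near $t_0$), I would exploit the computation \eqref{eq:KRT-1}, which controls $\partial_t\mu_\eps^t$ against the diffuse energy and the term $\frac{1}{\eps}w_\eps^2$. The point is that the diffuse energy of $\mu_\eps^t$ near $x_0$ at a fixed time $t$ admits a \emph{monotonicity-type} lower bound coming from the stationary theory in \cite{RS}: more precisely, for $t\in (0,T)\setminus S$ with $\liminf_{\eps\to 0}\int_D\frac{1}{\eps}w_\eps^2(t,x)\,dx<\infty$, the limit measure $\mu^t$ is an integral varifold with $L^2$ mean curvature (Theorem \ref{the:lsc-H}), and Simon's monotonicity formula for varifolds with $L^2$ mean curvature gives $\limsup_{r\searrow 0}r^{-(n-1)}\mu^t(B_r(x_0))\le C\big(\mu^t(\Omega)+\int_D|H(t,\cdot)|^2\,d\mu^t\big)$, hence $\le C\Lenergy+\liminf_{\eps\to 0}\int_D\frac{1}{\eps}w_\eps^2(t_0,x)\,dx$ using \eqref{eq:lsc-H-t}.

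Then I would integrate in $t$: writing $\mu=\LL^1\otimes\mu^t$ (Proposition \ref{prop:KRT}) and $Q_r(t_0,x_0)=(t_0-r,t_0+r)\times B_r^n(x_0)$, one has $\mu(Q_r(t_0,x_0))=\int_{t_0-r}^{t_0+r}\mu^t(B_r(x_0))\,dt$. Combining the fixed-time bound $\mu^t(B_r(x_0))\le C r^{n-1}\big(\Lenergy+\int_D\frac{1}{\eps}w_\eps^2\big)$ with the fact that the time-interval has length $2r$, and using that $t\mapsto\mu^t(\psi)$ is $BV$ so the energy cannot concentrate in time, yields $\mu(Q_r(t_0,x_0))\le C r^{n}\big(C(D)\Lenergy+\liminf_{\eps\to 0}\int_D\frac{1}{\eps}w_\eps^2(t_0,x)\,dx\big)$ for almost every $t_0$. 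Dividing by $r^n$ and taking $\limsup_{r\searrow 0}$ gives \eqref{eq:upp-dens-mu}.

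The two remaining assertions are routine. For \eqref{eq:upp-dens-mu-2}: since $\int_{\Omega_T}\frac{1}{\eps}w_\eps^2\,dxdt\le\Lsquares$, Fatou's lemma gives $\int_0^T\liminf_{\eps\to 0}\int_D\frac{1}{\eps}w_\eps^2(t,x)\,dx\,dt<\infty$, so $t_0\mapsto\liminf_{\eps\to 0}\int_D\frac{1}{\eps}w_\eps^2(t_0,x)\,dx$ is finite for a.e.\ $t_0$; together with \eqref{eq:upp-dens-mu} and the fact that $\mu$ gives no mass to the $\LL^1$-null set of bad times (because $\mu=\LL^1\otimes\mu^t$), this yields $\limsup_{\rho\to 0}\rho^{-n}\mu(B_\rho(t_0,x_0))<\infty$ for $\mu$-a.e.\ $(t_0,x_0)$. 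Finally, the upper density bound $\limsup_{\rho\to 0}\rho^{-n}\mu(B_\rho(\cdot))<\infty$ $\mu$-a.e.\ implies $\mu<<\Ha^n$ by the standard comparison between measures and Hausdorff measure (e.g.\ \cite{Si}, \cite{AFPa00}): if $A$ is $\Ha^n$-null then $\mu(A)=0$. The main obstacle is the first step: extracting the fixed-time monotonicity bound uniformly enough in $t$ (for a.e.\ $t_0$, not merely $t_0\notin S$) so that integration in time produces the correct power $r^n$ rather than something weaker; this requires care in combining the $BV$-in-time control of the energy with the varifold monotonicity formula, and in tracking that the relevant integrand $\liminf_{\eps\to 0}\int_D\frac{1}{\eps}w_\eps^2(t_0,\cdot)$ is finite at the chosen time.
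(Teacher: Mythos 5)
Your overall strategy --- apply a monotonicity estimate for $\mu^t$ as a varifold with $L^2$ mean curvature at each fixed time $t$, then integrate in $t$ over an interval of length $2r$ around $t_0$ --- is essentially the paper's (which cites \cite[Proposition 4.5]{RS} for exactly this quantitative step). However, there are two real imprecisions you should fix. First, there is an index slip: the fixed-time monotonicity error involves $\int_D|H(t,\cdot)|^2\,d\mu^t$ (hence, via \eqref{eq:lsc-H-t}, $\liminf_{\eps\to 0}\int_D\frac{1}{\eps}w_\eps^2(t,\cdot)\,dx$) at the \emph{varying} time $t$, not at $t_0$, so after integrating over $t\in(t_0-r,t_0+r)$ and dividing by $r^n$ the error term is $\frac{1}{r}\int_{t_0-r}^{t_0+r}g(t)\,dt$ with $g(t):=\liminf_{\eps\to 0}\int_D\frac{1}{\eps}w_\eps^2(t,\cdot)\,dx$. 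To pass to $\limsup_{r\searrow 0}$ and land on $g(t_0)$ you need $t_0$ to be a Lebesgue point of $g$, which holds a.e.\ since $g\in L^1(0,T)$ by Fatou (as you observe). That is the mechanism that handles what you flag at the end as ``the main obstacle''; the $BV$-in-time property of $t\mapsto\mu^t(\psi)$ is not the relevant tool here, and the paper does not use it for this proposition. Second, you state the varifold bound only in the asymptotic form $\limsup_{r\searrow 0}r^{-(n-1)}\mu^t(B_r(x_0))\leq C(\cdots)$, but the integration step requires a bound valid at each finite radius $r<r_0$ (with $r_0$ a fixed radius comparable to $\dist(D,\partial\Omega)$), of the form $r^{1-n}\mu^t(B_r(x_0))\leq r_0^{1-n}\mu^t(B_{r_0}(x_0))+C\,g(t)$; only then can you integrate over the $t$-interval and obtain the stated $r^n$ scaling. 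Finally, the opening paragraph (localizing \eqref{eq:KRT-1} with a cut-off $\psi\varphi_r\zeta$) is abandoned immediately in favor of the varifold monotonicity formula; it is not used in the paper's proof either and should be removed. The consequences \eqref{eq:upp-dens-mu-2} and \eqref{eq:lem:ac} you derive correctly, matching the paper.
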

\begin{proof}
Let
\begin{gather*}
  r_0\,:=\, \min\Big\{ 1, \frac{1}{2}\dist(D,\partial\Omega), |t_0|,
  |T-t_0|\Big\}.
\end{gather*}
Then we obtain for all $r<r_0$, $x_0\in D$,
%$t\in (t_0-r_0,t_0+r_0)$ with
%\begin{gather*}
%  \liminf_{\eps\to 0} \alpha_\eps^{t}(D)\,<\,\infty.
%\end{gather*}
from \eqref{eq:zero-xi} and \cite[Proposition 4.5]{RS} that
\begin{align}
  &\frac{1}{r}\int_{t_0-r}^{t_0+r}
  r^{1-n}\mu^t\big(B_r^{n}(x_0)\big)\,dt \notag\\
  \leq\,
  &\frac{1}{r}\int_{t_0-r}^{t_0+r}
  r_0^{1-n}\mu^t\big(B_{r_0}^{n}(x_0)\big)\,dt
  + \frac{1}{4(n-1)^2}
  \frac{1}{r}\int_{t_0-r}^{t_0+r}\left( \liminf_{\eps\to 0}
  \int_D \frac{1}{\eps}w_\eps^2(t,x)\,dx\right)\,dt. \label{eq:upper-int}
\end{align}
By Fatou's Lemma and \eqref{ass:bound-squares}
\begin{gather}
  t\,\mapsto\, \liminf_{\eps\to 0}
  \int_D \frac{1}{\eps}w_\eps^2(t,x)\,dx\quad\text{ is in
  }L^1(0,T) \label{eq:l1-fatou}
\end{gather}
and by \eqref{ass:bound-mu} we deduce
for almost all $t_0\in (0,T)$ that
\begin{align*}
  &\limsup_{r\searrow 0}\frac{1}{r}\int_{t_0-r}^{t_0+r}
  r^{1-n}\mu^t\big(B_r^{n}(x_0)\big)\,dt \\
  \leq\,
  &2r_0^{1-n}\Lenergy  + \frac{1}{2(n-1)^2}
  \liminf_{\eps\to 0}\int_D \frac{1}{\eps}w_\eps^2(t_0,x)\,dx.
\end{align*}
Since $r_0$ depends only on $D,\Omega$ the inequality
\eqref{eq:upp-dens-mu} follows.

By \eqref{eq:l1-fatou} the right-hand side in \eqref{eq:upp-dens-mu} is
finite for $\LL^1$-almost all $t_0\in (0,T)$ and
$\theta^{*n}(\mu,(t,x))$ is bounded for almost all $t\in (0,T)$ and all
$x\in\Omega$. By \eqref{ass:bound-mu} we deduce that for any $I\subset
(0,T)$ with $|I|=0$
\begin{gather*}
  \mu(I\times\Omega)\,\leq\, \Lenergy |I|\,=\, 0
\end{gather*}
which implies \eqref{eq:upp-dens-mu-2}. 

To prove the final statement let $B\subset \Omega_T$ be given with
\begin{gather}
  \Ha^n(B)\,=\, 0. \label{eq:ass-nullset}
\end{gather}
Consider the family of sets $(D_k)_{k\in\N}$,
\begin{gather*}
  D_k\,:=\, \{z\in \Omega_T\,:\, \theta^{*n}(\mu,z)
   \leq k\}.
\end{gather*}
By \eqref{eq:upp-dens-mu-2}, \cite[Theorem 3.2]{Si}, and
\eqref{eq:ass-nullset} we 
obtain that for all $k\in\N$ 
\begin{gather}
  \mu(B\cap D_k)\,\leq\, 2^nk\Ha^n(B\cap D_k)\,=\, 0. \label{eq:null-k}
\end{gather}
Moreover we have that
\begin{align}
  \mu(B\setminus\bigcup_{k\in\N}D_k)\,&=\,0
  \label{eq:null-infty}
\end{align}
by \eqref{eq:upp-dens-mu-2}. By \eqref{eq:null-k},
\eqref{eq:null-infty} we conclude that
\begin{gather*}
  \mu(B)\,=\, 0,
\end{gather*}
which proves \eqref{eq:lem:ac}.
\end{proof}
%=========================================
To prove Proposition \ref{prop:velo} we need that $\Ha^n$-almost
everywhere on $\partial^*\{u=1\}$ the generalized tangent plane of $\mu$
exists. We first obtain the following relation between the
measures $\mu$ and $|\nablatx u|$.
\begin{proposition}\label{prop:lower-bound-mu}
There exists a nonnegative function $g\in L^2(\mu,\R^+_0)$ such that
\begin{gather}
  g\,\mu\,\geq\, \frac{c_0}{2}|\nablatx u|. \label{eq:lower-bound-mu}
\end{gather}
In particular, $|\nablatx u|$ is absolutely continuous with respect to
$\mu$,
\begin{gather}
  |\nablatx u|\,<<\, \mu. \label{eq:ac-nabla-u}
\end{gather}
\end{proposition}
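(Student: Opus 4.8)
The plan is to obtain \eqref{eq:lower-bound-mu} by a blow-up/density argument comparing the full space-time measures $\mu$ and $|\nablatx u|$, using the already-established space-time perimeter bound \eqref{eq:est-MM} and the discrepancy vanishing from Proposition \ref{prop:xi=0}. First I would observe that since $u_\eps\to u$ in $L^1(\Omega_T)$ and $W(u_\eps)\LL^{n+1}$ converges (by Modica--Mortola type estimates) to a measure controlling $|\nablatx u|$, one has the lower-density inequality: for $|\nablatx u|$-almost every $(t_0,x_0)\in\Omega_T$ (in fact for $\Ha^n$-a.e.\ point of $\partial^*\{u=1\}$),
\begin{gather*}
  \liminf_{\varrho\searrow 0}\frac{\mu(Q_\varrho(t_0,x_0))}{\varrho^n}\,\geq\, \frac{c_0}{2}\,\omega_{n-1}\,\theta(t_0,x_0),
\end{gather*}
where $\theta$ is the $n$-dimensional density of $|\nablatx u|$ (which exists and is finite $|\nablatx u|$-a.e.\ since $u\in BV(\Omega_T)$). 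The standard way to get this is to note that $\tilde\mu_\eps=\eps|\nabla u_\eps|^2\LL^{n+1}\to\mu$ by \eqref{eq:conv-tilde-mu}, that $\sqrt{2W(u_\eps)}|\nablatx u_\eps|\leq \frac{\eps}{2}|\nablatx u_\eps|^2+\frac1\eps W(u_\eps)$, and that the left-hand side controls $\nablatx\big(\Phi(u_\eps)\big)$ with $\Phi'=\sqrt{2W}$; passing to the limit in a ball $Q_\varrho(t_0,x_0)$ yields $\mu(Q_\varrho)\geq \frac{c_0}{2}|\nablatx u|(Q_\varrho)$ for \emph{every} cylinder, not just in a density sense. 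Actually this last observation already gives the full statement directly, which I would then record.

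More precisely: for every $\eta\in C^0_c(\Omega_T,\R_0^+)$,
\begin{align*}
  \int_{\Omega_T}\eta\,d\mu_\eps + \int_{\Omega_T}\eta\,d|\xi_\eps|
  \,=\,&\int_{\Omega_T}\eta\Big(\frac{\eps}{2}|\nablatx u_\eps|^2+\frac1\eps W(u_\eps)\Big) + \int_{\Omega_T}\eta\,d|\xi_\eps|\\
  \,\geq\,& \int_{\Omega_T}\eta\,\sqrt{2W(u_\eps)}\,|\nablatx u_\eps|\,dxdt\,=\,\int_{\Omega_T}\eta\,|\nablatx(\Phi\circ u_\eps)|\,dxdt,
\end{align*}
where I write $\Phi(r):=\int_{-1}^r\sqrt{2W(s)}\,ds$, so $\Phi(1)-\Phi(-1)=c_0$. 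Since $u_\eps\to u$ in $L^1$ and $|u_\eps|$ stays essentially bounded (by the energy bound one controls $\|u_\eps\|_{L^\infty}$, or one truncates), $\Phi\circ u_\eps\to \Phi\circ u = \tfrac{c_0}{2}(u+1) - \text{const}$ in $L^1(\Omega_T)$, hence by lower semicontinuity of total variation $\liminf_\eps \int\eta\,|\nablatx(\Phi\circ u_\eps)| \geq \int\eta\,d|\nablatx(\Phi\circ u)| = \frac{c_0}{2}\int\eta\,d|\nablatx u|$. Combining with $\mu_\eps\to\mu$ \eqref{eq:conv-mu} and $|\xi_\eps|\to 0$ \eqref{eq:zero-xi} gives $\int\eta\,d\mu\geq \frac{c_0}{2}\int\eta\,d|\nablatx u|$ for all nonnegative $\eta\in C^0_c$, i.e.\ $\mu\geq\frac{c_0}{2}|\nablatx u|$ as measures on $\Omega_T$. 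This is \eqref{eq:lower-bound-mu} with $g\equiv 1$; but since the proposition asks for $g\in L^2(\mu)$ one may simply take $g\equiv 1$, noting $1\in L^2(\mu)$ because $\mu(\overline{\Omega_T})\leq T\Lenergy<\infty$ by \eqref{ass:bound-mu}. The absolute continuity \eqref{eq:ac-nabla-u} is then immediate: if $\mu(A)=0$ then $\frac{c_0}{2}|\nablatx u|(A)\leq g\mu(A)=0$.

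The main obstacle — and the reason the statement is phrased with a general $g\in L^2(\mu)$ rather than $g\equiv 1$ — is the control of $\|u_\eps\|_{L^\infty}$ needed to pass from the pointwise convergence of $u_\eps$ to convergence of $\Phi\circ u_\eps$ in $L^1$, since $W$ grows quartically and $\Phi$ grows cubically. If no $L^\infty$ bound is available one truncates: set $\Phi_M(r):=\int_{-1}^{r}\sqrt{2W}\,\mathbbm 1_{[-M,M]}$, so $\Phi_M$ is Lipschitz and bounded, $\Phi_M\circ u_\eps\to\Phi_M\circ u$ in $L^1$, and $\int\eta|\nablatx(\Phi_M\circ u_\eps)|\leq\int\eta\sqrt{2W(u_\eps)}|\nablatx u_\eps|$ still holds; letting $M\to\infty$ after $\eps\to0$ and using monotone convergence (on the limit side $\Phi_M\circ u\to\Phi\circ u$ since $|u|=1$ a.e.) recovers the constant $c_0$. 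With the truncation handled this way, $g\equiv 1$ still works, and the only reason one might need a genuinely nonconstant $g$ is if one wanted the sharper statement that the Radon--Nikodym derivative $d|\nablatx u|/d\mu$ lies in $L^2(\mu)$ rather than just $L^\infty(\mu)$ — but $L^\infty(\mu)\subset L^2(\mu)$ here by finiteness of $\mu$, so this is not an additional constraint. I would therefore present the argument with $g\equiv 1$, inserting the truncation step as the one point requiring care.
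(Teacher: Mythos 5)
Your argument contains a fatal error in the very first display: you rewrite $\mu_\eps$ as
$\big(\tfrac\eps2|\nablatx u_\eps|^2+\tfrac1\eps W(u_\eps)\big)\LL^{n+1}$, but by definition \eqref{eq:def-mu-eps} the gradient appearing in $\mu_\eps$ is the \emph{spatial} gradient $\nabla u_\eps$, not the full space-time gradient $\nablatx u_\eps$. The difference, $\tfrac\eps2(\partial_t u_\eps)^2\LL^{n+1}$, is not controlled by $\mu_\eps$ or $|\xi_\eps|$. Consequently the chain
$\int\eta\,d\mu_\eps+\int\eta\,d|\xi_\eps|\geq\int\eta\sqrt{2W(u_\eps)}|\nablatx u_\eps|$ does not hold, and the purported conclusion $\mu\geq\tfrac{c_0}{2}|\nablatx u|$ (i.e.\ $g\equiv 1$) is in fact \emph{false}. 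To see this concretely, take $n=1$ and a flat interface translating at constant speed $v$: then $\mu^t=c_0\delta_{vt}$, so $\mu(\eta)=c_0\int_0^T\eta(t,vt)\,dt$, whereas $\tfrac{c_0}{2}|\nablatx u|(\eta)=c_0\sqrt{1+v^2}\int_0^T\eta(t,vt)\,dt$; here $\tfrac{c_0}{2}|\nablatx u|=\sqrt{1+v^2}\,\mu$, and the factor exceeds $1$ as soon as the interface moves. So the Radon--Nikodym derivative genuinely grows with the velocity, and a nontrivial $g$ is unavoidable. (This also means the statement in the proposition \emph{cannot} be upgraded to $g\in L^\infty(\mu)$.) Your closing paragraph, attributing the need for a non-constant $g$ merely to an $L^\infty$-control issue resolvable by truncation, therefore misdiagnoses the actual obstruction: what is missing in $\mu_\eps$ is the time derivative of $u_\eps$, not large values of $u_\eps$.

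The correct route — which is the one the paper takes — is to introduce $G$ with $G'=\sqrt{2W}$ and note the pointwise identity (on $\{\nabla u_\eps\neq 0\}$)
\begin{equation*}
  |\nablatx G(u_\eps)|\,=\,\sqrt{1+|v_\eps|^2}\,|\nabla G(u_\eps)|,
\end{equation*}
where the missing time-derivative has been converted into the factor $\sqrt{1+|v_\eps|^2}$ with $v_\eps$ the approximate velocity from \eqref{eq:def-v-eps}. The spatial quantity $|\nabla G(u_\eps)|\,dx\,dt$ is then comparable to $\tilde\mu_\eps=\eps|\nabla u_\eps|^2\LL^{n+1}$ up to the vanishing discrepancy $\xi_\eps$, and $\sqrt{1+|v_\eps|^2}$ is bounded in $L^2(\tilde\mu_\eps)$ by \eqref{eq:bound-v-eps}. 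By Hutchinson's compactness for measure-function pairs (Theorem \ref{the:hutch}) one extracts $g\in L^2(\mu)$ with $(\tilde\mu_\eps,\sqrt{1+|v_\eps|^2})\to(\mu,g)$, and lower semicontinuity of the total variation of $G(u_\eps)\to G(u)$ then yields $g\mu\geq\tfrac{c_0}{2}|\nablatx u|$. The whole point of the $L^2$-bound on $g$, and of stating the proposition this way, is precisely that the velocity factor cannot be absorbed into a constant.
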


\begin{proof}
Let
\begin{gather}
  G(r)\,=\, \int_0^r \sqrt{2 W(s)}\,ds. \label{eq:def-G}
\end{gather}
On the set $\{|\nabla
u_\eps| \neq 0\}$ we have
\begin{align}
  |\nabla G(u_\eps)|= \,&\frac{|\nabla G(u_\eps)|}{|\nablatx G(u_\eps)|}{|\nablatx
  G(u_\eps)|}\notag\\
  =\,&\frac{|\nabla G(u_\eps)|}{\sqrt{\partial_t G(u_\eps)^2 +|\nabla
  G(u_\eps)|^2}}{|\nablatx   G(u_\eps)|}\notag\\
  =\,& \frac{1}{\sqrt{1+\vert v_\eps^2\vert}}{|\nablatx
  G(u_\eps)|}. \label{eq:pre}
\end{align}
Letting $\tilde{\mu}_\eps$ as in \eqref{eq:def-tilde-mu} we get from
\eqref{eq:bound-v-eps}, \eqref{ass:bound-mu}, and Theorem \ref{the:hutch}
the existence of a function $g \in L^2(\mu)$ such that (up to a subsequence) 
\begin{gather}\label{eq:conv-sqrt-mu}
\lim_{\eps\to 0}(\tilde\mu_\eps,\sqrt{1+\vert v_\eps\vert^2})\,=\,(\mu,g)
\end{gather}
as measure-function pairs on $\Omega_T$ with values in $\R$.

Let $\eta\in C^0_c(\Omega_T)$. Then 
\begin{align}
 & \Big\vert \int_{\Omega_T}\eta\sqrt{1+\vert
 v_\eps\vert^2}\,\vert\nabla G(u_\eps)\vert\,dxdt- 
 \int\eta \sqrt{1+\vert v_\eps\vert^2}\,d\tilde\mu_\eps\Big\vert \notag\\ 
 =&\Big\vert \int_{\Omega_T}\eta\sqrt{1+\vert
 v_\eps\vert^2}\Big(\sqrt{\frac{2W(u_\eps)}{\eps}} 
 -\sqrt{\eps}\vert\nabla u_\eps\vert\Big)\sqrt{\eps}\vert\nabla
 u_\eps\vert\,dxdt\Big\vert \notag\\ 
 \leq & \Big(\int_{\Omega_T}\eta^2(1+\vert v_\eps\vert^2)\eps\vert\nabla 
 u_\eps\vert^2\,dxdt\Big)^{1/2}\Big\|\sqrt{\frac{2W(u_\eps)}{\eps}}  
 -\sqrt{\eps}\vert\nabla u_\eps\vert\Big\|_{L^2(\Omega_T)} \notag\\ 
 \leq&\|\eta\|_{L^\infty}(2T\Lenergy+\Lsquares)^{1/2}
 (2\vert\xi_\eps\vert(\Omega_T))^{1/2}.   
 \label{eq:drake}
\end{align}
Thanks to \eqref{eq:conv-sqrt-mu}, \eqref{eq:drake} and
\eqref{eq:zero-xi} we conclude that 
\begin{gather}\label{eq:conv-sqrt-mu-2}
 \lim_{\eps\,\to 0}(\vert\nabla G(u_\eps)\vert\,\LL^{n+1},\sqrt{1+\vert v_\eps\vert^2})
 =(\mu,g)
\end{gather}
as measure-function pairs on $\Omega_T$ with values in $\R$.

Again by \eqref{ass:bound-squares} we have
\begin{align*}
  & \int_{\{0=\vert\nabla u_\eps\vert<W(u_\eps)\}} |\nablatx
  G(u_\eps)|\, dxdt
  \\
  =\, & \int_{\{0=\vert\nabla
  u_\eps\vert<W(u_\eps)\}}\vert\partial_t
  u_\eps\vert\sqrt{2W(u_\eps)}\,dxdt\\ 
  \leq\, & \sqrt 2\Big(\int_{\Omega_T}\eps(\partial_t
  u_\eps)^2\,dxdt\Big)^{1/2}\Big(\int_{\{0=\vert\nabla
  u_\eps\vert<W(u_\eps)\}}\frac{W(u_\eps)}{\eps}\,dxdt\Big)^{1/2}\\ 
  \leq & \sqrt{2\Lsquares}(\vert\xi_\eps\vert(\Omega_T))^{1/2},
\end{align*}
which vanishes by \eqref{eq:zero-xi} as $\eps\to 0$.
This implies together with \eqref{eq:pre} and
\eqref{eq:conv-sqrt-mu-2} that
\begin{equation*}
\begin{split}
  \int\eta \,g\,d\mu=&\, \lim_{\eps\to 0}\int\eta\sqrt{1+\vert
  v_\eps\vert^2}\vert\nabla G(u_\eps)\vert\, dxdt \\
  =&\, \lim_{\eps\to 0} \int_{\Omega_T} \eta |\nablatx G(u_\eps)|\,dxdt
  \geq \frac{c_0}{2} \int_{\Omega_T}\eta\, d\vert\nablatx u\vert,
\end{split}  
\end{equation*}
where in the last line we used that
\begin{gather*}
  \frac{c_0}{2} \int_{\Omega_T} \eta\,d|\nablatx u|\,=\, \int_{\Omega_T} \eta
  \,d|\nablatx G(u)|\,\leq\, \liminf_{\eps\to 0}
  \int_{\Omega_T} \eta |\nablatx G(u_\eps)|\,dxdt.
\end{gather*}
Considering now a set $B\subset
\partial^*\{u=1\}$ with $\mu(B)\,=\, 0$ we conclude that
\begin{gather*}
  |\nablatx u|(B)\,\leq\,\frac{2}{c_0}\int_B g\,d\mu\,=\,0,
\end{gather*}
since $g\in L^2(\mu)$.
\end{proof}

\begin{proposition}\label{prop:rect}
In $\Ha^n$-almost-all points in $\partial^*\{u=1\}$ the
tangential-plane of $\mu$ exists.
\end{proposition}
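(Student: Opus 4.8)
The plan is to reduce the statement to classical rectifiability and differentiation theory, using only the qualitative facts already in hand: $u\in BV(\Omega_T;\{-1,1\})$ (Proposition~\ref{prop:MM}), the absolute continuity $\mu\ll\Ha^n$ and the finite-upper-density bound \eqref{eq:upp-dens-mu-2} (Proposition~\ref{prop:upp-dens-mu}), and the absolute continuity $|\nablatx u|\ll\mu$ \eqref{eq:ac-nabla-u} (Proposition~\ref{prop:lower-bound-mu}). Write $\Sigma:=\partial^*\{u=1\}\subset\Omega_T\subset\R^{n+1}$. Since $\{u=1\}$ is a set of finite perimeter in $\R^{n+1}$, De Giorgi's structure theorem gives that $\Sigma$ is countably $n$-rectifiable, that $|\nablatx u|=2\,\Ha^n\lfloor\Sigma$, and that at $\Ha^n\lfloor\Sigma$-a.e.\ point $z_0\in\Sigma$ the approximate tangent $n$-plane $T_{z_0}\Sigma$ exists and $\theta^n(\Ha^n\lfloor\Sigma,z_0)=1$. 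Because $|\nablatx u|=2\Ha^n\lfloor\Sigma$, ``$\Ha^n$-almost all points of $\partial^*\{u=1\}$'' is the same as ``$|\nablatx u|$-almost all points'', and it suffices to show that for $\Ha^n\lfloor\Sigma$-a.e.\ $z_0$ the rescalings $\varrho^{-n}(T_{z_0,\varrho})_{\#}\mu$, with $T_{z_0,\varrho}(y):=\varrho^{-1}(y-z_0)$, converge weakly-$*$ to a positive finite multiple of $\Ha^n\lfloor T_{z_0}\Sigma$.

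First I would split $\mu=\mu\lfloor\Sigma+\mu\lfloor(\Omega_T\setminus\Sigma)=:\nu_1+\nu_2$. Because $\mu\ll\Ha^n$ and $\nu_1$ is carried by the $\Ha^n$-$\sigma$-finite, countably $n$-rectifiable set $\Sigma$, a standard Radon--Nikodym argument produces a Borel density $\lambda:\Sigma\to[0,\infty)$, locally $\Ha^n\lfloor\Sigma$-integrable, with $\nu_1=\lambda\,\Ha^n\lfloor\Sigma$. From $|\nablatx u|=2\Ha^n\lfloor\Sigma\ll\mu$, hence $\ll\nu_1$, one gets $\lambda>0$ $\Ha^n\lfloor\Sigma$-a.e.; from the finite-upper-density bound \eqref{eq:upp-dens-mu-2} (which holds $\mu$-a.e., hence $\Ha^n\lfloor\Sigma$-a.e.\ as $\Ha^n\lfloor\Sigma\ll\mu$) together with $\theta^n(\Ha^n\lfloor\Sigma,z_0)=1$ one gets $\lambda<\infty$ $\Ha^n\lfloor\Sigma$-a.e. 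For $\nu_2$, note that it is carried by $\Omega_T\setminus\Sigma$ whereas $\Ha^n\lfloor\Sigma$ is carried by $\Sigma$, so $\nu_2\perp\Ha^n\lfloor\Sigma$; the Besicovitch--Lebesgue differentiation theorem then yields $\lim_{r\to0}\nu_2(B_r(z_0))/\Ha^n\lfloor\Sigma(B_r(z_0))=0$ for $\Ha^n\lfloor\Sigma$-a.e.\ $z_0$, and combining with $\Ha^n\lfloor\Sigma(B_r(z_0))=\omega_nr^n(1+o(1))$ gives $\theta^n(\nu_2,z_0)=0$, hence $\varrho^{-n}(T_{z_0,\varrho})_{\#}\nu_2\to0$ at such $z_0$.

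Next I would run the blow-up of $\nu_1$. Fix $z_0$ in the ($\Ha^n\lfloor\Sigma$-conull) set where, in addition to the properties above, $\lambda$ is $\Ha^n\lfloor\Sigma$-approximately continuous at $z_0$. Writing $\lambda=\lambda(z_0)+(\lambda-\lambda(z_0))$ inside $\int\varphi\circ T_{z_0,\varrho}\,d\nu_1$, the $\lambda(z_0)$-part converges to $\lambda(z_0)\int\varphi\,d(\Ha^n\lfloor T_{z_0}\Sigma)$ by the usual blow-up of $\Ha^n\lfloor\Sigma$ at a rectifiability point, while the remainder is controlled by $\|\varphi\|_\infty\,\varrho^{-n}\int_{\Sigma\cap B_{R\varrho}(z_0)}|\lambda-\lambda(z_0)|\,d\Ha^n$, which tends to $0$ by the approximate continuity of $\lambda$ together with $\theta^n(\Ha^n\lfloor\Sigma,z_0)=1$. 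Hence $\varrho^{-n}(T_{z_0,\varrho})_{\#}\nu_1\to\lambda(z_0)\,\Ha^n\lfloor T_{z_0}\Sigma$, and adding the vanishing contribution of $\nu_2$ we obtain $\varrho^{-n}(T_{z_0,\varrho})_{\#}\mu\to\lambda(z_0)\,\Ha^n\lfloor T_{z_0}\Sigma$ with $\lambda(z_0)\in(0,\infty)$. Thus the tangential plane of $\mu$ exists at $z_0$ (it equals $T_{z_0}\Sigma$, with multiplicity $\lambda(z_0)$), which proves the proposition.

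The only genuinely nontrivial input is the absolute continuity $\mu\ll\Ha^n$: it simultaneously forbids a part of $\mu$ that is singular with respect to $\Ha^n$ from sitting on $\Sigma$ and makes the clean splitting $\mu\lfloor\Sigma=\lambda\,\Ha^n\lfloor\Sigma$ together with $\theta^n(\mu\lfloor(\Omega_T\setminus\Sigma),\cdot)=0$ on $\Sigma$ possible; this is precisely where Proposition~\ref{prop:upp-dens-mu}, and through it the uniform action bound and the diffuse Willmore estimate of \cite{RS}, enters. I expect no serious obstacle beyond keeping the measure bookkeeping precise. A more hands-on alternative would blow up $\mu$ directly while tracking the $(n-1)$-rectifiable slices $\mu^t$ and the generalized velocity $v$ (in the spirit of the proof of Proposition~\ref{prop:uni_gen_vel_perp}, and this is what then yields via that proposition a unique normal velocity on $\partial^*\{u=1\}$), but the measure-theoretic argument above avoids that finer structure.
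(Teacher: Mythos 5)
Your proof is correct and follows essentially the same route as the paper's. You both (i) use $\mu \ll \Ha^n$ together with Radon--Nikodym to write $\mu\lfloor\partial^*\{u=1\}$ as a density times $\Ha^n\lfloor\partial^*\{u=1\}$, (ii) use $|\nablatx u|\ll\mu$ to get positivity of that density $\Ha^n$-a.e., (iii) use Proposition~\ref{prop:upp-dens-mu} for the finite upper-density bound, and (iv) show the part of $\mu$ off $\partial^*\{u=1\}$ contributes nothing to the blow-up, via a differentiation argument for mutually singular measures (the paper cites Federer 2.9.11; you use Besicovitch--Lebesgue directly, which is the same content). The only cosmetic difference is that the paper invokes rectifiability of the measure $f|\nablatx u|$ (via Simon, Remark 11.5) to obtain the tangent plane of $\mu\lfloor\partial^*\{u=1\}$, whereas you carry out the blow-up of $\lambda\,\Ha^n\lfloor\Sigma$ explicitly at approximate continuity points of $\lambda$; these are equivalent ways of stating the standard fact that a positive, $\Ha^n$-measurable density times an $n$-rectifiable set has a tangent plane almost everywhere.
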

\begin{proof}
From the Radon-Nikodym Theorem we obtain that the derivative
\begin{gather}
  f(z)\,:=\,D_{|\nablatx u|}\mu(z) \,:=\, \lim_{r\searrow 0}
  \frac{\mu(B_r^{n+1}(z))}{|\nablatx u|(B_r^{n+1}(z))} \label{eq:der-RN}
\end{gather}
exists for $|\nablatx u|$-almost-all $z\in \Omega_T$ and that $f\in
L^1(|\nablatx u|)$. By \eqref{eq:lem:ac} we deduce that 
\begin{gather}
  \mu\lfloor \partial^*\{u=1\} \,=\, f|\nablatx u|. \label{eq:rn}
\end{gather}
Similarly we obtain that 
\begin{gather*}
  \frac{1}{f(z)}\,=\, D_\mu |\nablatx u|(z)
\end{gather*}
is finite for $\mu$-almost all $z\in\partial^*\{u=1\}$. By
\eqref{eq:ac-nabla-u} this implies that
\begin{gather}
  f\,>\,0  \quad |\nablatx u|\text{-almost everywhere in
  }\Omega_T. \label{eq:dens-mu>0} 
\end{gather}
Since $|\nablatx u|$ is rectifiable and $f$ measurable with respect to
$|\nablatx u|$ we obtain from \eqref{eq:rn}, \eqref{eq:dens-mu>0} and
\cite[Remark 11.5]{Si} 
that 
\begin{gather}
  \mu\lfloor \partial^*\{u=1\}\text{ is rectifiable.} \label{eq:rect-rest}
\end{gather}
Moreover $\Ha^n$-almost-all $z\in\partial^*\{u=1\}$ satisfy that
\begin{align}
  &\lim_{r\searrow 0}\frac{\mu(B^{n+1}_r(z)\setminus
  \partial^*\{u=1\})}{\mu(B^{n+1}_r(z))} \,=\, 0, \label{eq:ass-z2}\\
  &\limsup_{r\searrow 0}\frac{\mu(B^{n+1}_r(z))}{\omega_n
  r^n}\,<\, \infty. \label{eq:ass-z3}
\end{align}
In fact,
\eqref{eq:ass-z2} follows from \cite[Theorem 2.9.11]{Fede69} and
\eqref{eq:ac-nabla-u}, and \eqref{eq:ass-z3} from
Proposition \ref{prop:upp-dens-mu} and \eqref{eq:ac-nabla-u}.
Let now $z_0\in\partial^*\{u=1\}$ satisfy
\eqref{eq:ass-z2}, \eqref{eq:ass-z3}.
For an arbitrary $\eta\in C^0_c(B_1^{n+1}(0))$ we then deduce that
\begin{align*}
  &\limsup_{r\to 0} \Big|\int_{\Omega_T\setminus \partial^*\{u=1\}}
  \eta\big(r^{-1}(z-z_0)\big) r^{-n}\,d\mu(z)\Big|
  \\
  \leq\,& \|\eta\|_{C^0_c(B_1^{n+1}(0))} 
  \limsup_{r\to 0} \frac{\mu\big(B_r^{n+1}(z_0)\setminus
  \partial^*\{u=1\}\big)}{\mu\big(B_r^{n+1}(z_0)\big)}
  \limsup_{r\to 0} \frac{\mu\big(B_r^{n+1}(z_0)\big)}{r^n}
  \,=\, 0
\end{align*}
by \eqref{eq:ass-z2}, \eqref{eq:ass-z3}. Therefore
\begin{gather*}
  \lim_{r\to 0} \int_{\Omega_T}
  \eta\big(r^{-1}(z-z_0)\big) r^{-n}\,d\mu(z)\,=\, \lim_{r\to 0}\int_{\partial^*\{u=1\}}
  \eta\big(r^{-1}(z-z_0)\big) r^{-n}\,d\mu(z)
\end{gather*}
if the latter limit exists. By \eqref{eq:rect-rest} we therefore
conclude that in $\Ha^n$-almost-all points of $\partial^*\{u=1\}$ the
tangent-plane of $\mu$ exists and coincides with the tangent plane
of $\mu\lfloor \partial^*\{u=1\}$.
\end{proof}

\begin{proof}[Proof of Proposition \ref{prop:velo}]
Since $u\in BV(\Omega_T)$ and $u(t,\cdot)\in BV(\Omega)$ for almost all
$t\in (0,T)$ we obtain that $\partial_t u, \nabla u$ are Radon measures
on $\Omega_T$ and that $\nabla u(t,\cdot)$ is a
Radon measure on $\Omega$ for almost all $t\in (0,T)$.
Moreover we observe  that $v\in L^1(|\nabla u|)$ since
\begin{gather*}
  \int_{\Omega_T} |v|\,d|\nabla u| \,\leq\, \int_{\Omega_T}
  |v|\,d|\nablatx u| \,\leq\, \frac{2}{c_0}\int_{\Omega_T} g|v|\,d\mu
  \,\leq\, \frac{2}{c_0}\|g\|_{L^2(\mu)}\|v\|_{L^2(\mu)}\,<\, \infty
\end{gather*}
by Theorem \ref{the:lsc-v} and Proposition \ref{prop:lower-bound-mu}.
From \eqref{eq:gen_v_perp} and Proposition \ref{prop:rect} we deduce
that for any $\eta\in C^1_c(\Omega_T)$ 
\begin{gather*}
  -\int_{\Omega_T} \eta\,d\partial_t u \,=\, \int_{\Omega_T} \eta v \,d\nabla
  u\,=\, \int_{\Omega_T} \eta v \cdot\frac{\nabla u}{|\nabla
  u|}\,d|\nabla u| \,=\, \int_0^T\int_\Omega \eta V\, d|\nabla
  u(t,\cdot)|\,dt, 
\end{gather*}
which proves \eqref{eq:def-v-diff}.
\end{proof}
%

%======================================================
% Discussion
%======================================================
\section{Conclusions}
\label{sec:discussion}

Theorem \ref{the:main} suggests to define a generalized action functional
$\A$ in the class of $L^2$-flows by
\begin{gather}
  \A(\mu)\,:=\, \inf_{v} \int_{\Omega_T} |v-H|^2\,d\mu + 4\A_{nuc}(\mu),
  \label{eq:def-S-conl} 
\end{gather}
where the infimum is taken over all  generalized velocities $v$
for the evolution $(\mu^t)_{t\in (0,T)}$. In the class of
$n$-rectifiable $L^2$-flows we have 
\begin{gather}
  \A(\mu)\,=\, \int_{\Omega_T} |v-H|^2\,d\mu + 4\A_{nuc}(\mu),
  \label{eq:def-S-conl-alt} 
\end{gather}
where $v$ is the unique normal velocity of $(\mu^t)_{t\in (0,T)}$ (see
Proposition \ref{prop:uni_gen_vel_perp}).

In the present section we compare the functional $\A$ 
with the functional $\AK$ defined in 
\cite{KORV} (see \eqref{def:red-action}) and discuss the implications of
Theorem 
\ref{the:main} on a full Gamma convergence 
result for the action functional. 
For the ease of the exposition we focus in this section on the switching
scenario.
\begin{assumption}
\label{ass:switch}
Let a sequence $(u_\eps)_{\eps>0}$ of smooth functions
$u_\eps:\Omega_T\to\R$ be given with uniformly bounded action
\eqref{ass:bound}, zero Neumann boundary data \eqref{eq:neumann}, and
assume for the initial- and final states that for all $\eps>0$
\begin{gather}
  u_\eps(0,\cdot)\,=\,-1,\,\quad u_\eps(T,\cdot)\,=\, 1 \qquad\text{ in
  }\Omega.
  \label{eq:bdry-cond-conl}
\end{gather}
\end{assumption}

Following \cite{KORV} we define the reduced action functional on the 
set $\mathcal{M}\subset BV(\Omega_T,\{-1,1\})\cap
L^\infty(0,T,BV(\Omega))$ such that 
\begin{itemize}
\item for every $\psi\in C^0_c(\Omega)$ the function 
\begin{gather*}
  t\mapsto\,\int_\Omega u(t,\cdot)\psi\,dx
\end{gather*}
is absolutely continuous on $[0,T]$;
\item $(\partial^*\{u(t,\cdot)=1\})_{t\in(0,T)}$ 
is up to countably many times given as a smooth evolution of
hypersurfaces. 
\end{itemize}

By Assumption \ref{ass:switch} the functional $\AK_{nuc}$
can be rewritten as
\begin{align}
  \AK(u)\,:=\, &c_0\int_0^T\int_{\Sigma_t} \big|v(t,x) -
  H(t,x)\big|^2 \,d\Ha^{n-1}(x)dt\, + 4\AK_{nuc}(u),
  \label{def:red-action-alt} \\
  \AK_{nuc}(u)\,:=\, &\sum_{t_0\in S}
  \sup_{\psi}\Big(\lim_{t\downarrow t_0} \frac{c_0}{2}|\nabla
  u(t,\cdot)|(\psi)-\lim_{t\uparrow 
  t_0}\frac{c_0}{2}|\nabla u(t,\cdot)|(\psi)\Big) \notag\\
  &+ \sup_\psi \lim_{t\downarrow 0}\frac{c_0}{2}|\nabla
  u(t,\cdot)|(\psi)
  \label{eq:def-S-nuc-alt} 
\end{align}
where the $\sup$ is taken over all $\psi\in C^1(\overline{\Omega})$ with
$0\leq\psi\leq 1$.

In \cite[Proposition 2.2]{KORV} a (formal) proof of the \emph{limsup-
estimate} was given for a subclass of `nice' functions in
$\mathcal{M}$. Following the 
ideas of that proof, using the one-dimensional construction
\cite[Proposition 3.1]{KORV}, and a density argument we expect that the
limsup-estimate can be extended to the whole set $\mathcal{M}$.
We do not give a rigorous proof here but rather assume the limsup-estimate in
the following.
\begin{assumption}\label{ass:limsup}
For all $u\in\mathcal{M}$ there exists a
sequence $(u_\eps)_{\eps>0}$ that satisfies Assumption \ref{ass:switch}
such that   
\begin{gather}
  u\,=\, \lim_{\eps\to 0}u_\eps,\qquad
  \AK(u)\,\geq\, \limsup_{\eps\to 0}\A_\eps(u_\eps). \label{eq:limsup-AK}
\end{gather}
\end{assumption}

The natural candidate for the Gamma-limit of $\A_\eps$ with respect to
$L^1(\Omega_T)$ is the $L^1(\Omega_T)$-lower semicontinuous envelope of
$\AK$, 
\begin{gather}
  \overline{\A}(u)\,:=\, \inf\Big\{ \liminf_{k\to\infty} \AK(u_k)\,:\,
  (u_k)_{k\in\N}\subset \mathcal{M},\, u_k\to u \text{ in }
  L^1(\Omega_T)\Big\}. \label{eq:def-relaxed} 
\end{gather}

\subsection{Comparison of $\A$ and $\AK$}
If we associate with a function $u\in\mathcal{M}$ the measure
$|\nabla u|$ on $\Omega_T$ we can compare  $\AK(u)$
and $\A(\frac{c_0}{2}|\nabla u|)$.
\begin{proposition}
\label{prop:comp-actions}
Let $u\in\mathcal{M}$ and let $\mu=\LL^1\otimes\mu^t$ be an $L^2$-flow
of measures. Assume that for almost
all $t\in (0,T)$ 
\begin{gather}
  \mu^t\,\geq\, \frac{c_0}{2}|\nabla u(t,\cdot)| \label{eq:ass-comp}
\end{gather}
and that the nucleation cost $\AK_{nuc}(u)$ is not larger than the
nucleation cost $\A_{nuc}(\mu)$.
Then
\begin{gather}
  \AK(u)\,\leq\, \A(\mu) \label{eq:comp-actions-1}
\end{gather}
holds. For $\mu=\frac{c_0}{2}|\nabla u|$ we obtain that
\begin{gather}
  \AK(u)\,=\, \A(\frac{c_0}{2}|\nabla u|). \label{eq:coincides}
\end{gather}
\end{proposition}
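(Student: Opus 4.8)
The plan is to reduce the comparison of the two functionals to a pointwise comparison on the evolving phase boundary $\Sigma_t:=\partial^*\{u(t,\cdot)=1\}$, which for $u\in\mathcal{M}$ is a smooth evolution away from countably many times. Since $\frac{c_0}{2}|\nabla u(t,\cdot)|=c_0\,\Ha^{n-1}\lfloor \Sigma_t$ for almost all $t$, the definition \eqref{def:red-action-alt} reads
\begin{gather*}
  \AK(u)\,=\,\int_0^T\int_{\Sigma_t}|v-H|^2\,d\big(c_0\Ha^{n-1}\big)\,dt\,+\,4\AK_{nuc}(u),
\end{gather*}
with $v,H$ the classical normal velocity and mean curvature of $(\Sigma_t)$. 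Using the hypothesis $\AK_{nuc}(u)\le\A_{nuc}(\mu)$ and the definition \eqref{eq:def-S-conl} of $\A$, it then suffices to show that for every generalized velocity $w$ of $(\mu^t)_{t\in(0,T)}$, with $H_\mu(t,\cdot)\in L^2(\mu^t)$ the weak mean curvature of $\mu^t$,
\begin{gather*}
  \int_0^T\int_{\Sigma_t}|v-H|^2\,d\big(c_0\Ha^{n-1}\big)\,dt\,\le\,\int_{\Omega_T}|w-H_\mu|^2\,d\mu.
\end{gather*}

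The key step will be to identify the restrictions of $w$ and $H_\mu$ to $\Sigma_t$ with $v$ and $H$, for almost all $t$ and $\Ha^{n-1}$-almost all $x\in\Sigma_t$. Since $\mu^t$ is integer rectifiable and $\mu^t\ge c_0\Ha^{n-1}\lfloor \Sigma_t$, the approximate tangent plane $T_x\mu^t$ exists and equals $T_x\Sigma_t$ at $\Ha^{n-1}$-almost every $x\in\Sigma_t$: at such a point the blow-up of $\mu^t$ is a single plane with positive multiplicity dominating $c_0\Ha^{n-1}\lfloor T_x\Sigma_t$, which forces the plane to be $T_x\Sigma_t$. A density argument parallel to Propositions \ref{prop:upp-dens-mu}--\ref{prop:rect}, but carried out from the $L^2$-flow hypotheses alone, shows that the spacetime tangent plane $T_{(t,x)}\mu$ likewise exists and agrees with the tangent plane of $\bigcup_t\{t\}\times\Sigma_t$ at $\Ha^n$-almost every point of the interface. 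Proposition \ref{prop:uni_gen_vel_perp} then forces $w(t,x)=v(t,x)$ there; and the locality of the weak mean curvature of integral varifolds with square-integrable curvature in dimensions $n=2,3$ (together, where hidden sheets of $\mu^t$ touch $\Sigma_t$, with a strong-maximum-principle argument) gives $H_\mu(t,x)=H(t,x)$.

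Granting the identification, the conclusion follows: writing $\mu^t\lfloor \Sigma_t=c_0\theta\,\Ha^{n-1}\lfloor \Sigma_t$ with $\theta\ge 1$ integer-valued and using nonnegativity of the integrand,
\begin{gather*}
  \int_{\Omega_T}|w-H_\mu|^2\,d\mu\,\ge\,\int_0^T\int_{\Sigma_t}|w-H_\mu|^2\,d\mu^t\,dt\,=\,\int_0^T\int_{\Sigma_t}\theta\,|v-H|^2\,d\big(c_0\Ha^{n-1}\big)\,dt\,\ge\,\int_0^T\int_{\Sigma_t}|v-H|^2\,d\big(c_0\Ha^{n-1}\big)\,dt,
\end{gather*}
which yields \eqref{eq:comp-actions-1}. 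For \eqref{eq:coincides} one specializes to $\mu^t=\frac{c_0}{2}|\nabla u(t,\cdot)|$, i.e.\ $\theta\equiv 1$: then $\mu^t=c_0\Ha^{n-1}\lfloor \Sigma_t$ is a constant multiple of the surface measure of the smooth hypersurface $\Sigma_t$, so its weak mean curvature equals $H$ directly from the first-variation formula, the generalized velocity is $\mu$-a.e.\ unique and equal to $v$ by Proposition \ref{prop:uni_gen_vel_perp}, and $\A_{nuc}(\tfrac{c_0}{2}|\nabla u|)=\AK_{nuc}(u)$ by direct comparison of \eqref{eq:def-S-nuc} with \eqref{eq:def-S-nuc-alt}, using $\mu^0=\mu^T=0$ from Assumption \ref{ass:switch}. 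Hence all the inequalities above become equalities and $\A(\tfrac{c_0}{2}|\nabla u|)=\AK(u)$.

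I expect the mean-curvature identification $H_\mu=H$ on $\Sigma_t$ in the higher-multiplicity case to be the main obstacle: one must rule out that $\mu^t$ carries additional $C^2$ sheets tangent to $\Sigma_t$ along a set of positive $\Ha^{n-1}$-measure, which would move the weak mean curvature away from $H$, and this is exactly where the locality of weak mean curvature and the strong maximum principle for integral varifolds with $L^2$ curvature — available for $n=2,3$ — come in. The remaining ingredients (tangent-plane existence along the interface, the velocity identification via Proposition \ref{prop:uni_gen_vel_perp}, and the chain of inequalities) are comparatively routine, and the equality \eqref{eq:coincides} needs only the trivial multiplicity-one case.
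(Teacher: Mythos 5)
Your proposal follows essentially the same route as the paper's proof: identify the weak mean curvature of $\mu^t$ and any generalized velocity $w$ with the classical mean curvature $H$ and normal velocity $V$ of $\Sigma_t=\partial^*\{u(t,\cdot)=1\}$, then use $\mu^t\geq c_0\Ha^{n-1}\lfloor\Sigma_t$ (integer multiplicity $\theta\geq 1$) together with nonnegativity of the integrand to compare the propagation terms, and finally compare the nucleation terms by hypothesis. The ``main obstacle'' you flag — identification of the weak mean curvature on $\Sigma_t$ in the presence of higher-multiplicity or hidden sheets — is exactly what the paper disposes of by citing Sch\"atzle's locality theorem \cite{Scha07}; no strong-maximum-principle argument is required. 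Your treatment of the velocity identification is slightly more self-contained: you go directly to Proposition \ref{prop:uni_gen_vel_perp} and note that one must first establish existence of the spacetime tangent plane $T_{(t,x)}\mu$ along the interface ``from the $L^2$-flow hypotheses alone,'' whereas the paper invokes Proposition \ref{prop:velo}, whose proof in turn rests on Propositions \ref{prop:upp-dens-mu}--\ref{prop:rect} in the $\eps$-limit setting; your observation that the density and rectifiability input must be available intrinsically for a general $L^2$-flow is a fair point about the logical layering, but it does not change the substance of the argument. The chain of inequalities and the equality case $\theta\equiv 1$ match the paper.
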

\begin{proof}
The locality of the mean curvature \cite{Scha07} shows that the weak
mean curvature of $\mu^t$ and the (classical) mean curvature coincide on
$\partial \{u(t,\cdot)=1\}$. By Proposition \ref{prop:velo} 
any generalized velocity $v$ and the (classical) normal velocity
$V$ are equal on the phase boundary. This shows that the integral part
of $\AK(u)$ is not larger than the integral part of $\A(\mu)$, with
equality if $\mu^t=\frac{c_0}{2}|\nabla  u(t,\cdot)|$ for almost all
$t\in (0,T)$. This proves \eqref{eq:comp-actions-1}. For the measure
$\frac{c_0}{2}|\nabla u|$ we observe that the nucleation cost
$\A_{nuc}(\frac{c_0}{2}\mu)$ equals the
nucleation cost $\AK_{nuc}(u)$ and we obtain \eqref{eq:coincides}.
\end{proof}

\begin{figure}
%\hspace*{-2cm}
\begin{minipage}{\textwidth}
  \begin{minipage}[b]{.4\linewidth} % [b] => Ausrichtung an \caption
    \includegraphics*[width=6cm]{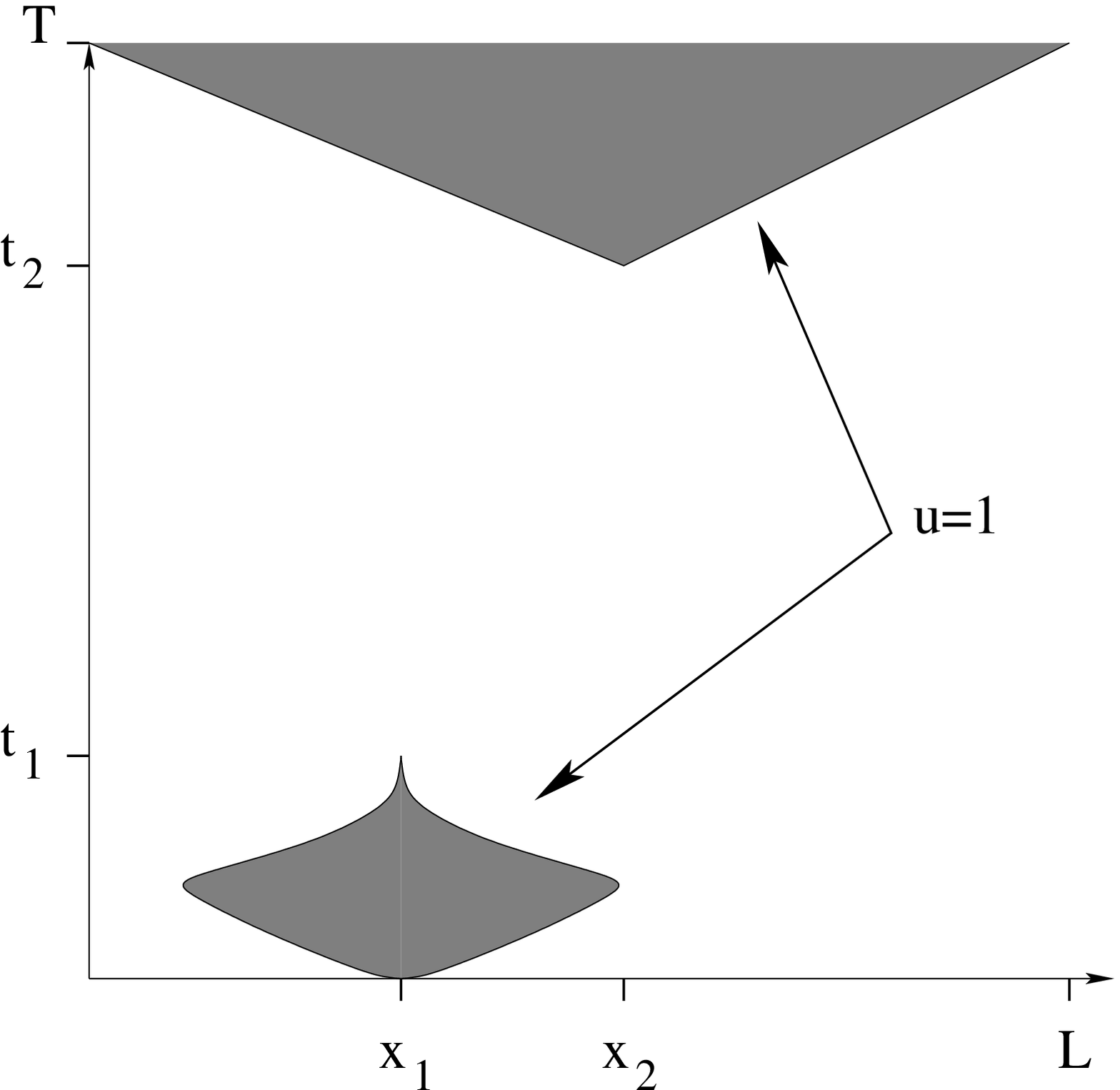}
    \caption{The phases $\{u=1\}$}
    \label{figure:phaseonly}
  \end{minipage}
  \hspace{.1\linewidth}% Abstand zwischen Bilder
  \begin{minipage}[b]{.4\linewidth} % [b] => Ausrichtung an \caption
    \includegraphics*[width=6cm]{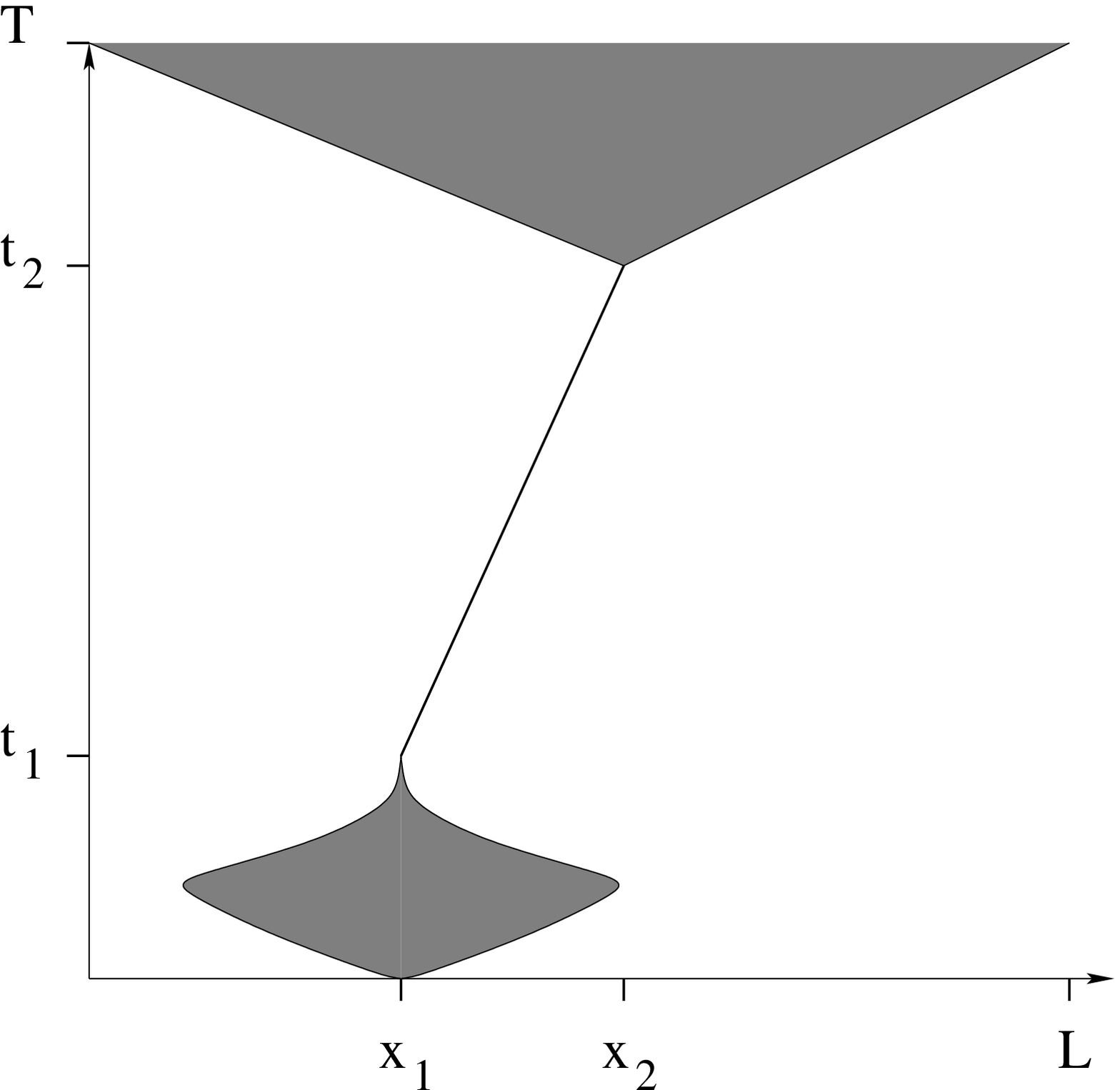}
    \caption{The measure $\mu$}
    \label{figure:muonly}
  \end{minipage}
\end{minipage}
\end{figure}

If higher multiplicities occur for the measure $\mu$, the nucleation costs
of $\mu$ and $u$ may differ and the value of 
$\AK(u)$ might be larger than $\A(\mu)$ as the following example
shows. Let $\Omega=(0,L)$, let $\{u=1\}$
be  the shaded regions in Figure \ref{figure:phaseonly}, and let $\mu$ be the
measure supported on the phase boundary and with double density on a
hidden boundary connecting the upper and lower part of the phase
$\{u=1\}$, see Figure \ref{figure:muonly}. At time $t_2$ a new phase is
nucleated but this time is not singular with respect to the
evolution $(\mu^t)_{t\in (0,T)}$. On the other hand, no
propagation cost occurs for the
evolution $(u(t,\cdot))_{t\in (t_1,t_0)}$ whereas there is a propagation
cost for 
$(\mu^t)_{t\in (t_1,t_2)}$. The difference in
action is given by
\begin{gather*}
  \AK(u)-\A(\mu) \,=\, 8c_0 - {2c_0}\frac{(x_2-x_1)^2}{t_2-t_1}.
\end{gather*}
where $x_1$ is the annihilation point at time $t_1$ and $x_2$ the
nucleation point at time $t_2$, see Figure \ref{figure:phaseonly}.
This shows that as soon as $(x_2-x_1)<4\sqrt{t_2-t_1}$ we have
\begin{gather*}
  \A(\mu)\,<\, \AK(u).
\end{gather*}
\begin{figure}[h!]
%\hspace*{-2cm}
\begin{minipage}{\textwidth}
  \begin{minipage}[b]{.4\linewidth} % [b] => Ausrichtung an \caption
    \includegraphics*[width=6cm]{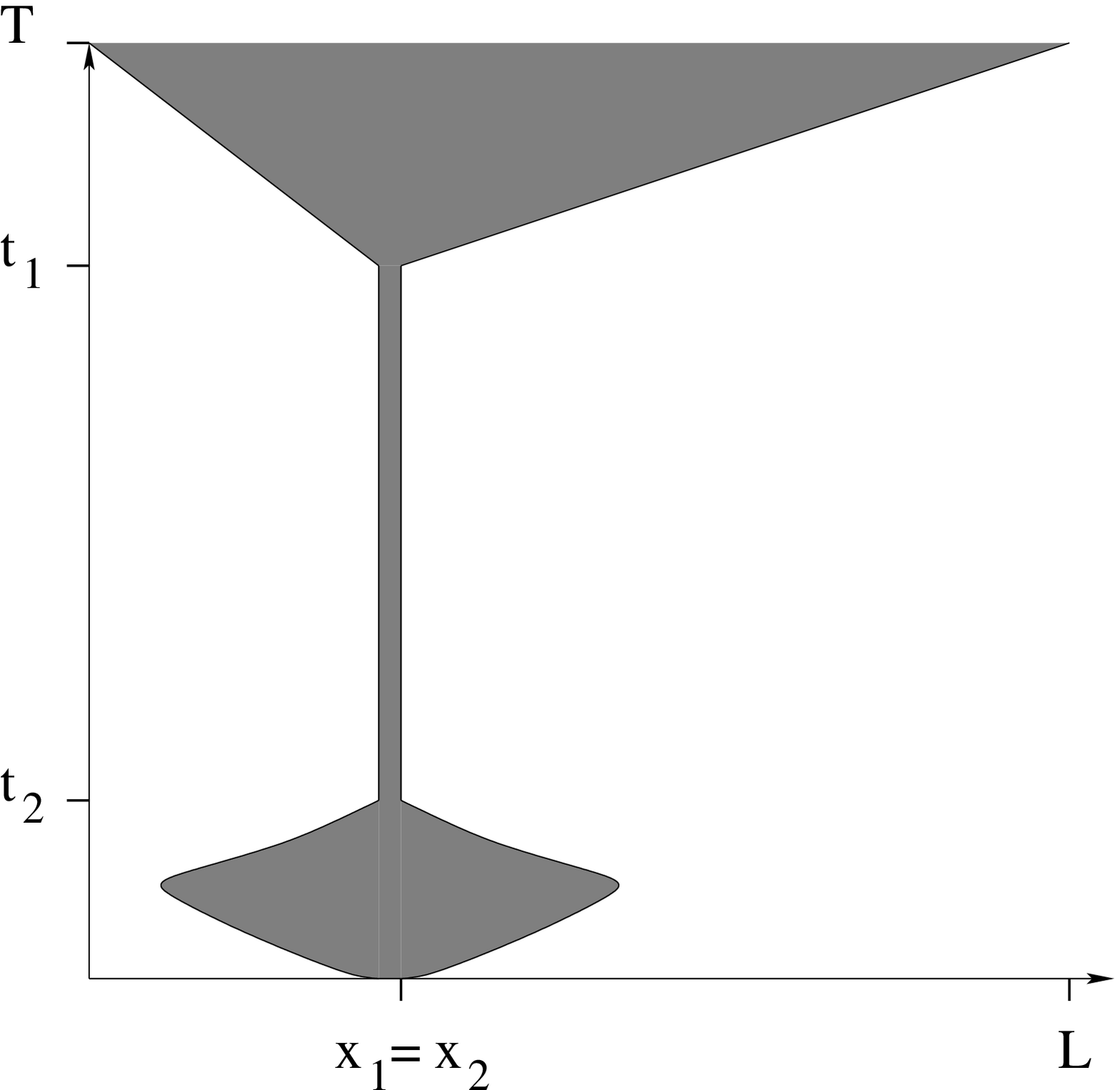}
    \caption{\mbox{Phases $\{u_k=1\}$}}
    \label{figure:phasek}
  \end{minipage}
  \hspace{.1\linewidth}% Abstand zwischen Bilder
  \begin{minipage}[b]{.4\linewidth} % [b] => Ausrichtung an \caption
    \includegraphics*[width=6cm]{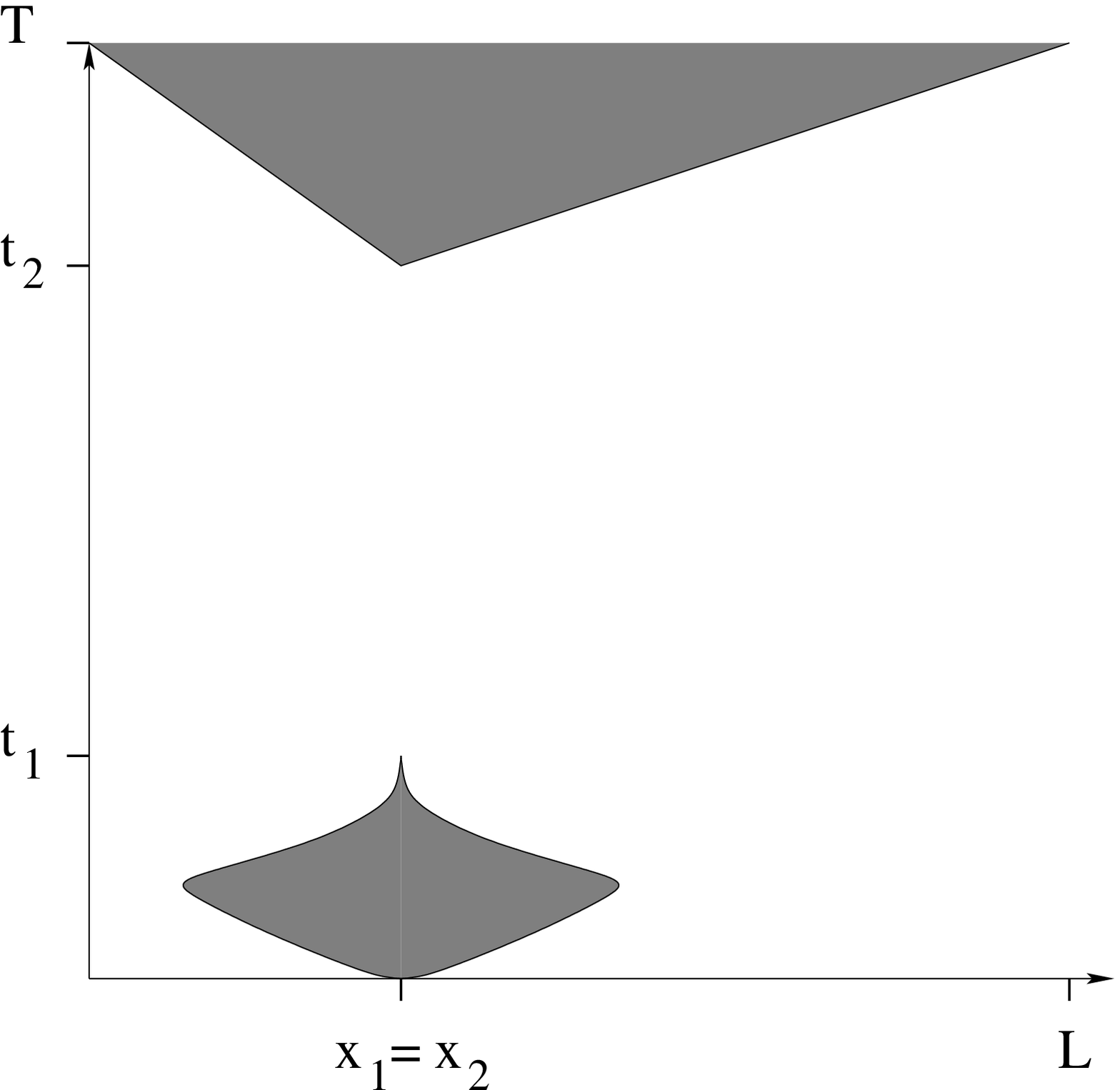}
    \caption{The limit}
    \label{figure:phaselim}
  \end{minipage}
\end{minipage}
\end{figure}

The same example with $x_2=x_1$ shows that $\AK$ is not
lower-semicontinuous and that a relaxation is necessary in 
order to obtain the Gamma-limit of $\A_\eps$. In fact consider a
sequence $(u_k)_{k\in\N}$ with phases $\{u_k=1\}$ given by the shaded
region in Figure \ref{figure:phasek}. Assume that the neck connecting the upper
and lower part of the shaded region disappears with $k\to\infty$ and
that $u_k$ converges to the phase indicator function $u$ with phase
$\{u=1\}$ indicated by the shaded regions in Figure \ref{figure:phaselim}. Then a
nucleation cost at time $t_2$ appears for $u$. For the approximations
$u_k$ however 
there is no nucleation cost for $t>0$ and the approximation can be made
such that the propagation cost in $(t_1,t_2)$ is arbitrarily small,
which shows that 
\begin{gather*}
  \AK(u)\,>\, \liminf_{k\to\infty} \AK(u_k).
\end{gather*}
The situation in higher space dimensions is even more involved than in
the one-dimensional examples discussed above. For instance one could
create a circle with double density (no new phase is created) at a time
$t_1$ and let this double-density circle grow until a time $t_2>t_1$
where the double-density circle splits and two circles evolve in
different directions, 
one of them shrinking and the other one growing. In this way a new phase
is created at time $t_2$. In this example $\A$ counts the creation
of a double-density circle at time $t_1$ and the cost of propagating the
double-density circle between the times $t_1,t_2$. In contrast $\AK$
counts the nucleation cost of the new phase at time $t_2$, which is
larger as the nucleation cost $\A_{nuc}$ at times $t_1$, but no
propagation cost between the times $t_1,t_2$. 

The analysis in \cite{KORV} suggests that minimizers of the action
functional exhibit nucleation and annihilation of phases only at the
initial- and final time. This class is therefore particularly
interesting.
\begin{theorem}\label{the:concl}
Let $(u_\eps)_{\eps>0}$ satisfy Assumption \ref{ass:switch} and suppose
that Assumption \ref{ass:limsup} holds.
Suppose that $u_\eps\to u$ in $L^1(\Omega_T)$, $u\in\mathcal{M}$, and
that $u$ exhibits nucleation and annihilation of phases only at the final
and initial time. Then 
\begin{gather}
  \overline{\A}(u)\,=\,\AK(u)\,\leq\,\liminf_{\eps\to
  0}\A_\eps(u_{\eps}) \label{eq:comp-concl}
\end{gather}
holds. In particular, $\A_\eps$ Gamma-converges to $\AK$ for those
evolutions in $\mathcal{M}$ that have nucleations only at the initial
time.  
\end{theorem}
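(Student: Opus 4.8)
The inequality $\overline{\A}(u)\le\AK(u)$ is immediate from \eqref{eq:def-relaxed}, since $u\in\mathcal{M}$ is admitted by the constant sequence. The content of the theorem is therefore (i) the lower bound $\AK(u)\le\liminf_{\eps\to 0}\A_\eps(u_\eps)$ for any sequence obeying Assumption \ref{ass:switch} and converging in $L^1(\Omega_T)$ to our $u$, and (ii) the reverse inequality $\overline{\A}(u)\ge\AK(u)$; (i) applied to the given sequence is the second inequality in \eqref{eq:comp-concl}, and (i) together with (ii) gives $\overline{\A}(u)=\AK(u)$.

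For (i) I would first note that Assumption \ref{ass:switch} implies Assumption \ref{ass:main}: the action bound \eqref{ass:bound} and the Neumann condition \eqref{eq:neumann} are assumed, while \eqref{ass:init} holds with constant $0$ because $u_\eps(0,\cdot)=-1$. Hence Propositions \ref{prop:MM} and \ref{prop:KRT} and Theorems \ref{the:lsc-H} and \ref{the:lsc-v} give, along a subsequence, the phase function $u$, the measure $\mu=\LL^1\otimes\mu^t$ which is (up to the factor $c_0^{-1}$) an $L^2$-flow with weak mean curvature $H$ and a generalized velocity $v$, and the bound $\mu^t\ge\tfrac{c_0}{2}|\nabla u(t,\cdot)|$ for a.e.\ $t$. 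The heart of the matter is the comparison of nucleation costs $\A_{nuc}(\mu)\ge\AK_{nuc}(u)$. Here the switching data enter decisively: $u_\eps(0,\cdot)=-1$ and $u_\eps(T,\cdot)=1$ force $\mu_\eps^0=\mu_\eps^T=0$, hence $\mu^0=\mu^T=0$; consequently in \eqref{eq:def-S-nuc} the final term vanishes (as $\lim_{t\uparrow T}\mu^t\ge 0$), the initial term reduces to $\sup_\psi\lim_{t\downarrow 0}\mu^t(\psi)$, and the interior sum is nonnegative, so $\A_{nuc}(\mu)\ge\sup_\psi\lim_{t\downarrow 0}\mu^t(\psi)$. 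On the other side, since by hypothesis $u$ has no nucleation or annihilation in $(0,T)$, the evolution $(\partial^*\{u(t,\cdot)=1\})_{t\in(0,T)}$ is smooth and $t\mapsto|\nabla u(t,\cdot)|(\psi)$ has no jumps in $(0,T)$, so the interior sum in \eqref{eq:def-S-nuc-alt} vanishes and $\AK_{nuc}(u)=\sup_\psi\lim_{t\downarrow 0}\tfrac{c_0}{2}|\nabla u(t,\cdot)|(\psi)$. Passing to the limit $t\downarrow 0$ (through good representatives and for nonnegative $\psi$) in $\mu^t\ge\tfrac{c_0}{2}|\nabla u(t,\cdot)|$ then yields $\A_{nuc}(\mu)\ge\AK_{nuc}(u)$. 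With this and $\mu^t\ge\tfrac{c_0}{2}|\nabla u(t,\cdot)|$, Proposition \ref{prop:comp-actions} gives $\AK(u)\le\A(\mu)$, and by the definition \eqref{eq:def-S-conl} of $\A$ and the fact that $v$ is a generalized velocity we get $\A(\mu)\le\int_{\Omega_T}|v-H|^2\,d\mu+4\A_{nuc}(\mu)$. Since Theorem \ref{the:main} bounds $\liminf_{\eps\to 0}\A_\eps(u_\eps)$ from below by exactly $\int_{\Omega_T}|v-H|^2\,d\mu+4\A_{nuc}(\mu)$, chaining these inequalities proves (i).

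For (ii), assume $\overline{\A}(u)<\infty$ (otherwise $\overline{\A}(u)\le\AK(u)$ forces equality). Choose $(u_k)_{k}\subset\mathcal{M}$ with $u_k\to u$ in $L^1(\Omega_T)$ and $\AK(u_k)\to\overline{\A}(u)$; by Assumption \ref{ass:limsup} each $u_k$ has a sequence $(u^{(k)}_\eps)_\eps$ obeying Assumption \ref{ass:switch} with $u^{(k)}_\eps\to u_k$ in $L^1$ and $\limsup_{\eps\to 0}\A_\eps(u^{(k)}_\eps)\le\AK(u_k)$. A diagonal choice $\eps_k\downarrow 0$ produces $\tilde u_k:=u^{(k)}_{\eps_k}$ with $\tilde u_k\to u$ in $L^1(\Omega_T)$, with $\A_{\eps_k}(\tilde u_k)\le\AK(u_k)+1/k$, and satisfying the Neumann and switching boundary conditions; since $\AK(u_k)$ is bounded, the relabeled sequence $(\tilde u_k)_k$ satisfies Assumption \ref{ass:switch}, and its $L^1$-limit $u$ nucleates only at $t=0,T$ by hypothesis. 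Applying (i) to $(\tilde u_k)_k$ gives $\AK(u)\le\liminf_k\A_{\eps_k}(\tilde u_k)\le\liminf_k\big(\AK(u_k)+1/k\big)=\overline{\A}(u)$, which together with $\overline{\A}(u)\le\AK(u)$ establishes $\overline{\A}(u)=\AK(u)$, hence \eqref{eq:comp-concl}. The final ``in particular'' assertion then follows by combining this $\liminf$-inequality (for the common value $\overline{\A}=\AK$) with the recovery sequences provided by Assumption \ref{ass:limsup}.

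I expect the main obstacle to be exactly the nucleation comparison $\A_{nuc}(\mu)\ge\AK_{nuc}(u)$. As the examples of Section \ref{sec:discussion} show, this is false for general limit measures $\mu$: a diffuse sequence may create a higher-multiplicity ``hidden'' interface and move it at a price that $\A_{nuc}$ charges earlier (and more cheaply) than $\AK_{nuc}$ charges the nucleation of the associated phase, so in general $\A_{nuc}(\mu)<\AK_{nuc}(u)$. What rescues the argument is precisely the hypothesis that $u$ nucleates only at the initial time (and annihilates only at the final time): together with $\mu^0=\mu^T=0$ coming from the switching data, it annihilates every interior contribution to $\AK_{nuc}(u)$ and reduces the comparison to the single, available inequality $\mu^t\ge\tfrac{c_0}{2}|\nabla u(t,\cdot)|$ read off in the limit $t\downarrow 0$.
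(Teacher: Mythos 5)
Your proof is correct and follows essentially the same route as the paper: the key ingredients are the compactness/lower‐bound machinery (Propositions \ref{prop:MM}, \ref{prop:KRT}, Theorems \ref{the:lsc-H}, \ref{the:lsc-v}, \ref{the:main}), the nucleation comparison enabled by $\mu^0=\mu^T=0$ (from the switching data) together with $\mu^t\geq\tfrac{c_0}{2}|\nabla u(t,\cdot)|$ and the hypothesis that $u$ nucleates only at the endpoints, Proposition \ref{prop:comp-actions}, and a diagonal argument against an optimizing sequence for $\overline{\A}(u)$. The only difference is organizational --- you extract the $\liminf$-bound as a standalone claim (i) and invoke it twice, whereas the paper runs the same chain of inequalities once for the diagonal sequence (to get $\overline{\A}(u)\ge\A(\mu)\ge\AK(u)$) and once for the original sequence --- and your closing remark correctly identifies the nucleation comparison as the step that would fail without the endpoints-only hypothesis.
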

\begin{proof}
From the definition of the functional $\overline{\A}$ we deduce that
\begin{gather}
  \overline{\A}(u)\,\leq\, \AK(u) \label{eq:first-comp}
\end{gather}
and that
there exists a sequence $(u_k)_{k\in\N}\subset\mathcal{M}$ such that
\begin{gather*}
  u\,=\, \lim_{k\to\infty} u_k,\qquad
  \overline{\A}(u)\,=\,\lim_{k\to\infty} \AK(u_k).
\end{gather*}
Assumption \ref{ass:limsup} implies that for all $k\in\N$
there exists a sequence $(u_{\eps,k})_{\eps>0}$ such that
\begin{gather}
   u_k\,=\,\lim_{\eps\to 0}u_{\eps,k},\qquad
  \AK(u_k)\,\geq\, \limsup_{\eps\to 0}\A_\eps(u_{\eps,k}). \label{eq:geq-eps-k}
\end{gather}
Therefore we can choose a diagonal-sequence $(u_{\eps(k),k})_{k\in\N}$ such
that
\begin{gather}
  \overline{\A}(u)\,\geq\, \limsup_{k\to
  \infty}\A_{\eps(k)}(u_{\eps(k),k}). \label{eq:leq-eps-k} 
\end{gather}
By Proposition \ref{prop:MM}, \ref{prop:KRT} there exists a
a subsequence $k\to\infty$ such that
\begin{gather}
  u_{\eps(k),k}\,\to\, u,\quad \mu_{\eps(k),k}\,\to\, \mu,\quad \mu\,\geq\,
  \frac{c_0}{2}|\nabla u|, \label{eq:conv-eps-k}
\end{gather}
where the last inequality follows from
\begin{gather*}
  \frac{c_0}{2}\int_\Omega \eta \,d|\nabla u(t,\cdot)|\,\leq\,
  \liminf_{\eps\to 0}\int_\Omega\eta |\nabla G(u_\eps)|\,dx\,\leq\,
  \liminf_{\eps\to 0}\int_\Omega \eta d\mu_\eps^t\,=\, \int_\Omega \eta
  d\mu^t,
\end{gather*}
with $G$ as in \eqref{eq:def-G}.
By Theorem \ref{the:main} we further deduce that
\begin{gather*}
  \liminf_{k\to \infty}\A_{\eps(k)}(u_{\eps(k),k})\,\geq\, \A(\mu).
\end{gather*}
This implies by \eqref{eq:leq-eps-k} that
\begin{gather}
  \overline{\A}(u)\,\geq\, \A(\mu). \label{eq:comp-2}
\end{gather}
Since $\mu^0=0$ and $\mu^t\geq  \frac{c_0}{2}|\nabla u(t,\cdot)|$ the nucleation
cost of $\mu$ at $t=0$ is not lower than the nucleation cost for
$u$. Since by assumption there are no more nucleation times
we can apply Proposition \ref{prop:comp-actions} and obtain that
$\AK(u)\,\leq\, \A(\mu)$. By \eqref{eq:first-comp}, \eqref{eq:comp-2}
we conclude that $\AK(u)=\overline{\A}(u)= \A(\mu)$.

Applying Proposition \ref{prop:MM} and  Theorem \ref{the:main} to the
sequence $(u_\eps)_{\eps>0}$ we deduce that there exists a subsequence
$\eps\to 0$ such that
\begin{gather}
  \mu_{\eps}\,\to\, \tilde{\mu},\qquad
  \tilde{\mu}\,\geq\,   \frac{c_0}{2}|\nabla u| \label{eq:conv-eps-tilde}
\end{gather}
and such that
\begin{gather*}
  \liminf_{\eps\to 0}\A_\eps(u_{\eps})\,\geq\, \A(\tilde{\mu}).
\end{gather*}
Repeating the arguments above we deduce from Proposition
\ref{prop:comp-actions} that $\AK(u)\leq \A(\tilde{\mu})$ and
\begin{gather*}
  \AK(u)\,\leq\, \liminf_{\eps\to 0} \A_\eps(u_\eps).
\end{gather*}
Combining the upper bound \eqref{eq:limsup-AK} with
\eqref{eq:comp-concl} proves the Gamma convergence of $\A_\eps$ in $u$.
\end{proof}

\subsection{Gamma convergence under an additional assumption}
Using Theorem \ref{the:main} we can prove the Gamma convergence of
$\A_\eps$ 
under an additional assumption on  the structure of the set of those
measures  
that arise as limit of sequences with uniformly bounded action.
\begin{assumption}
\label{ass:liminf}
Consider any sequence $(u_\eps)_{\eps>0}$ with $u_\eps\,\to\, u$ in $L^1(\Omega_T)$
that satisfies Assumption \ref{ass:switch}. Define the
energy measures 
$\mu_\eps$ according to \eqref{eq:def-mu-eps} and let $\mu$ be any Radon
measure such that for a subsequence $\eps\to 0$
\begin{gather}
  \mu=\lim_{\eps\to 0}\mu_\eps. \label{eq:ass-lim-mu}
\end{gather}
Then we assume that there exists a sequence
$(u_k)_{k\in\N}\subset\mathcal{M}$ such that 
\begin{align}
  u\,=\, \lim_{k\to\infty} u_k, \quad
  \A(\mu)\,\geq\, \lim_{k\to\infty} \AK(u_k). \label{eq:conv-S-u-approx}
\end{align}
\end{assumption}
For any $u\in\mathcal{M}$ that exhibits nucleation and annihilation only
at initial and final time the Assumption \ref{ass:liminf} is always
satisfied: The proof of Theorem \ref{the:concl} and our
results in Section \ref{sec:results} show that for any limit $\mu$ as in
\eqref{eq:ass-lim-mu} we can apply Proposition
\ref{prop:comp-actions}. Therefore $\AK(u)\leq 
\A(\mu)$ and the constant sequence $u$ satisfies
\eqref{eq:conv-S-u-approx}. 
However, a characterization of those $u\in\mathcal{M}$ such that Assumption
\ref{ass:liminf} holds is open.
\begin{theorem}
\label{the:Gamma-convergence}
Suppose that the Assumptions \ref{ass:switch}, \ref{ass:limsup}, and
\ref{ass:liminf} hold. Then
\begin{gather}
  \A_\eps\,\to\, \overline{\A}\quad\text{ as }\eps\to 0
  \label{eq:Gamma-convergence} 
\end{gather}
in the sense of Gamma-convergence with respect to $L^1(\Omega_T)$.
\end{theorem}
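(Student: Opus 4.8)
The plan is to verify the two defining inequalities of Gamma-convergence for the pair $(\A_\eps,\overline{\A})$ in $L^1(\Omega_T)$, exploiting the relaxation structure of $\overline{\A}$ together with the already established lower bound (Theorem \ref{the:main}) and the two standing Assumptions \ref{ass:limsup} and \ref{ass:liminf}. Throughout we restrict to sequences obeying the endpoint and Neumann conditions of Assumption \ref{ass:switch}; for any other competitor the relevant value of $\A_\eps$ is read as $+\infty$ and there is nothing to prove.

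\emph{Recovery sequences.} Fix $u$ with $\overline{\A}(u)<\infty$, the other case being trivial. By the definition \eqref{eq:def-relaxed} of the relaxed functional and a standard fact from relaxation theory we may choose $(u_k)_{k\in\N}\subset\mathcal{M}$ with $u_k\to u$ in $L^1(\Omega_T)$ and $\AK(u_k)\to\overline{\A}(u)$. For each fixed $k$, Assumption \ref{ass:limsup} produces a sequence $(u_{\eps,k})_{\eps>0}$ satisfying Assumption \ref{ass:switch} with $u_{\eps,k}\to u_k$ in $L^1(\Omega_T)$ as $\eps\to0$ and $\limsup_{\eps\to0}\A_\eps(u_{\eps,k})\le\AK(u_k)$. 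Attouch's diagonalisation lemma for $L^1(\Omega_T)$-convergence then yields a map $\eps\mapsto k(\eps)$ with $k(\eps)\to\infty$ as $\eps\to0$ such that $\tilde u_\eps:=u_{\eps,k(\eps)}\to u$ in $L^1(\Omega_T)$ and $\limsup_{\eps\to0}\A_\eps(\tilde u_\eps)\le\limsup_{k\to\infty}\AK(u_k)=\overline{\A}(u)$, which is the desired recovery sequence.

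\emph{Lower bound.} Let $u_\eps\to u$ in $L^1(\Omega_T)$ with $(u_\eps)_{\eps>0}$ as in Assumption \ref{ass:switch}, and put $L:=\liminf_{\eps\to0}\A_\eps(u_\eps)$; we may assume $L<\infty$ and pass to a subsequence along which $\A_\eps(u_\eps)\to L$. Since $u_\eps(0,\cdot)\equiv-1$ and $W(-1)=0$, the initial energy in \eqref{ass:init} vanishes, so Assumption \ref{ass:main} holds with $\Linit=0$ along this subsequence, and Propositions \ref{prop:MM}, \ref{prop:KRT} permit a further extraction with $\mu_\eps\to\mu$ as Radon measures, $\mu=\LL^1\otimes\mu^t$, and the limit $u$ unchanged. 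Theorem \ref{the:main} then gives
\begin{gather*}
  L\,=\,\liminf_{\eps\to0}\A_\eps(u_\eps)\,\ge\,\int_{\Omega_T}|v-H|^2\,d\mu+4\A_{nuc}(\mu)\,\ge\,\A(\mu),
\end{gather*}
the last step being the definition \eqref{eq:def-S-conl} of $\A(\mu)$ as an infimum over generalized velocities. Because $(u_\eps)$ satisfies Assumption \ref{ass:switch} and $\mu$ is a limit of the energy measures $\mu_\eps$, Assumption \ref{ass:liminf} supplies $(u_k)_{k\in\N}\subset\mathcal{M}$ with $u_k\to u$ in $L^1(\Omega_T)$ and $\A(\mu)\ge\lim_{k\to\infty}\AK(u_k)$; by \eqref{eq:def-relaxed} the latter is $\ge\overline{\A}(u)$, so $L\ge\overline{\A}(u)$. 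Together with the recovery sequence this establishes \eqref{eq:Gamma-convergence}.

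\emph{Main difficulty.} The substantive analysis has already been absorbed into Theorem \ref{the:main} and into the hypotheses \ref{ass:limsup}, \ref{ass:liminf}; what remains is organisational. The point requiring the most care is the passage, in the lower-bound step, from an \emph{arbitrary} competitor sequence to one for which Theorem \ref{the:main} is applicable: one must check that Assumption \ref{ass:main} is genuinely in force (here the switching endpoint conditions are exactly what forces $\Linit=0$) and extract a single subsequence along which $\A_\eps(u_\eps)$ converges to the liminf, the measures $\mu_\eps$ converge, and the $L^1$-limit of $u_\eps$ is unchanged; and, in the recovery step, the diagonal extraction $\eps\mapsto k(\eps)$, which relies on the standard Attouch diagonalisation lemma.
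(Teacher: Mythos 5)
Your proof is correct and follows essentially the same two-step strategy as the paper: the limsup estimate via relaxation of $\AK$, Assumption \ref{ass:limsup}, and a diagonal extraction; and the liminf estimate via passage to a subsequence along which Theorem \ref{the:main} applies, followed by Assumption \ref{ass:liminf} and the definition of $\overline{\A}$. Your version is slightly more careful in two places where the paper compresses — you explicitly check that the switching endpoint $u_\eps(0,\cdot)\equiv -1$ forces $\Linit=0$ so that Assumption \ref{ass:main} and hence Theorem \ref{the:main} are applicable, and you name Attouch's diagonalisation lemma where the paper merely says "a suitable diagonal sequence" — but these are organisational refinements, not a different route.
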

\begin{proof}
We first prove the \emph{limsup}-estimate for
$\A_\eps,\overline{\A}$. In fact, fix an arbitrary $u\in 
L^1(\Omega_T,\{-1,1\})$ with $\overline{\A}(u)<\infty$. We 
deduce that there exists a sequence $(u_k)_{k\in\N}$ as in
\eqref{eq:def-relaxed} such that 
\begin{align}
  \overline{\A}(u)\,&=\, \lim_{k\to \infty} \AK(u_k). \label{eq:lim-AK}
\end{align}
By \eqref{eq:limsup-AK} for all $k\in\N$ there exists a sequence
$(u_{\eps,k})_{\eps>0}$ such that 
\begin{gather*}
  \lim_{\eps\to 0}u_{\eps,k}\,=\, u_k,\qquad
  \AK(u_k)\,\geq\, \limsup_{\eps\to
  0}\A_\eps(u_{\eps,k}).
\end{gather*}
Choosing a suitable diagonal sequence $u_{\eps(k),k}$ we
deduce that
\begin{gather}
  \overline{\A}(u)\,\geq\, \lim_{k\to \infty} \A_{\eps(k)}(u_{\eps(k),k}),
  \label{eq:limsup} 
\end{gather}
which proves the \emph{limsup}-estimate.

We next prove the \emph{liminf}-estimate. 
Consider an arbitrary sequence $(u_\eps)_{\eps>0}$ that satisfies the
Assumption \ref{ass:switch}. By Theorem \ref{the:main} there exists $u\in
BV(\Omega_T,\{-1,1\})$ and a measure $\mu$ on $\Omega_T$ such that
\begin{gather}
  u_\eps\,\to\, u\quad\text{ in }L^1(\Omega_T),\qquad
  \mu_\eps\,\to\,\mu
\end{gather}
for a subsequence $\eps\to 0$, and such that
\begin{gather}
  \liminf_{\eps\to 0}\A_\eps(u_\eps)\,\geq\, \A(\mu). \label{eq:fast}
\end{gather}
By Assumption \ref{ass:liminf} there exists a sequence
$(u_k)_{k\in\N}\subset\mathcal{M}$ such that \eqref{eq:conv-S-u-approx}
holds. By 
\eqref{eq:fast} and the definition of $\overline{\A}$ this yields that 
\begin{gather}
  \liminf_{\eps\to 0}\A_\eps(u_\eps)\,\geq\, \A(\mu)\,\geq\, \lim_{k\to\infty}
  \AK(u_k)\,\geq\, \overline{\A}(u)
\end{gather}
and proves the \emph{liminf}-estimate.
\end{proof}
%=====================================================
% appendix
%=====================================================
\begin{appendix}
\section{Rectifiable measures and weak mean curvature}
\label{app:rect}
We briefly summarize some definitions from Geometric Measure Theory.
We always restrict ourselves to the
hypersurface case, that is `tangential-plane' and `rectifiability' of a
measure in $\R^d$ means `$(d-1)$-dimensional tangential-plane' and
`$(d-1)$-rectifiable'. 
\begin{definition}
\label{def:varifolds}
Let $\mu$ be a Radon-measure in $\R^d$, $d\in\N$.
\begin{enumerate}
\item
We say that $\mu$ has a \emph{(generalized) tangent
plane} in $z\in\R^d$ if there exist a number $\Theta>0$ and a
$(d-1)$-dimensional linear subspace $T\subset\R^d$ such that  
\begin{gather}
 \lim_{r\searrow 0}
 r^{-d+1}\int\eta\left(\frac{y-z}{r}\right)\,d\mu(y)\,=\,\Theta
 \int_{T}\eta\,d\Ha^{d-1}, \qquad 
 \text{ for every } \eta\in C^0_c(\R^d). \label{eq:def-Tan}
\end{gather}
We then set $T_z\mu\,:=\, T$ and call $\Theta$ the multiplicity of
$\mu$ in $z$.
\item
If for $\mu$-almost all $z\in\R^d$ a tangential plane exists then we
call $\mu$ \emph{rectifiable}. If in addition the multiplicity is
integer-valued $\mu$-almost everywhere we say that $\mu$ is
\emph{integer-rectifiable}.
\item
The \emph{first variation} $\delta\mu: C^1_c(\R^d,\R^d)$ of a
rectifiable Radon-measure $\mu$ on $\R^d$ is defined by
\begin{gather*}
  \delta\mu(\eta)\,:=\, \int \dive_{T_z\mu}\eta\,d\mu.
\end{gather*}
If there exists a function ${H}\in L^1_\loc(\mu)$ such that
\begin{gather*}
  \delta\mu(\eta)\,=\, -\int {H}\cdot\eta\,d\mu
\end{gather*}
we call ${H}$ the weak mean-curvature vector of $\mu$.
\end{enumerate}
\end{definition}
%
%========================================
% hutchinson stuff
%========================================
\section{Measure-function pairs}
\label{app:hutch}
We recall
some basic facts about the notion of \emph{measure function pairs}
introduced by Hutchinson in \cite{Hu}. 
\begin{definition}\label{def:meas-funct-pair}
Let $E\subset\R^d$ be an open subset. Let $\mu$ be a positive Radon-measure
on $E$. Suppose $f:E\,\to\,\R^m$ is well defined $\mu$-almost everywhere, and
$f\in L^1(\mu,\R^m)$. Then we say $(\mu, f)$ is a measure-function pair over $E$ (with 
values in $\R^m$).
\end{definition}
Next we define two notions of convergence for a sequence of measure-function pairs
on $E$ with values in $\R^m$.
\begin{definition}\label{def:conv-meas-funct-pairs}
Suppose $\{(\mu_k,\,f_k)\}_k$ and $(\mu,\, f)$ are measure-function pairs over $E$ with values in
$\R^m$. Suppose 
\begin{equation*}
 \lim_{k\to\infty}\mu_k=\,\mu,
\end{equation*} 
as Radon-measures on $E$. Then we say $(\mu_k,\,f_k)$ converges to $(\mu,\,f)$ in the weak sense 
(in $E$) and write
\begin{equation*}
 (\mu_k,\, f_k)\to\,(\mu,\, f),
\end{equation*}
if $\mu_k \lfloor f_k\to\,\mu\lfloor f$ in the sense of vector-valued
measures, that means
\begin{equation*}
 \lim_{k\to\infty}\int f_k\cdot\eta\,d\mu_k=\int f\cdot \eta\,d\mu,
\end{equation*}
for all $\eta\in C^0_c(E,\R^m)$.
\end{definition}
The following result is a slightly less general version of \cite[Theorem 4.4.2]{Hu},
however this is enough for our aims.
\begin{theorem}\label{the:hutch}
Let $F:\R^m\,\to\,[0,+\infty)$ be a continuous, convex function with super-linear growth
at infinity, that is:
\begin{equation*}
\lim_{\vert y\vert\to\infty}\frac{F(y)}{\vert y\vert}=+\infty.
\end{equation*}
Suppose $\{(\mu_k,\, f_k)\}_k$ are measure-function pairs over $E\subset \R^d$ with values
in $\R^m$. Suppose $\mu$ is Radon-measure on $E$ and $\mu_k\to\mu$ as $k\to\infty$.
Then the following are true:
\begin{enumerate}
\item 
 if
\begin{equation}
 \sup_{k}\int \int F(f_k)\,d\mu_k <+\infty,
\label{eq:ener-bound-Hu} 
\end{equation}
then some subsequence of $\{(\mu_k,\, f_k)\}$ converges in the weak sense to some
measure function $(\mu,\,f)$ for some $f$.
\item
 if \eqref{eq:ener-bound-Hu} holds and $(\mu_k,\, f_k)\to(\mu,\, f)$
 then
\begin{equation}
 \liminf_{k\to\infty}\int F(f_k)\,d\mu_k\geq\int F(f)\,d\mu.
\label{eq:lsc-Hu} 
\end{equation} 
\end{enumerate}
\end{theorem}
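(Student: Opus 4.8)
\emph{Proof proposal.} The plan is to treat the two assertions in turn, with the superlinear growth of $F$ doing the essential work through a de la Vall\'ee--Poussin type estimate. The starting point is the elementary consequence of superlinearity that for every $M>0$ there is $\lambda_M>0$ with $F(y)\geq M|y|$ whenever $|y|\geq\lambda_M$, hence for every Borel set $A\subset E$
\[
  \int_A|f_k|\,d\mu_k\,\leq\,\lambda_M\,\mu_k(A)\,+\,\frac1M\int_A F(f_k)\,d\mu_k .
\]
Combined with the energy bound \eqref{eq:ener-bound-Hu} and with the boundedness of $\mu_k(E')$ on relatively compact $E'\subset E$ (a consequence of $\mu_k\to\mu$), this yields a \emph{locally uniform bound on the total variation} of the $\R^m$-valued measures $\sigma_k:=\mu_k\lfloor f_k$ and, more importantly, \emph{locally uniform integrability}: $\sup_k\int_A|f_k|\,d\mu_k$ is small as soon as $\mu(\overline A)$ is small (take $M$ large, then $A$ small).

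For part~(1) I would extract, by weak-$*$ compactness of $\R^m$-valued Radon measures, a subsequence along which $\sigma_k\to\sigma$ as Radon measures on $E$. Lower semicontinuity of the total variation on open sets together with the estimate above gives $|\sigma|(A)\leq\lambda_M\,\mu(\overline A)+M^{-1}\sup_k\int F(f_k)\,d\mu_k$ for all relatively compact open $A$; letting first $\mu(\overline A)\to 0$ (outer regularity of $\mu$) and then $M\to\infty$ shows $|\sigma|\ll\mu$. The Radon--Nikodym theorem then produces $f\in L^1_{\loc}(\mu,\R^m)$ with $\sigma=\mu\lfloor f$, i.e.\ $(\mu_k,f_k)\to(\mu,f)$ along the subsequence; that $f\in L^1(\mu)$ follows a posteriori from part~(2), applied to this very convergent sequence, together once more with the superlinearity of $F$.

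For part~(2) the plan is to use that a nonnegative convex function with superlinear growth is the supremum of a \emph{countable} family of affine functions, $F(y)=\sup_{j\in\N}(a_j\cdot y+b_j)$. Given pairwise disjoint relatively compact open sets $A_1,\dots,A_N\subset E$, indices $j_1,\dots,j_N$, and cut-offs $\varphi_i\in C^0_c(A_i,[0,1])$, one has $\sum_i\int\varphi_i(a_{j_i}\cdot f_k+b_{j_i})\,d\mu_k\leq\int_E F(f_k)\,d\mu_k$, since $F\geq 0$ and $\sum_i\varphi_i\leq 1$. Passing to the limit using $(\mu_k,f_k)\to(\mu,f)$ with test field $\varphi_i a_{j_i}\in C^0_c(E,\R^m)$ and using $\mu_k\to\mu$, then letting $\varphi_i\nearrow\mathbf 1_{A_i}$ by dominated convergence, yields $\sum_i\int_{A_i}(a_{j_i}\cdot f+b_{j_i})\,d\mu\leq\liminf_k\int_E F(f_k)\,d\mu_k$. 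It then remains to take the supremum over the $j_i$ and to refine the partition so that the left-hand side approaches $\int_E F(f)\,d\mu$; this uses a measurable partition of $E$ subordinate to the sets $\{\,a_j\cdot f+b_j>F(f)-\delta\,\}$ and the inner regularity of $\mu$ (to replace Borel pieces by compact, hence by slightly larger open, sets). As an alternative I would keep in reserve the Young-measure route: lift to $\rho_k:=(\mathrm{id},f_k)_\#\mu_k$ on $E\times\R^m$, use superlinearity for tightness in the $y$-variable so that $\rho_k\rightharpoonup\rho$ with no loss of mass, identify the first marginal of $\rho$ as $\mu$ and its barycenter as $f$, and conclude by Jensen's inequality applied to the disintegration together with $\int F\,d\rho\leq\liminf_k\int F\,d\rho_k$ (portmanteau, $F$ continuous and nonnegative).

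The main obstacle is precisely the last step of part~(2): passing from the affine lower bounds, which are naturally tested only against continuous functions on \emph{open} sets, to the full functional $\int_E F(f)\,d\mu$, which requires a Borel partition adapted to $f$. Reconciling these through the regularity of $\mu$ --- and, in the Young-measure variant, ruling out escape of mass to $y=\infty$ --- is where the superlinear growth hypothesis is indispensable and where the bookkeeping is most delicate; everything else is soft functional analysis and measure theory.
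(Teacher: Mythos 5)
The paper does not prove Theorem \ref{the:hutch}; it states it, with attribution, as a specialization of \cite[Theorem 4.4.2]{Hu} and cites it without proof, so there is no in-paper argument to compare against. Assessed on its own, your outline is correct and is essentially the standard route. For part (1), the de la Vall\'ee--Poussin bound $\int_A|f_k|\,d\mu_k\le\lambda_M\mu_k(A)+M^{-1}\int F(f_k)\,d\mu_k$ gives locally uniform total-variation bounds on $\sigma_k:=f_k\mu_k$, hence weak-$*$ compactness along a subsequence; lower semicontinuity of total variation on open sets and $\limsup_k\mu_k(\overline A)\le\mu(\overline A)$ (valid for $\overline A\subset E$ compact) give $|\sigma|\ll\mu$, and Radon--Nikodym produces $f$. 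One precision you should add: outer regularity controls $\mu(A)$ for open $A\supset B$, not $\mu(\overline A)$; interpose an open $A'$ with $B\subset A'\subset\overline{A'}\subset A$ (possible after reducing to compact $B$ by inner regularity of $|\sigma|$). For part (2), the countable affine representation $F(y)=\sup_j(a_j\cdot y+b_j)$, the pointwise bound $\sum_i\varphi_i\big(a_{j_i}\cdot y+b_{j_i}\big)\le F(y)$ (correct because the $\varphi_i$ have disjoint supports, take values in $[0,1]$, and $F\ge0$), the passage to the limit testing $\varphi_ia_{j_i}\in C^0_c$, and the final approximation of $\int_E F(f)\,d\mu$ via a Borel partition subordinate to $\{a_j\cdot f+b_j>F(f)-\delta\}$ plus inner/outer regularity is exactly the right bookkeeping; the Young-measure alternative you keep in reserve is also a clean route.

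One caveat concerns the statement rather than your argument: the paper's Definition \ref{def:meas-funct-pair} demands $f\in L^1(\mu)$, whereas what you (and the source) can obtain in general is $f\in L^1_\loc(\mu)$. If $\mu(E)=\infty$ and $F(0)=0$, finiteness of $\int F(f)\,d\mu$ does not force $\int|f|\,d\mu<\infty$, so the a posteriori appeal to part (2) for global integrability of $f$ needs an extra finiteness hypothesis (e.g.\ $\sup_k\mu_k(E)<\infty$). In the paper's applications $\Omega_T$ is bounded and the relevant masses are uniformly controlled, so this gap is harmless in context, but as written the deduction $f\in L^1(\mu)$ in part (1) is not fully justified.
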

\end{appendix}
%\end{comment}
%=======================
% bibliography
%=======================

%
\def\cprime{$'$}

\end{document}